\newtheorem{thm}{Theorem}[section]
\newtheorem{proposition}[thm]{Proposition}
\newtheorem{theorem}[thm]{Theorem}
\newtheorem{lemma}[thm]{Lemma}
\theoremstyle{definition}
\newtheorem{definition}[thm]{Definition}
\newtheorem{example}[thm]{Example}
\theoremstyle{remark}
\newtheorem{remark}[thm]{Remark}
\numberwithin{equation}{section}
\newcommand\Real{{\mathfrak R}{\mathfrak e}\,} %
\newcommand\Imag{{\mathfrak I}{\mathfrak m}\,} %
\newcommand{\R}{\mathbb{{R}}}
\newcommand{\N}{\mathbb{{N}}}
\newcommand{\C}{\mathbb{{C}}}
\newcommand{\D}{\mathbb{{D}}}
\begin{document}

\title[Generalized Ces\`aro operators on Sobolev-Lebesgue sequence spaces]
{Generalized Ces\`aro operators, fractional finite differences and Gamma functions }

\author[Luciano Abadias]{Luciano Abadias}
\address[L. Abadias]{Centro Universitario de la Defensa, Instituto Universitario de Matem\'aticas y Aplicaciones, 50090 Zaragoza, Spain.}
\email{labadias@unizar.es}

\author{Pedro J. Miana}
\address[P. J. Miana]{Departamento de Matem\'aticas, Instituto Universitario de Matem\'aticas y Aplicaciones, Universidad de Zaragoza, 50009 Zaragoza, Spain.}
\email{pjmiana@unizar.es}

\thanks{Authors  have been partially supported by Project MTM2016-77710-P, DGI-FEDER, of the MCYTS and Project E-64, D.G. Arag\'on, Universidad de Zaragoza, Spain.}

\subjclass[2010]{47B38, 40G05, 47A10, 33B15}

\keywords{Ces\`{a}ro operators, Gamma functions, Ces\`{a}ro numbers, Lebesgue sequence spaces, spectrum set}


\begin{abstract} In this paper, we present a complete spectral research of generalized Ces\`aro operators on Sobolev-Lebesgue sequence spaces. The main idea is to subordinate  such operators to suitable $C_0$-semigroups on these sequence spaces. We introduce that family of sequence spaces using  the fractional finite differences and we prove some structural properties similar to classical Lebesgue sequence spaces. In order to show the main results  about fractional finite differences, we state equalities involving sums of quotients of  Euler's Gamma functions. Finally, we display some graphical representations of the spectra of generalized Ces\`aro operators.

\end{abstract}

\maketitle

\section*{Introduction}\label{intro}

Given a sequence  $f=(f(n))_{n=0}^\infty$ of complex numbers, we consider its sequence of averages
$$
{\mathcal C}f(n):={1\over n+1}\sum_{j=0}^n f(j), \qquad n \in \N\cup\{0\},
$$
often called Ces\`{a}ro means. They were introduced by E. Ces\`{a}ro in 1890, see for example \cite{Cesaro}, and they have several applications, e.g. to (originally) the multiplication of series, in the theory of Fourier series or in  asymptotic analysis. Moreover, Ces\`{a}ro means of natural order $m$, ${\mathcal C}_mf$, $({\mathcal C}_1={\mathcal C})$, where
$$
{\mathcal C}_mf(n):={m (n!)\over (n+m)!}\sum_{j=0}^n {(n-j+m-1)!\over (n-j)!} f(j), \qquad n \in \N\cup\{0\},
$$
were also introduced in \cite{Cesaro}.

In 1965 the (discrete) Ces\`{a}ro operator, $f \mapsto {\mathcal C}(f)$ from $\ell^2$ to $\ell^2,$  was originally introduced in \cite{Brown}. Authors showed that $ {\mathcal C}$ is a bounded operator, $\Vert {\mathcal C}\Vert=2$,  and the spectrum of  $ {\mathcal C}$ is the closed disc $\{\lambda\,:\vert \lambda-1\vert \le 1\}$ (\cite[Theorem 2]{Brown}). In \cite{Leibo}, authors proved that $\mathcal{C}$ is a bounded linear operator from $\ell^p$ into itself  and  its spectrum is the closed ball centered in $q/2$ and radius $q/2,$ for $1< p\leq \infty$ and ${1\over p}+{1\over q}=1$.

The boundedness of the Ces\`{a}ro operator ${\mathcal C}$  from $\ell^p$ into itself is a straightforward consequence of the well-known Hardy inequality, see a nice survey about this inequality in \cite{KMP}. The following generalized Hardy inequality, see \cite[Theorem 3.18, p.227]{HLP}, allows to show the boundedness of Ces\`{a}ro operators ${\mathcal C}_m$   from $\ell^p$ into itself: Let $\beta>0$ and $1<p<\infty,$ then \begin{equation*}\label{hardy}
\biggl(\sum_{n=1}^{\infty}\left|\frac{1}{n^{\beta}} \sum_{j=0}^n (n-j)^{\beta-1}f(j)\right|^p\biggr)^{1/p}\leq \frac{\Gamma(\beta)\Gamma(1-1/p)}{\Gamma(\beta+1-1/p)}\biggl(\sum_{n=1}^{\infty}|f(n)|^p\biggr)^{1/p},\quad f\in\ell^p.
\end{equation*}
Also, the dual inequality holds, \begin{equation}\label{hardyDual}
\biggl(\sum_{n=1}^{\infty}\left| \sum_{j=n}^{\infty}\frac{(j-n)^{\beta-1}}{j^{\beta}} f(j)\right|^p\biggr)^{1/p}\leq \frac{\Gamma(\beta)\Gamma(1/p)}{\Gamma(\beta+1/p)}\biggl(\sum_{n=1}^{\infty}|f(n)|^p\biggr)^{1/p},\quad f\in\ell^p.
\end{equation}




For any complex number $\alpha,$ we denote by \begin{equation*}\label{definition} k^{\alpha}(n):=\frac{\alpha(\alpha+1)\cdots(\alpha+n-1)}{n!}\quad \text{for }n\in\N,\, k^{\alpha}(0):=1,\end{equation*} the known Ces\`aro numbers which are studied deeply in \cite[Vol. I, p.77]{Zygmund} and denoted by $A_{n}^{\alpha-1}$. The kernels $k^{\alpha}$ have played a key role in results about operator theory and fractional difference equations, see \cite{A, ADT, ALMV, Lizama}. Note that the sequence $k^\alpha$ can be written as $$k^{\alpha}(n)=\frac{\Gamma(n+\alpha)}{\Gamma(\alpha)\Gamma(n+1)}={n+\alpha-1\choose \alpha-1}=(-1)^n\binom{-\alpha}{n},\qquad n\in\N_{0},\,\alpha\in \C\backslash\{0,-1,-2,\ldots\},$$
where $\Gamma$ is the  Euler Gamma function. Also, the kernel $(k^{\alpha}(n))_{n\in\N_0}$ could be defined by the generating function, that is, \begin{equation}\label{generating}
\displaystyle\sum_{n=0}^{\infty}k^{\alpha}(n)z^n=\frac{1}{(1-z)^{\alpha}},\quad |z|<1.
 \end{equation} Therefore, these kernels satisfy the semigroup property, $k^{\alpha}*k^{\beta}=k^{\alpha+\beta}$ for $\alpha, \beta \in \C$.

In this paper, we study the generalized Ces\`{a}ro operators of order $\beta$ (in some paper called $\beta$-Ces\`{a}ro operators, \cite{Stevik, Xiao}) and its dual operator of order $\beta$,  $\mathcal{C}_{\beta} $ and $\mathcal{C}^*_{\beta}$ respectively, given by \begin{eqnarray*}
\mathcal{C}_{\beta}f(n)&:=&\frac{1}{k^{\beta+1}(n)}\sum_{j=0}^nk^{\beta}(n-j)f(j), \cr \mathcal{C}^*_{\beta}f(n)&:=&\sum_{j=n}^{\infty}\frac{1}{k^{\beta+1}(j)}k^{\beta}(j-n)f(j),
\end{eqnarray*}
for $n\in \N_0$ and $\Real\beta >0,$ acting on a new family of Sobolev-Lebesgue  spaces $\tau_p^\alpha,$ for $\alpha\geq 0,$ introduced in section \ref{sect2}. These families of sequence space  include the classical Lebesgue spaces $\ell^p $ for $\alpha=0$ and $1\le p\le \infty$, i.e, $\ell^p =\tau_p^0$. The Banach algebras $\tau_1^\alpha$ were introduced in \cite{ALMV} to define bounded algebra homomorphisms linked to $(C, \alpha)$-bounded operators. In \cite[Theorem 3.1 and Remark 3.3]{Stempak}, it is shown that the generalized Ces\`{a}ro operators $\mathcal{C}_{\beta} $ and $\mathcal{C}^*_{\beta}$ are bounded on $\ell^p$ for $1<p<\infty$; $p=\infty$ for $\mathcal{C}_{\beta} $ and $p=1$ for  $\mathcal{C}^*_{\beta}$.

The main idea of the paper is to represent the operators $\mathcal{C}_{\beta} $ and $\mathcal{C}^*_{\beta}$ via $C_0$-semigroups of operators. In particular, we consider the $C_0$-semigroups $(T_p(t))_{t\geq 0}$ and $(S_p(t))_{t\geq 0}$ acting on $\tau^{\alpha}_p$ (section \ref{sect5}), given by \begin{eqnarray*}
T_p(t)f(n)&:=&e^{-\frac{t}{p}}\displaystyle\sum_{j=0}^n\binom{n}{j}e^{-tj}(1-e^{-t})^{n-j}f(j),\cr S_p(t)f(n)&:=&e^{-t(1-\frac{1}{p}+n)}\displaystyle\sum_{j=n}^{\infty}\binom{j}{n}(1-e^{-t})^{j-n}f(j),
\end{eqnarray*}
for   $n\in\N_0$ and $t\geq 0,$  and we will get
 \begin{eqnarray*}
 \mathcal{C}_{\beta}f(n)&=&\displaystyle\beta\int_0^{\infty}(1-e^{-t})^{\beta-1}e^{-t(1-\frac{1}{p})}T_p(t)f(n)\,dt,\quad n\in\N_0,\, 1<p\leq\infty,\cr
 \mathcal{C}^*_{\beta}f(n)&=&\beta\int_0^{\infty}(1-e^{-t})^{\beta-1}e^{-\frac{t}{p}}S_p(t)f(n)\,dt,\quad n\in\N_0,\,\,\, 1\leq p <\infty,
\end{eqnarray*}
for $\Real\beta >0$, see Theorem \ref{bounded}. These two equalities give additional information about the connection between one-parameter semigroups and the operator $\mathcal{C}_{\beta},$ a question posed in \cite[Section 3]{Stempak}.

The powerful theory of $C_0$-semigroups of operators allows us to get bounds for norms of operators (see proof of Theorem \ref{bounded}), and  to describe the spectra of $\mathcal{C}_{\beta} $ and $\mathcal{C}^*_{\beta}$ in Theorem \ref{spectro} via a spectral mapping theorem for sectorial operators and functions which belong to the extended Dunford-Riesz class, see \cite[Chapter 2]{Haase}. Note that these $C_0$-semigroups, $(T_p(t))_{t\geq 0}$ and $(S_p(t))_{t\geq 0},$ are not holomorphic (see Remark \ref{holomor}), and the classical spectral mapping theorem for holomorphic semigroups is  not applicable. In the last section, we present some pictures of the spectrum of $\mathcal{C}_{\beta}$ for some particular $\beta$ complex numbers.

The  norms $\|\quad\|_{\alpha,p}$ on the Sobolev-Lebesgue spaces $\tau_p^\alpha$ are defined involving fractional finite (or Weyl) differences $W^\alpha$ and the kernels $(k^\alpha (n))_{n\in \N_0}$, i.e.,
$$\|f\|_{\alpha,p}:=\left(\sum_{n=0}^\infty |W^\alpha f(n)|^p(k^{\alpha+1}(n))^p\right)^{1/p}, \qquad $$ whenever this expression is finite for $\alpha \ge 0$ and $1\le p\le \infty$, see Definition \ref{taualphasp}. The spaces $\tau_p^\alpha$ are module respect  to the Banach algebras $\tau_1^\alpha$ and  the usual convolution product $\ast$ on sequence spaces (Theorem \ref{propiedades2}). Note that the spaces $\tau_p^\alpha$ form scales on $p$ and on $\alpha$, see Theorem \ref{propiedades}.

We consider the finite difference $W$ defined by  $Wf(n):=f(n)-f(n+1)$ and the iterative finite difference $W^m$ given by $W^mf:=W^{m-1}(Wf)$ for $m\in \N$. For $\alpha \in \R\backslash\N,$ the Weyl difference $W^\alpha$ allows to obtain fractional finite difference  of order  $\alpha >0$ and
$$W^{\alpha}f(n)=\displaystyle\sum_{j=n}^{\infty} k^{-\alpha}(j-n)f(j),\qquad n\in\N_0,$$ whenever both expressions  make sense.
This formula shows  how  deep the connection between $W^\alpha$ and kernels $k^\alpha$ is, see section \ref{Weyl} and Theorem \ref{quivalent}.

Different theories of fractional differences have been developed in recent years with interesting applications to boundary value problems and concrete models coming from biological problems, see for example \cite{AtSe10} and \cite{Go12}. Note that the Weyl fractional difference $W^\alpha$ coincides or is closed to other fractional sums presented in \cite[Section 1]{AtEl09} or \cite[Theorem 2.5]{Lizama1}.

We also give the following formulae which show  the natural behavior between $W^\alpha$ and the $C_0$-semigroups $(T_p(t))_{t\geq 0}$ and $(S_p(t))_{t\geq 0},$
  \begin{eqnarray*}
  W^{\alpha}T_p(t)f(n)&=&e^{-t\alpha}T_p(t)W^{\alpha}f(n), \cr
  k^{\alpha+1}(n)W^{\alpha}S_p(t)f(n)&=&S_p(t)(k^{\alpha +1}W^{\alpha}f)(n),
  \end{eqnarray*}
for $\alpha>0$, $t\geq 0,$ and $n\in\N_0,$ as consequences of  Lemmata \ref{WeylSem} and \ref{WeylSemT}. To obtain these technical lemmata, fine calculations which involve sums of kernels $k^\alpha$ (see Theorem \ref{llave}) or quotients of Gamma functions are needed. In particular, in Theorem  \ref{funda}, we show the following nice identity which seems to be unknown until now,
$$
{\Gamma(m+r+1)\over \Gamma(v+m+r) }\sum_{l=0}^m{\Gamma(l-\alpha)\over \Gamma(l+1)}{\Gamma(v+\alpha+r+m-l)\over \Gamma(r+1+m-l)}=
 {\Gamma(\alpha+r+v)\over \Gamma(r) }\sum_{l=0}^m{\Gamma(l-\alpha)\over \Gamma(l+1)}{\Gamma(l+r)\over \Gamma(v+l+r)}.
$$
for $v,r>0$, $\alpha\in \R_+\backslash\{0,1,2,\ldots\},$ and $m\in \N\cup\{0\}$.

Ces\`{a}ro operators and generalized Ces\`{a}ro operators have been treated in several spaces and with different techniques. In the bibliography, we present only a small part of this literature. Applications of $C_0$-semigroups of operators in the study of Ces\`{a}ro operators was initialed  by C.C. Cowen in \cite{Cowen}. Ces\`{a}ro operators on the Hardy space $H^p$ defined on the unit disc are considered in \cite{PLMSSis} and on the Bergman space $A^p(\D)$ in \cite{ArchivSis}.  However, as K. Stempak pointed out in \cite[Section 3]{Stempak}, the generalized Ces\`{a}ro operator is natural to consider (or even  more natural than on the Hardy spaces $H^p$) on  the sequence spaces $\ell^p$, and it could be interesting to study spectral properties of ${\mathcal C}_\beta$ from the point of view of operator theory and semigroups. Our research completes several  previous  known results which have been obtained on the unit disc, see Remarks \ref{compos} and \ref{holo}.

Recently, in \cite{arvanti}, Ces\`{a}ro operators and semigroups on the $H^p$ space defined on the half-plane are treated, and also on the Lebesgue-Sobolev spaces defined on the half-line $\R_+$ and on the whole line $\R,$ see \cite{LMPL}. The continuous case presented in \cite{LMPL} is easier than the discrete case, which we present in this manuscript. Although the philosophy of both papers is parallel, many more technical difficulties  arise in sequence spaces, see  for example sections \ref{sect0} and \ref{sect1}, and Theorem \ref{spectro}.

\medskip
\noindent{\bf Notation.} We write as $\N$ the set of natural numbers and $\N_0:=\N \cup\{0\}$; $\R$ is the set of real numbers and $\R_+$ is the set of non-negative real numbers; $\C$ is the set of complex numbers, $\C_+$ and $\C_-$ the set of complex numbers with positive and negative real part respectively and $\D:=\{z\in \C\,\,:\,\,\vert z\vert <1\}.$

The sequence Lebesgue space $\ell^p$ is the set of complex sequences $f=(f(n))_{n\in\N_0}$ such that
$$\displaystyle\Vert f\Vert_p:=\left(\sum_{n=0}^{\infty}\lvert f(n)\rvert^p\right)^{1\over p}<\infty,$$ for $1\le p<\infty,$
$\ell^\infty$ the set of bounded complex sequence with the suprem norm and $c_{0,0}$ the set of vector-valued sequences with finite support. The usual convolution product $\ast$ of sequences $f=(f(n))_{n\in\N_0}$ and $g=(g(n))_{n\in\N_0}$   is defined by
$$
f\ast g(n)=\sum_{j=0}^n f(n-j) g(j), \qquad n\in \N_0.
$$
Note that $(\ell^1, \ast)$ is a Banach algebra and $(\ell^p, \ast)$ is a Banach module of $\ell^1$ for $1<p\le \infty$.

We denote by $\Gamma$ and $\bf{B}$ the usual  Euler  Gamma  and Beta functions given by
 \begin{eqnarray}\label{beta}
  \Gamma (z)&=&\int_0^\infty e^{-t}t^{z-1}dt, \quad \Real z>0,\cr
 {\bf B}(u,v)&=&{\Gamma(u)\Gamma(v)\over\Gamma(u+v)}= \int_0^\infty e^{-tv}(1-e^{-t})^{u-1}dt, \qquad \Real u, \Real v>0. \label{beta}
 \end{eqnarray}
and $\Gamma$ is extended to $\C \backslash \{0,-1,-2,\ldots\}$ using the identity $\Gamma(z+1)=z\Gamma(z)$.

\section{Sums of quotients of Gamma functions}\label{sect0}

In this first section, we display technical lemmas involving Euler's Gamma functions. They are used in some main results along the paper and they are interesting by themselves.

\begin{lemma} \label{fits} For $v, u>0$, and $\alpha\in \R_+\backslash\{0,1,2,\ldots\},$ the following equality holds
$$
\sum_{l=0}^\infty{\Gamma(l-\alpha)\over \Gamma(l+1)}{\Gamma(u+l+1)\over \Gamma(v+u+l+1)}= {\Gamma(u+1)\Gamma(v+\alpha)\Gamma(-\alpha)\over \Gamma(v)\Gamma(u+\alpha+v+1)}.
$$

\end{lemma}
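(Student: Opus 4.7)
The plan is to convert the ratio of Gamma functions in the summand into a Beta-type integral, swap the sum with the integral, and then identify the resulting power series as the generating function of the Ces\`aro numbers $k^{-\alpha}$, ending with a Beta evaluation.

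First, I would apply the identity \eqref{beta} (in the form ${\bf B}(a,b) = \int_0^1 s^{a-1}(1-s)^{b-1}ds = \Gamma(a)\Gamma(b)/\Gamma(a+b)$) to write
\[
{\Gamma(u+l+1)\over \Gamma(v+u+l+1)} = {1\over \Gamma(v)}\int_0^1 s^{v-1}(1-s)^{u+l}\,ds,
\]
which is valid since $v,u>0$. Substituting into the left-hand side and formally interchanging the order of summation and integration yields
\[
\sum_{l=0}^\infty {\Gamma(l-\alpha)\over \Gamma(l+1)}{\Gamma(u+l+1)\over \Gamma(v+u+l+1)} = {1\over \Gamma(v)}\int_0^1 s^{v-1}(1-s)^u \sum_{l=0}^\infty {\Gamma(l-\alpha)\over \Gamma(l+1)}(1-s)^l\,ds.
\]

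Next, since $\alpha\in\R_+\setminus\N_0$, the number $\Gamma(-\alpha)$ is a well-defined nonzero real number, and by definition $k^{-\alpha}(l) = \Gamma(l-\alpha)/(\Gamma(-\alpha)\Gamma(l+1))$. The generating function \eqref{generating} applied with parameter $-\alpha$ then gives, for $|z|<1$,
\[
\sum_{l=0}^\infty {\Gamma(l-\alpha)\over \Gamma(l+1)} z^l = \Gamma(-\alpha)\sum_{l=0}^\infty k^{-\alpha}(l) z^l = \Gamma(-\alpha)(1-z)^\alpha.
\]
Setting $z=1-s$ with $s\in(0,1)$, the inner series collapses to $\Gamma(-\alpha) s^\alpha$, and substituting back I would obtain
\[
{\Gamma(-\alpha)\over \Gamma(v)}\int_0^1 s^{v+\alpha-1}(1-s)^u\,ds = {\Gamma(-\alpha)\over \Gamma(v)}\,{\bf B}(v+\alpha,u+1) = {\Gamma(-\alpha)\Gamma(v+\alpha)\Gamma(u+1)\over \Gamma(v)\Gamma(u+\alpha+v+1)},
\]
which is the right-hand side.

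The only delicate point, and the step I would write out most carefully, is justifying the interchange of sum and integral. Since $\alpha>0$, Stirling's formula gives ${\Gamma(l-\alpha)/\Gamma(l+1)} = O(l^{-\alpha-1})$ as $l\to\infty$, so the series defining $\sum_l \Gamma(l-\alpha)(1-s)^l/\Gamma(l+1)$ converges absolutely and uniformly on $[0,1-\delta]$ for each $\delta>0$; combined with the integrable majorant $s^{v-1}(1-s)^u\cdot C\sum_l l^{-\alpha-1}(1-s)^l$, which is finite because $\int_0^1 s^{v-1}(1-s)^{u+l}ds = {\bf B}(v,u+l+1)\sim \Gamma(v)l^{-v}$, Fubini--Tonelli applies. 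Beyond that, everything reduces to the generating-function identity \eqref{generating} and two applications of the Beta integral, so no combinatorial obstruction arises.
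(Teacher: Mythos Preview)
Your proof is correct and follows essentially the same route as the paper: write the Gamma ratio as a Beta integral, interchange sum and integral, collapse the series via the generating function \eqref{generating} for $k^{-\alpha}$, and evaluate the resulting Beta integral. The only differences are cosmetic (your integration variable $s$ corresponds to the paper's $1-x$) and that you supply an explicit Fubini justification which the paper omits.
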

\begin{proof} By the generating formula (\ref{generating}), we get
\begin{eqnarray*}
&\quad&{\Gamma(v)\over \Gamma(-\alpha)}\sum_{l=0}^\infty{\Gamma(l-\alpha)\over \Gamma(l+1)}{\Gamma(u+l+1)\over \Gamma(v+u+l+1)}=\sum_{l=0}^\infty k^{-\alpha}(l){\Gamma(v)\Gamma(u+l+1)\over \Gamma(v+u+l+1)}\cr
&\quad&=\displaystyle\sum_{n=0}^{\infty}k^{-\alpha}(l)\int_0^1(1-x)^{v-1}x^{u+l}dx=
\int_0^1(1-x)^{v-1}x^{u}\sum_{n=0}^{\infty}k^{-\alpha}(l)x^ldx\cr
&\quad&=\int_0^1(1-x)^{v+\alpha-1}x^{u}dx={\Gamma(v+\alpha)\Gamma(u+1)\over \Gamma(u+\alpha+v+1)},\cr
\end{eqnarray*}
and the proof is completed.
\end{proof}

\begin{lemma}\label{key}
For $v,r>0$, $\alpha\in \R_+\backslash\{0,1,2,\ldots\},$ and $m\in \N\cup\{0\}$, the following equality holds
$$
\sum_{l=0}^m\left(\alpha r+l(v+\alpha)\right){\Gamma(l-\alpha)\over \Gamma(l+1)}{\Gamma(l+r)\over \Gamma(v+1+l+r)}=-{\Gamma(m+1+r)\over \Gamma(m+1)}
{\Gamma(m+1-\alpha)\over \Gamma(v+m+r+1)}.
$$

\end{lemma}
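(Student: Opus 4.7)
The plan is to prove this identity by recognizing the sum on the left-hand side as a telescoping sum. The shape of the right-hand side, $-\Gamma(m+1+r)\Gamma(m+1-\alpha)/(\Gamma(m+1)\Gamma(v+m+r+1))$, strongly suggests that the correct antiderivative candidate is
$$
b_l:={\Gamma(l+r)\,\Gamma(l-\alpha)\over \Gamma(l)\,\Gamma(v+l+r)}, \qquad l\ge 0,
$$
since then $b_{m+1}$ matches the RHS up to sign. A crucial bonus of this choice is that $b_0=0$, because $1/\Gamma(l)$ vanishes at $l=0$ (reading the value as a limit along non-pole points of the other factors). This should make the boundary term at $l=0$ disappear cleanly.

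The first technical step I would carry out is computing the forward difference $b_{l+1}-b_l$ using the functional equation $\Gamma(z+1)=z\,\Gamma(z)$ on each of the four Gamma factors. After factoring out the common block $\Gamma(l+r)\Gamma(l-\alpha)/[\Gamma(l+1)\Gamma(v+l+r+1)]$, one is reduced to simplifying the bracket $(l+r)(l-\alpha)-l(v+l+r)$. Expanding gives $l^2+lr-\alpha l-\alpha r-lv-l^2-lr=-\bigl(\alpha r+l(v+\alpha)\bigr)$, which is exactly the linear-in-$l$ coefficient that appears in Lemma~\ref{key}. Thus
$$
b_{l+1}-b_l=-\bigl(\alpha r+l(v+\alpha)\bigr)\,{\Gamma(l-\alpha)\over \Gamma(l+1)}\,{\Gamma(l+r)\over \Gamma(v+1+l+r)}.
$$

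With the telescoping identity in hand, summing from $l=0$ to $l=m$ yields
$$
\sum_{l=0}^m\bigl(\alpha r+l(v+\alpha)\bigr){\Gamma(l-\alpha)\over \Gamma(l+1)}{\Gamma(l+r)\over \Gamma(v+1+l+r)}=b_0-b_{m+1}=-b_{m+1},
$$
which is exactly the required formula. The only subtlety to address carefully is the vanishing $b_0=0$; although $\Gamma(-\alpha)$ is finite (since $\alpha\notin\N_0$) and $\Gamma(r),\Gamma(v+r)$ are finite ($r,v>0$), the factor $1/\Gamma(l)$ at $l=0$ is zero by the standard convention that $\Gamma$ has a simple pole at $0$, so $b_0$ is unambiguously $0$.

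I do not expect any serious obstacle: once the antiderivative $b_l$ is guessed (and the right guess is essentially forced by matching the RHS at $l=m+1$), everything reduces to a one-line algebraic manipulation of Gamma-function ratios plus the boundary observation $b_0=0$. The main risk is purely bookkeeping: making sure the shift in the last Gamma factor ($v+l+r+1$ rather than $v+l+r$) is tracked correctly when applying $\Gamma(z+1)=z\Gamma(z)$.
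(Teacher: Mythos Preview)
Your proof is correct. The paper proves Lemma~\ref{key} by induction on $m$, and the inductive step is exactly the telescoping identity $b_{m+2}-b_{m+1}=-\bigl(\alpha r+(m+1)(v+\alpha)\bigr)\frac{\Gamma(m+1-\alpha)}{\Gamma(m+2)}\frac{\Gamma(m+1+r)}{\Gamma(v+m+r+2)}$ that you derive, so the two arguments are the same computation packaged inductively versus as a closed-form antiderivative.
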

\begin{proof} We prove the equality by induction on $m$. For $m=0$, the equality holds trivially. By induction's hypothesis, it is enough to check that
\begin{eqnarray*}
&\,&(\alpha r+(m+1)(v+\alpha){\Gamma(m+1-\alpha)\over \Gamma(m+2)}{\Gamma(m+1+r)\over \Gamma(v+m+2+r)}-{\Gamma(m+1+r)\over \Gamma(m+1)}
{\Gamma(m+1-\alpha)\over \Gamma(v+m+r+1)}\cr
&\,&= {\Gamma(m+1+r)\over \Gamma(m+1)}
{\Gamma(m+1-\alpha)\over \Gamma(v+m+r+1)}\left({(\alpha r+(m+1)(v+\alpha)\over(m+1)(v+m+1+r)}-1\right)\cr
&\,&= -{\Gamma(m+1+r)\over \Gamma(m+1)}
{\Gamma(m+1-\alpha)\over \Gamma(v+m+r+1)}\left({(m+1-\alpha)(m+1+r)\over(m+1)(v+m+1+r)}\right)\cr
&\,&=-{\Gamma(m+2+r)\over \Gamma(m+2)}
{\Gamma(m+2-\alpha)\over \Gamma(v+m+r+2)}
\end{eqnarray*}
and we conclude the proof.
\end{proof}

\begin{theorem}\label{funda} For $v,r>0$, $\alpha\in \R_+\backslash\{0,1,2,\ldots\},$ and $m\in \N\cup\{0\}$, the following equality holds
$$
{\Gamma(m+r+1)\over \Gamma(v+m+r) }\sum_{l=0}^m{\Gamma(l-\alpha)\over \Gamma(l+1)}{\Gamma(v+\alpha+r+m-l)\over \Gamma(r+1+m-l)}=
 {\Gamma(\alpha+r+v)\over \Gamma(r) }\sum_{l=0}^m{\Gamma(l-\alpha)\over \Gamma(l+1)}{\Gamma(l+r)\over \Gamma(v+l+r)}.
$$
\end{theorem}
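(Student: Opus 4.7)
My plan is to prove Theorem \ref{funda} by induction on $m\in\N_0$, mirroring the technique used in the proof of Lemma \ref{key}. Denote the left- and right-hand sides of the claimed identity by $L(m)$ and $R(m)$. The base case $m=0$ is direct: both sums collapse to a single summand at $l=0$, and the elementary Gamma cancellations using $\Gamma(-\alpha)/\Gamma(1)=\Gamma(-\alpha)$ and $\Gamma(r+1)=r\Gamma(r)$ show that both sides equal $\Gamma(-\alpha)\Gamma(v+\alpha+r)/\Gamma(v+r)$.

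For the inductive step, assuming $L(m)=R(m)$, it is enough to verify that $L(m+1)-L(m)$ coincides with the single extra RHS summand
$$R(m+1)-R(m)\,=\,\frac{\Gamma(v+\alpha+r)}{\Gamma(r)}\cdot\frac{\Gamma(m+1-\alpha)\,\Gamma(m+1+r)}{\Gamma(m+2)\,\Gamma(v+m+r+1)}.$$
To compute the LHS difference I would use the elementary recursion
$$\frac{\Gamma(v+\alpha+r+m+1-l)}{\Gamma(r+2+m-l)}\,=\,\frac{v+\alpha+r+m-l}{r+1+m-l}\cdot\frac{\Gamma(v+\alpha+r+m-l)}{\Gamma(r+1+m-l)},$$
isolate the new boundary term at $l=m+1$, and apply the simplification
$$(m+r+1)(v+\alpha+r+m-l)-(v+m+r)(r+1+m-l)\,=\,\alpha(m+r+1)+(v-1)l.$$
After these manipulations, matching $L(m+1)-L(m)$ with $R(m+1)-R(m)$ reduces to the auxiliary identity
$$\sum_{l=0}^m\bigl[\alpha(m+r+1)+(v-1)l\bigr]\frac{\Gamma(l-\alpha)}{\Gamma(l+1)}\cdot\frac{\Gamma(v+\alpha+r+m-l)}{\Gamma(r+2+m-l)}\,=\,-\frac{\Gamma(v+\alpha+r)\,\Gamma(m+1-\alpha)}{\Gamma(r+1)\,\Gamma(m+1)},$$
which has exactly the structure of Lemma \ref{key} but with weights $\alpha(m+r+1)+(v-1)l$ in place of $\alpha r+l(v+\alpha)$.

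This auxiliary identity is the main obstacle, and I plan to establish it by a separate induction on $m$, exactly along the lines used for Lemma \ref{key}. The base case $m=0$ is immediate from $\alpha\Gamma(-\alpha)=-\Gamma(1-\alpha)$; the inductive step proceeds by decomposing the weight $\alpha(m+r+1)+(v-1)l$ into its two pieces (constant and linear in $l$), telescoping each using the identity $l/(r+1+m-l) = (m+r+1)/(r+1+m-l) - 1$, and collecting Gamma quotients after careful bookkeeping. The algebra is tedious but mechanical, and combined with the inductive framework sketched above it yields the desired equality.
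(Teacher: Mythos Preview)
Your reduction is correct: the algebraic simplification
\[
(m+r+1)(v+\alpha+r+m-l)-(v+m+r)(r+1+m-l)=\alpha(m+r+1)+(v-1)l
\]
is right, and the resulting auxiliary identity
\[
\sum_{l=0}^m\bigl[\alpha(m+r+1)+(v-1)l\bigr]\frac{\Gamma(l-\alpha)}{\Gamma(l+1)}\cdot\frac{\Gamma(v+\alpha+r+m-l)}{\Gamma(r+2+m-l)}=-\frac{\Gamma(v+\alpha+r)\,\Gamma(m+1-\alpha)}{\Gamma(r+1)\,\Gamma(m+1)}
\]
is indeed true. The gap is in your plan to prove it ``exactly along the lines used for Lemma~\ref{key}''. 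Lemma~\ref{key} is a one-term induction because its summand does not depend on $m$; your auxiliary sum depends on $m$ through both the weight $\alpha(m+r+1)$ and the convolution index $m-l$, so passing from $m$ to $m+1$ changes every term, and neither the weight decomposition nor the identity $l/(r+1+m-l)=(m+r+1)/(r+1+m-l)-1$ produces a genuine telescope. What actually makes the induction close is the observation that shifting $m\to m+1$ in your auxiliary sum is the same as shifting $r\to r+1$, up to the single boundary term at $l=m+1$; after that the inductive check is a short Gamma computation. This $r$-shift is the key mechanism, and it is not visible in your sketch.

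The paper organizes the proof differently and more economically. Writing $G_{m,r}$ and $F_{m,r}$ for the two sides, it first records the structural recursions $G_{m+1,r}=G_{m,r+1}+g_{m,r}$ (the $r$-shift on the convolution side, which your approach defers to the auxiliary identity) and $F_{m+1,r}=F_{m,r}+f_{m,r}$ (trivial). By the induction hypothesis at level $m$ with parameter $r+1$ one has $G_{m,r+1}=F_{m,r+1}$, so everything reduces to showing
\[
F_{m+1,r}-F_{m+1,r+1}=g_{m,r}-f_{m,r+1}.
\]
The point is that $F$ is \emph{not} a convolution: its summand is $m$-independent, so the difference $F_{m+1,r}-F_{m+1,r+1}$ has exactly the shape of Lemma~\ref{key} and that lemma finishes the proof. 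In short, the paper applies the $r$-shift once on $G$ and then invokes the already-established Lemma~\ref{key} on $F$; your route pushes the convolution structure into a new auxiliary identity that itself still needs the $r$-shift to be proved.
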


\begin{proof} We prove the equality by induction on $m$. For $m=0$, the equality holds trivially. Now we define functions  $G_{m, r}$ and $F_{m, r}$
\begin{eqnarray*}
G_{m, r}(v)&:=&{\Gamma(m+r+1)\over \Gamma(v+m+r) }\sum_{l=0}^m{\Gamma(l-\alpha)\over \Gamma(l+1)}{\Gamma(v+\alpha+r+m-l)\over \Gamma(r+1+m-l)},\cr
F_{m,r}(v)&:=&  {\Gamma(\alpha+r+v)\over \Gamma(r) }\sum_{l=0}^m{\Gamma(l-\alpha)\over \Gamma(l+1)}{\Gamma(l+r)\over \Gamma(v+l+r)},
\end{eqnarray*}
 for $v>0$ and we shall conclude that $G_{m, r}=F_{m, r}$ for $m\in \N$ and $r>0$.  Note that
\begin{eqnarray*}
G_{m+1, r}(v)&:=& G_{m,r+1}(v)+ g_{m,r}(v),\cr
F_{m+1,r}(v)&:=&  F_{m,r}(v)+ f_{m,r}(v),
\end{eqnarray*}
where
\begin{eqnarray*}
g_{m,r}(v)&=&{\Gamma(m+r+2)\over \Gamma(v+m+r+1) }{\Gamma(v+\alpha+r)\over \Gamma(r+1) }{\Gamma(m+1-\alpha)\over \Gamma(m+2) },\cr
f_{m,r}(v)&=&{\Gamma(v+\alpha+r)\over \Gamma(r) }{\Gamma(m+1+r)\over \Gamma(v+m+r+1) }{\Gamma(m+1-\alpha)\over \Gamma(m+2) },\cr
\end{eqnarray*}
for $v>0$. By induction's hypothesis we have that
\begin{eqnarray*}
G_{m+1,r}(v)&=& G_{m,r+1}(v)+g_{m,r}(v)=F_{m,r+1}(v)+g_{m,r}(v)\cr
&=&F_{m+1, r+1}-f_{m,r+1}(v)+g_{m,r}(v),
\end{eqnarray*}
and to finish the proof it is enough to conclude that
$$
F_{m+1,r}(v)-F_{m+1, r+1}(v)=-f_{m,r+1}(v)+g_{m,r}(v).
$$

Now, in one hand, we have that
$$
g_{m,r}(v)-f_{m,r+1}(v)={\Gamma(v+\alpha+r)\over \Gamma(r+1) }{\Gamma(m+2+r)\over \Gamma(v+m+r+2) }{\Gamma(m+2-\alpha)\over \Gamma(m+2) }.
$$
In other hand, we have that
$$
F_{m+1,r}(v)-F_{m+1, r+1}(v)=-{\Gamma(\alpha+r+v)\over \Gamma(r+1) }\sum_{l=0}^{m+1}{\Gamma(l-\alpha)\over \Gamma(l+1)}{\Gamma(l+r)\over \Gamma(v+l+r+1)}\left(\alpha r+ l(v+\alpha)\right)
$$
and we apply Lemma \ref{key} to obtain
$$F_{m+1,r}(v)-F_{m+1, r+1}(v)=
 {\Gamma(\alpha+r+v)\Gamma(m+2+r)\Gamma(m+2-\alpha)\over\Gamma(v+m+r+2) \Gamma(m+2)\Gamma(r+1) }=g_{m,r}(v)-f_{m,r+1}(v)
$$
proving the result.
\end{proof}

To conclude this section we obtain some particular cases in Theorem \ref{funda} for $r\in \{1,2\}$. We omit the proof which the reader may  make by induction on $m$.

\begin{theorem} Take $v>0$, $\alpha\in \R_+\backslash\{0,1,2,\ldots\}$ and $m\in \N\cup\{0\}$. Then
\begin{eqnarray*}
\sum_{l=0}^m{\Gamma(l-\alpha)\over \Gamma(v+l+1)}&=&{\Gamma(-\alpha)\over (\alpha +v)\Gamma(v)}-{\Gamma(m+1-\alpha)\over (\alpha+v)\Gamma(v+m+1)}.\cr
\sum_{l=0}^m(l+1){\Gamma(l-\alpha)\over \Gamma(v+l+1)}&=&{ \Gamma(-\alpha)\over (\alpha +v+1)(\alpha +v)\Gamma(v)}\cr
&\,&\qquad \qquad-{(\alpha m+mv+\alpha+m+2v+1)\Gamma(m+1-\alpha)\over (\alpha+v+1)(\alpha+v)\Gamma(v+m+2)}.
\end{eqnarray*}

\end{theorem}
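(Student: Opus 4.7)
The plan is to prove both identities by induction on $m\in \N\cup\{0\}$, as the authors suggest. No machinery beyond the Gamma recursion $\Gamma(z+1)=z\Gamma(z)$ and elementary algebra will be needed.

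First I would dispose of the base case $m=0$. For the first identity, the two fractions on the RHS combine via $\Gamma(1-\alpha)=-\alpha\Gamma(-\alpha)$ and $\Gamma(v+1)=v\Gamma(v)$ into a single fraction that simplifies to $\Gamma(-\alpha)/\Gamma(v+1)$, matching the LHS. The $m=0$ case of the second identity reduces in the same way, with the key algebraic step being the factorisation $v(v+1)+\alpha(\alpha+2v+1)=(\alpha+v)(\alpha+v+1)$, which cancels the extra factor $(\alpha+v)(\alpha+v+1)$ appearing in the denominator.

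For the inductive step, I denote the RHS by $R_m$ and verify $R_{m+1}-R_m$ equals the new summand in the LHS. For the first identity, one computes $R_{m+1}-R_m$, factors out $\Gamma(m+1-\alpha)/\Gamma(v+m+2)$, and simplifies using $\Gamma(m+2-\alpha)=(m+1-\alpha)\Gamma(m+1-\alpha)$ and $\Gamma(v+m+2)=(v+m+1)\Gamma(v+m+1)$; the residual reduces to the trivial identity $(v+m+1)-(m+1-\alpha)=v+\alpha$. For the second identity the same scheme applies, but the residual is a polynomial identity quadratic in $m$ of the shape
$$P(m)(v+m+2)-P(m+1)(m+1-\alpha)=\text{const}\cdot(m+2)\cdot(\text{factor}),$$
with $P(m)=\alpha m+mv+\alpha+m+2v+1$, to be verified by expansion and coefficient-matching in $m$.

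The main obstacle is the polynomial bookkeeping in the inductive step of the second identity: the numerator $P(m)$ must telescope correctly with the shifts coming from the Gamma recursions. The algebra is purely mechanical but somewhat tedious, which is presumably why the authors leave it to the reader. As a sanity check, both identities can alternatively be obtained by specialising Theorem \ref{funda} to $r=1$ and $r=2$; however, the RHS there still carries a finite sum of Gamma ratios whose collapse is essentially what the present theorem is asserting, so direct induction on $m$ remains the most efficient route.
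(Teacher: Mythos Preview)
Your approach—straightforward induction on $m$—is exactly what the paper prescribes, and your outline for the first identity is correct in every detail.

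For the second identity, however, your own $m=0$ computation does not close: the factorisation $v(v+1)+\alpha(\alpha+2v+1)=(\alpha+v)(\alpha+v+1)$ is correct, but it leaves the right-hand side equal to $\Gamma(-\alpha)/\Gamma(v+2)$, whereas the printed left-hand side at $m=0$ is $\Gamma(-\alpha)/\Gamma(v+1)$. A quick numerical check (say $\alpha=1/2$, $v=2$, $m=1$) confirms the mismatch. The problem is a misprint in the statement: the denominator on the left should be $\Gamma(v+l+2)$, not $\Gamma(v+l+1)$, consistent with the $r=2$ instance of the sum $\sum_{l=0}^m\frac{\Gamma(l+r)}{\Gamma(l+1)}\frac{\Gamma(l-\alpha)}{\Gamma(v+l+r)}$ appearing in Theorem~\ref{funda} and in the subsequent Remark. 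With this correction your scheme goes through verbatim: in the inductive step the new summand is $(m+2)\Gamma(m+1-\alpha)/\Gamma(v+m+3)$, and after writing $P(m+1)=P(m)+(\alpha+v+1)$ the required residual identity
\[
P(m)(v+m+2)-P(m+1)(m+1-\alpha)=(\alpha+v+1)(\alpha+v)(m+2)
\]
factors immediately.
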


\begin{remark} For $r\in \N$, $v>0$, $m\in \N\cup\{0\}$ and $0<\alpha<1$ we conjecture that
\begin{eqnarray*}
{\Gamma(v+\alpha+r)\over \Gamma(r)}\sum_{l=0}^m{\Gamma(l+r)\Gamma(l-\alpha)\over \Gamma(v+l+r)\Gamma(l+1)}={\Gamma(v+\alpha)}\left({\Gamma(-\alpha)\over\Gamma(v)}-{P_r(\alpha,m,v)\Gamma(m-\alpha+1)\over\Gamma(m+v+r)}\right)
\end{eqnarray*}
where $P_r(\alpha,m,v)$ is a polynomial of degree $r-1$ in the variables $\alpha, m$ and $v$.

\end{remark}

\section{New properties of Ces\`aro numbers }\label{sect1}

As a function of $n,$ $k^\alpha$ is increasing  for $\alpha >1$, decreasing for $1>\alpha >0$ and $k^1(n)=1$ for $n\in \N$ (\cite[Theorem III.1.17]{Zygmund}). Furthermore, it is straightforward to check that  $k^\alpha(n)\le k^\beta(n)$ for $\beta \ge \alpha>0$ and $n\in \N_0$.

In the following, we will use the asymptotic behaviour of the sequences $k^{\alpha}.$ Note that for $\alpha\in\{0,-1,-2,\ldots\}$, $k^{\alpha}(n)=0$ for $n> -\alpha.$ In addition, for real $\alpha\notin\{0,-1,-2,\ldots\},$   \begin{equation}\label{double}
 k^{\alpha}(n)=\frac{n^{\alpha-1}}{\Gamma(\alpha)}(1+O({1\over n})), \qquad n\in \N. \end{equation}
(\cite[Vol. I, p.77 (1.18)]{Zygmund}). Moreover this property holds in a more general context. Let $\alpha,z\in\C,$ then $$\frac{\Gamma(z+\alpha)}{\Gamma(z)}=z^{\alpha}(1+\frac{\alpha(\alpha+1)}{2z}+O(|z|^{-2})),\quad |z|\to\infty,$$ whenever $z\neq 0,-1,-2,\ldots$ and $z\neq -\alpha,-\alpha-1,\ldots,$ see \cite{ET}. We are interested in the two following particular cases,

\begin{equation}\label{double2}
k^{\alpha}(n)=\frac{n^{\alpha-1}}{\Gamma(\alpha)}(1+O({1\over n})), \quad n\in \N,\,\alpha\notin\{0,-1,-2,\ldots\},
\end{equation}

\begin{equation}\label{double3}
\frac{\Gamma(z+\alpha)}{\Gamma(z)}=z^{\alpha}(1+O({1\over |z|})), \quad z\in\C_+,\,\Real\alpha>0.
\end{equation}


\begin{lemma}\label{lemma2.2} For $\alpha>0$, $j>n\ge 0$ and $q> 1$, the following inequality holds:
$$
\sum_{l=n+1}^\infty \left(\frac{k^\alpha(l-n+j)}{k^{\alpha+1}(l)}\right)^q\le C_{\alpha,q} j\left({k^{\alpha}(j)\over k^{\alpha+1}(n)}\right)^q.
$$
\end{lemma}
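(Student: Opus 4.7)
The plan is to reduce the inequality to a purely power-type estimate via the asymptotic formula (\ref{double2}), which combined with $k^\alpha(0)=1$ gives the two-sided comparison $k^\alpha(n)\asymp (n+1)^{\alpha-1}$ (with implicit constants depending only on $\alpha$) for all $n\ge 0$. Substituting $l=n+m$ on the left-hand side, matters then reduce to showing
\[
\sum_{m=1}^\infty\frac{(m+j)^{q(\alpha-1)}}{(n+m)^{q\alpha}}\le C_{\alpha,q}\,\frac{j^{1+q(\alpha-1)}}{(n+1)^{q\alpha}}.
\]
The assumption $q>1$ is exactly what makes this series converge, since its summand decays like $m^{-q}$ as $m\to\infty$.

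I would then split the sum at $m=j$. For the tail $m\ge j$, the inequalities $m\le m+j\le 2m$ and $n+m\ge m$ absorb the sign of $\alpha-1$ into a constant depending only on $\alpha$, giving a bound by $C_\alpha\sum_{m\ge j}m^{-q}$ comparable to $C_{\alpha,q}\,j^{1-q}$; the ratio with the right-hand side then collapses to $((n+1)/j)^{q\alpha}\le 1$. For the head $1\le m<j$, the uniform estimate $(m+j)^{q(\alpha-1)}\le C_\alpha\,j^{q(\alpha-1)}$ reduces the problem to bounding $\sum_{m=1}^{j-1}(n+m)^{-q\alpha}$, which I would control by integral comparison, splitting into the three regimes $q\alpha>1$, $q\alpha=1$ and $q\alpha<1$.

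The main obstacle is the borderline case $q\alpha=1$, in which the integral comparison produces the logarithm $\log(1+j/(n+1))$. The target inequality survives only because the right-hand side contributes a factor $(n+1)/j\le 1$ and the function $x\log(1+1/x)$ is bounded on $(0,1]$, which together tame the logarithm. In each of the other two regimes, the ratio of the head contribution to the right-hand side simplifies to a pure power of $(n+1)/j\le 1$, so all three cases yield a finite constant $C_{\alpha,q}$ and the proof is concluded.
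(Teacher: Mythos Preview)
Your argument is correct, but it is considerably more elaborate than the paper's. After the same substitution $l=n+m$ and the same asymptotic reduction via~(\ref{double}), the paper observes that, because $j>n$, the map $m\mapsto\frac{m+j}{m+n}$ is decreasing on $[1,\infty)$; hence
\[
\left(\frac{m+j}{m+n}\right)^{\alpha q}\le \left(\frac{j+1}{n+1}\right)^{\alpha q}\quad\text{for all }m\ge 1.
\]
This single monotonicity step pulls the $n$-dependence out of the sum entirely, leaving only $\sum_{m\ge 1}(m+j)^{-q}$, which is bounded by $\int_0^\infty(t+j)^{-q}\,dt=C_q\,j^{1-q}$. One more application of the asymptotics then gives the result.

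What you do instead is split the sum at $m=j$ and, for the head, perform a further three-way case analysis according to the sign of $q\alpha-1$, including the delicate logarithmic endpoint $q\alpha=1$. All of this works, and your treatment of the borderline case via the boundedness of $x\log(1+1/x)$ on $(0,1]$ is fine; but the paper's route shows that none of this splitting is needed. The trick you are missing is to group the powers as $\bigl(\frac{m+j}{m+n}\bigr)^{\alpha q}\cdot(m+j)^{-q}$ rather than treating $(m+j)^{q(\alpha-1)}$ and $(n+m)^{-q\alpha}$ separately: the first factor is then uniformly bounded in $m$ by monotonicity, and the second is summable for $q>1$ regardless of the value of $\alpha$.
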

\begin{proof}  We apply the estimation (\ref{double}) in several places to get
\begin{eqnarray*}
    \sum_{l=n+1}^\infty \left(\frac{k^\alpha(l-n+j)}{k^{\alpha+1}(l)}\right)^q &=& \sum_{m=1}^\infty \left(\frac{k^\alpha(m+j)}{k^{\alpha+1}(m+n)}\right)^q
    \le C_{\alpha,q} \sum_{m=1}^\infty \left(\frac{m+j}{m+n}\right)^{\alpha q} \frac{1}{(m+j)^q}\\
    &<& C_{\alpha,q}\left(\frac{j+1}{n+1}\right)^{\alpha q}  \sum_{m=1}^\infty \frac{1}{(m+j)^q}
    \le C_{\alpha,q} \left(\frac{j+1}{n+1}\right)^{\alpha q} \int_{0}^\infty  \frac{dt}{(t+j)^q}\\
    &=& C_{\alpha,q} \left(\frac{j+1}{n+1}\right)^{\alpha q}\frac{1}{j^{q-1}}\le C_{\alpha,q} j\left({k^{\alpha}(j)\over k^{\alpha+1}(n)}\right)^q,
\end{eqnarray*}
and we conclude the result.
\end{proof}

\begin{theorem} \label{llave} Take $n, u \in \N\cup\{0\}$, $\Real t>0$ or $t=0,$  and $\alpha\in \R_+\backslash\{0,1,2,\ldots\}.$ Then the
the following equality holds
$$
e^{-tu}\sum_{j=\max\{u,n\}}^\infty{j\choose u} k^{-\alpha}(j-n)(1-e^{-t})^{j-u}=e^{-t\alpha}\sum_{j=0}^{\min\{u,n\}}{n\choose j} k^{-\alpha}(u-j)e^{-tj}(1-e^{-t})^{n-j}.
$$

\end{theorem}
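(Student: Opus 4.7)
The plan is to encode both sides, viewed as functions of $u\in\N_{0}$ for fixed $n$, as ordinary generating functions in an auxiliary variable $w$, show that both generating functions equal the same compact expression, and then compare coefficients of $w^u$. The key computational input is the identity $\sum_{m=0}^{\infty}k^{-\alpha}(m)z^m=(1-z)^{\alpha}$ (valid for $|z|<1$), obtained from (\ref{generating}) by replacing $\alpha$ with $-\alpha$.

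Setting $x:=e^{-t}$ and denoting the left-hand side by $L(n,u)$, I would interchange the two sums (the joint index set $\{(u,j):u\ge 0,\ j\ge\max\{u,n\}\}$ equals $\{(u,j):j\ge n,\ 0\le u\le j\}$) and then apply the binomial theorem to the inner sum in $u$:
\begin{align*}
\sum_{u=0}^{\infty}w^{u}L(n,u)
&=\sum_{j\ge n}k^{-\alpha}(j-n)(1-x)^{j}\sum_{u=0}^{j}\binom{j}{u}\Bigl(\frac{wx}{1-x}\Bigr)^{u}\\
&=\sum_{j\ge n}k^{-\alpha}(j-n)(1-x+wx)^{j}.
\end{align*}
Shifting $j=n+m$ and invoking the generating identity above with $z=1-x+wx$ (noting $1-z=x(1-w)$) collapses this to $x^{\alpha}(1-w)^{\alpha}(1-x+wx)^{n}$.

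For the right-hand side $R(n,u)$ I would interchange the two sums in the opposite direction (admissible set $\{(u,j):0\le j\le n,\ u\ge j\}$), reindex $l:=u-j$ in the resulting inner sum, apply the generating identity, and finish with the binomial theorem:
\begin{align*}
\sum_{u=0}^{\infty}w^{u}R(n,u)
&=x^{\alpha}\sum_{j=0}^{n}\binom{n}{j}(wx)^{j}(1-x)^{n-j}\sum_{l=0}^{\infty}k^{-\alpha}(l)w^{l}\\
&=x^{\alpha}(1-w)^{\alpha}(1-x+wx)^{n},
\end{align*}
which matches the expression for the left-hand side. Extracting the coefficient of $w^{u}$ then yields $L(n,u)=R(n,u)$.

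The main technical hurdle is justifying the interchanges of summation and the use of (\ref{generating}). I would first carry out the argument for real $t>0$ (so $0<x<1$) and $|w|<1$ small enough that $|1-x+wx|<1$; in that regime every rearrangement above is absolutely convergent, so Fubini applies and the identity between the two generating functions holds as an equality of holomorphic functions of $w$ near the origin. Comparing Taylor coefficients gives the theorem for $t>0$ real, and since the right-hand side is a finite sum (hence entire in $t$) while the left-hand side is holomorphic in $t$ where its defining series converges, the identity extends to $\Real t>0$ by analytic continuation. The boundary case $t=0$ is verified directly: both sides reduce to $k^{-\alpha}(u-n)$ if $u\ge n$ and to $0$ otherwise.
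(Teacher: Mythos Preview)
Your argument is correct and genuinely different from the paper's. The paper takes the Laplace transform in the variable $t$ (via the Beta integral~\eqref{beta}), splits into the two cases $u\le n$ and $u>n$, and in each case reduces the transformed identity to a Gamma-function equality: the first case is handled by Lemma~\ref{fits} together with the semigroup property $k^{\gamma}*k^{\beta}=k^{\gamma+\beta}$, while the second case requires the more delicate Theorem~\ref{funda} (itself proved by induction via Lemma~\ref{key}). Your approach instead passes to the ordinary generating function in the discrete variable $u$, which treats both cases uniformly and collapses each side to $x^{\alpha}(1-w)^{\alpha}(1-x+wx)^{n}$ using only the binomial theorem and the basic identity~\eqref{generating}. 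This bypasses the entire apparatus of Section~\ref{sect0}; the price is that the Gamma identities of that section, which the authors present as of independent interest, do not emerge as byproducts. Your treatment of convergence (real $t>0$, $|w|$ small, then coefficient extraction and analytic continuation) and of the boundary case $t=0$ is adequate and at the same level of rigor as the paper's own analytic-continuation step.
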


\begin{proof} The case $t=0$ is clear. Let $\Real t>0,$ by analytic prolongation, it is enough to show  the formula for $t>0$. First we consider the case that $0\le u\le n$. We apply Laplace transform  and formula (\ref{beta}) in both parts to transform the formula into the equivalent expression
$$
\sum_{j=n}^\infty{j\choose u} k^{-\alpha}(j-n){\Gamma(j-u+1)\Gamma(v+u)\over \Gamma(v+j+1)}=\sum_{j=0}^u{n\choose j} k^{-\alpha}(u-j){\Gamma(j+\alpha+v)\Gamma(n-j+1)\over \Gamma(n+\alpha+v+1)}.
$$
Note that
\begin{eqnarray*}
&\,&\sum_{j=n}^\infty{j\choose u} k^{-\alpha}(j-n){\Gamma(j-u+1)\Gamma(v+u)\over \Gamma(v+j+1)}= \sum_{j=n}^\infty{\Gamma(j-n-\alpha)\over \Gamma(-\alpha)\Gamma(j-n+1)}{j!\over u!}{\Gamma(v+u)\over \Gamma(v+j+1)}\cr
&\,&= {\Gamma(v+u)\over \Gamma(-\alpha)\Gamma(u+1)}\sum_{l=0}^\infty{\Gamma(l-\alpha)\over \Gamma(l+1)}{\Gamma(l+n+1)\over \Gamma(v+l+n+1)}= {\Gamma(v+u)\Gamma(n+1)\Gamma(v+\alpha)\over \Gamma(u+1) \Gamma(v)\Gamma(n+\alpha+v+1)}
\end{eqnarray*}
where we have applied the Lemma \ref{fits}. In the other hand, we have that
\begin{eqnarray*}
&\,&\sum_{j=0}^u{n\choose j} k^{-\alpha}(u-j){\Gamma(j+\alpha+v)\Gamma(n-j+1)\over \Gamma(n+\alpha+v+1)}= \sum_{j=0}^u {n!\over j!} k^{-\alpha}(u-j){\Gamma(j+\alpha+v)\over \Gamma(n+\alpha+v+1)}\cr
&\,&\qquad= {\Gamma(n+1)\Gamma(\alpha+v)\over\Gamma(n+\alpha+v+1)} \sum_{j=0}^u  k^{-\alpha}(u-j)k^{\alpha+v}(j)= {\Gamma(n+1)\Gamma(\alpha+v)\over\Gamma(n+\alpha+v+1)}k^{v}(u)= \cr
&\,&\qquad={\Gamma(n+1)\Gamma(\alpha+v)\Gamma(u+v)\over\Gamma(n+\alpha+v+1)\Gamma(v)\Gamma(u+1)},
\end{eqnarray*}
where we have applied the semigroup property, $k^{\gamma}*k^{\beta}=k^{\gamma+\beta}$ for $\gamma, \beta \in \C$.

Now, we consider the case that  $0\le n< u$. Again, we apply Laplace transform  and formula (\ref{beta})  to obtain  the expression
$$
\sum_{j=u}^\infty{j\choose u} k^{-\alpha}(j-n){\Gamma(j-u+1)\Gamma(v+u)\over \Gamma(v+j+1)}=\sum_{j=0}^n{n\choose j} k^{-\alpha}(u-j){\Gamma(j+\alpha+v)\Gamma(n-j+1)\over \Gamma(n+\alpha+v+1)},
$$
which is equivalent to show that
$$
{\Gamma(v+u)\over \Gamma(u+1)}\sum_{j=u}^\infty {\Gamma(j-n-\alpha)\Gamma(j+1)\over \Gamma(j-n+1)\Gamma(j+v+1)}={\Gamma(n+1)\over \Gamma(v+\alpha+n+1)}\sum_{j=0}^n {\Gamma(u-j-\alpha)\Gamma(v+\alpha+j)\over \Gamma(u-j+1)\Gamma(j+1)}.
$$
Now we claim that
$$
{\Gamma(v+u)\over \Gamma(u+1)}\sum_{j=n}^{u-1} {\Gamma(j-n-\alpha)\Gamma(j+1)\over \Gamma(j-n+1)\Gamma(j+v+1)}={\Gamma(n+1)\over \Gamma(v+\alpha+n+1)}\sum_{j=n+1}^u {\Gamma(u-j-\alpha)\Gamma(v+\alpha+j)\over \Gamma(u-j+1)\Gamma(j+1)},
$$
which is equivalent to
$$
{\Gamma(v+u)\over \Gamma(u+1)}\sum_{l=0}^{u-n-1} {\Gamma(l-\alpha)\Gamma(n+l+1)\over \Gamma(l+1)\Gamma(l+n+v+1)}={\Gamma(n+1)\over \Gamma(v+\alpha+n+1)}\sum_{l=0}^{u-n-1} {\Gamma(l-\alpha)\Gamma(v+\alpha+u-l)\over \Gamma(u-l+1)\Gamma(l+1)}
$$
and we apply Theorem \ref{funda} to conclude the equality.

Then, we have to prove
$$
{\Gamma(v+u)\over \Gamma(u+1)}\sum_{j=n}^\infty {\Gamma(j-n-\alpha)\Gamma(j+1)\over \Gamma(j-n+1)\Gamma(j+v+1)}={\Gamma(n+1)\over \Gamma(v+\alpha+n+1)}\sum_{j=0}^u {\Gamma(u-j-\alpha)\Gamma(v+\alpha+j)\over \Gamma(u-j+1)\Gamma(j+1)},
$$
which is equivalent to
$$
{\Gamma(v+u)\over \Gamma(u+1)}\sum_{l=0}^\infty {\Gamma(l-\alpha)\Gamma(l+n+1)\over \Gamma(l+1)\Gamma(l+n+v+1)}={\Gamma(n+1)\over \Gamma(v+\alpha+n+1)}\sum_{j=0}^u {\Gamma(u-j-\alpha)\Gamma(v+\alpha+j)\over \Gamma(u-j+1)\Gamma(j+1)},
$$
and this identity was proved in the first considered case.
\end{proof}

\section{Fractional finite differences} \label{Weyl}

The usual finite difference of a sequence, $(f(n))_{n\in\N_0},$ is defined by $\Delta f(n):=f(n+1)-f(n)$ for $n\in \N_0$; we iterate,  $\Delta^{m+1}= \Delta^m\Delta$,  to get $$
 \Delta^{m}f(n)=\displaystyle\sum_{j=0}^{m}(-1)^{m-j}\binom{m}{j}f(n+j), \qquad n\in \N_0,$$
for $m\in \N$.  Now we consider $ W=-\Delta$, i.e., $Wf(n)=W^1f(n)=f(n)-f(n+1),$ for $n\in \N_0$ and $W^mf(n)=(-1)^m\Delta^m$
for $m\in \N$. In \cite[Definition 2.2]{ALMV} the authors have extended this concept of finite difference of a sequence to the fractional (non integer) case. We recall it:

\begin{definition}\label{WeylDifference} Let $f: \mathbb{N}_0 \to \C$  and $\alpha>0$ be given. The  Weyl sum of order $\alpha$ of $f$, $W^{-\alpha}f$, is defined by $$W^{-\alpha}f(n):=\displaystyle\sum_{j=n}^{\infty} k^{\alpha}(j-n)f(j),\qquad n\in\N_0,$$
whenever the right hand side makes sense. The  Weyl difference of order $\alpha$ of $f$, $W^\alpha f$, is defined by $$W^{\alpha}f(n)=W^m W^{-(m-\alpha)}f(n)=(-1)^m\Delta^mW^{-(m-\alpha)}f(n),\qquad n\in\N_0,$$ for $m=[\alpha]+1,$  whenever the right hand side makes sense. In particular $W^{\alpha}: c_{0,0}\to c_{0,0}$ for $\alpha \in \R$.
\end{definition}

The following theorem shows an equivalent definition of the fractional Weyl differences.

\begin{theorem} \label{quivalent}Let $f: \mathbb{N}_0 \to \C$ and $\alpha>0$ be given. Then $$W^{\alpha}f(n)=\displaystyle\sum_{j=n}^{\infty} k^{-\alpha}(j-n)f(j),\qquad n\in\N_0,$$ whenever both expressions  make sense.
\end{theorem}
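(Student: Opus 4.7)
The plan is to unravel the definition of $W^\alpha$ and reorganize the resulting double sum as a discrete convolution of Ces\`aro kernels, so that the semigroup property $k^{\gamma}\ast k^{\delta}=k^{\gamma+\delta}$ collapses everything to $k^{-\alpha}$.

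Set $m:=[\alpha]+1$, so that by Definition \ref{WeylDifference},
\begin{equation*}
W^{\alpha}f(n)=(-1)^{m}\Delta^{m}W^{-(m-\alpha)}f(n)=\sum_{i=0}^{m}(-1)^{i}\binom{m}{i}W^{-(m-\alpha)}f(n+i).
\end{equation*}
Since $m-\alpha>0$, the Weyl sum admits the series representation $W^{-(m-\alpha)}f(n+i)=\sum_{j=n+i}^{\infty}k^{m-\alpha}(j-n-i)f(j)$. Plug this in. Assuming both sides of the claimed identity make sense, the resulting double sum is absolutely summable, so I can exchange the order of summation and collect the coefficient of $f(j)$ for each fixed $j\ge n$:
\begin{equation*}
W^{\alpha}f(n)=\sum_{j=n}^{\infty}f(j)\sum_{i=0}^{\min\{m,\,j-n\}}(-1)^{i}\binom{m}{i}k^{m-\alpha}(j-n-i).
\end{equation*}

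The key step is to recognize the inner sum as a convolution. From the identity $k^{\beta}(n)=(-1)^{n}\binom{-\beta}{n}$ recalled in the introduction, one has, for every $i\in\N_{0}$,
\begin{equation*}
k^{-m}(i)=(-1)^{i}\binom{m}{i},
\end{equation*}
with the convention $\binom{m}{i}=0$ for $i>m$. Therefore the inner sum is precisely $(k^{-m}\ast k^{m-\alpha})(j-n)$, and the semigroup property $k^{-m}\ast k^{m-\alpha}=k^{-m+(m-\alpha)}=k^{-\alpha}$ (valid for complex orders and recalled just after \eqref{generating}) yields
\begin{equation*}
\sum_{i=0}^{\min\{m,\,j-n\}}(-1)^{i}\binom{m}{i}k^{m-\alpha}(j-n-i)=k^{-\alpha}(j-n),
\end{equation*}
which gives the desired formula $W^{\alpha}f(n)=\sum_{j=n}^{\infty}k^{-\alpha}(j-n)f(j)$.

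The main obstacle is bookkeeping rather than conceptual: one must handle the two regimes $j-n\ge m$ and $j-n<m$ uniformly (the convention $k^{-m}(i)=0$ for $i>m$ takes care of this) and, more importantly, justify the interchange of summation. Under the hypothesis that both sides make sense, the absolute convergence of $\sum_{j\ge n+i}|k^{m-\alpha}(j-n-i)f(j)|$ for each $i\le m$ is enough, since the outer sum over $i$ is finite; alternatively one may first verify the identity on the dense subspace $c_{0,0}$, where the rearrangement is trivial, and extend by density once it is paired with the appropriate norm in later sections.
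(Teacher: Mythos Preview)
Your argument is correct and follows essentially the same route as the paper: expand $W^\alpha=W^{m}W^{-(m-\alpha)}$, interchange the finite sum over $i$ with the series in $j$, and identify the inner sum as the finite convolution $(k^{-m}\ast k^{m-\alpha})(j-n)=k^{-\alpha}(j-n)$. The only cosmetic difference is that the paper splits the $j$-range at $n+m$ to make the convolution structure explicit, whereas you absorb this into the observation $k^{-m}(i)=0$ for $i>m$; and the paper treats integer $\alpha$ separately (where the identity is immediate from $W^{m}f(n)=\sum_{i=0}^{m}(-1)^{i}\binom{m}{i}f(n+i)$), avoiding the need for $W^{-1}f$ to exist in that case.
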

\begin{proof}
If $\alpha\in\N,$ then $$W^{\alpha}f(n)=\displaystyle\sum_{j=0}^{\alpha}(-1)^j\binom{\alpha}{j}f(n+j)=\displaystyle\sum_{j=0}^{\infty}k^{-\alpha}(j)f(n+j)=\displaystyle\sum_{j=n}^{\infty}k^{-\alpha}(n-j)f(j).$$ Now let $m-1<\alpha<m$ with $m\in\N.$ Then \begin{eqnarray*}
&\,&W^{\alpha}f(n)=W^mW^{-(m-\alpha)}f(n)=\displaystyle\sum_{j=0}^{m}(-1)^j\binom{m}{j}\displaystyle\sum_{l=n+j}^{\infty}k^{m-\alpha}(l-n-j)f(l)\\
&=&\displaystyle\sum_{l=n}^{n+m}f(l)\displaystyle\sum_{j=0}^{l-n}(-1)^j\binom{m}{j}k^{m-\alpha}(l-n-j)+\displaystyle\sum_{l=n+m+1}^{\infty}f(l)\displaystyle\sum_{j=0}^{m}(-1)^j\binom{m}{j}k^{m-\alpha}(l-n-j)\\
&=&\displaystyle\sum_{l=n}^{n+m}f(l)(k^{-m}*k^{m-\alpha})(l-n)+\displaystyle\sum_{l=n+m+1}^{\infty}f(l)(k^{-m}*k^{m-\alpha})(l-n)\cr
&=&\displaystyle\sum_{l=n}^{\infty}f(l)k^{-\alpha}(l-n),
\end{eqnarray*}
and we conclude the result.
\end{proof}

The fractional difference $\displaystyle\sum_{j=n}^{\infty} k^{-\alpha}(j-n)f(j)$ has been studied recently in $\ell^p$ and in discrete H\"older spaces, see \cite{ADT}. For example, observe that for $0<\alpha<1$ and the sequence $k^1,$ the series $\displaystyle\sum_{j=n}^{\infty} k^{-\alpha}(j-n)k^1(j)$ is convergent, meanwhile $W^{\alpha}k^1$ is not defined.

Now, we present a  technical (and interesting) result which we will need in the proofs of the main theorems..

\begin{proposition}\label{leibniz} If $\alpha \in \R$ and $f\in c_{0,0},$ then $$W^{\alpha}(jf(j))(n)=(n+\alpha)W^{\alpha}f(n)-\alpha W^{\alpha-1}f(n),\quad n\in\N_0.$$

\end{proposition}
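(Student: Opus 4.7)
Since $f\in c_{0,0}$ all sums below are in fact finite, so there are no convergence issues and all rearrangements are justified. My starting point is Theorem \ref{quivalent} (together with Definition \ref{WeylDifference} for the case $\alpha\le 0$, in which the same series representation holds trivially), which yields the uniform formula
$$
W^{\alpha}g(n)=\sum_{j=n}^{\infty}k^{-\alpha}(j-n)g(j),\qquad n\in\N_0,\ \alpha\in\R,
$$
for every $g\in c_{0,0}$. I would apply this with $g(j)=jf(j)$ and split $j=(j-n)+n$ to get
$$
W^{\alpha}(jf(j))(n)= nW^{\alpha}f(n)+\sum_{j=n}^{\infty}(j-n)k^{-\alpha}(j-n)f(j),
$$
so it remains to identify the second sum with $\alpha W^{\alpha}f(n)-\alpha W^{\alpha-1}f(n)$.

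The key ingredient is the Gamma-function identity $l\,k^{-\alpha}(l)=-\alpha\,k^{1-\alpha}(l-1)$ for $l\ge 1$, which follows at once from $k^{-\alpha}(l)=\Gamma(l-\alpha)/(\Gamma(-\alpha)\,l!)$ and $k^{1-\alpha}(l-1)=\Gamma(l-\alpha)/(\Gamma(1-\alpha)\,(l-1)!)$ together with $\Gamma(1-\alpha)=-\alpha\,\Gamma(-\alpha)$. Substituting this into the residual sum and setting $m=l-1$, I obtain
$$
\sum_{j=n}^{\infty}(j-n)k^{-\alpha}(j-n)f(j)=-\alpha\sum_{m=0}^{\infty}k^{1-\alpha}(m)f(n+1+m)=-\alpha W^{\alpha-1}f(n+1).
$$

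To compare $W^{\alpha-1}f(n+1)$ with $W^{\alpha-1}f(n)$, I would use the Pascal-type recurrence $k^{\beta}(l)-k^{\beta}(l-1)=k^{\beta-1}(l)$ (a direct consequence of $\Gamma(\beta)=(\beta-1)\Gamma(\beta-1)$) applied with $\beta=1-\alpha$. A telescoping computation then gives
$$
W^{\alpha-1}f(n)-W^{\alpha-1}f(n+1)=f(n)+\sum_{j=n+1}^{\infty}k^{-\alpha}(j-n)f(j)=W^{\alpha}f(n),
$$
where the last equality uses $k^{-\alpha}(0)=1$. Plugging this back gives $-\alpha W^{\alpha-1}f(n+1)=\alpha W^{\alpha}f(n)-\alpha W^{\alpha-1}f(n)$, which combined with the initial splitting yields the desired identity $W^{\alpha}(jf(j))(n)=(n+\alpha)W^{\alpha}f(n)-\alpha W^{\alpha-1}f(n)$.

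No step is technically deep; the only mild subtlety is the careful handling of the boundary term $l=0$ when converting $l\,k^{-\alpha}(l)$ and the single shift $W^{\alpha-1}f(n+1)\mapsto W^{\alpha-1}f(n)$, which is where the three-term relation for $k^{\beta}$ must be used cleanly. Given these two Ces\`aro-number identities, the proposition reduces to bookkeeping of series indices.
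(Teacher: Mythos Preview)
Your argument is correct. The only quibble is that the Gamma-function derivation of $l\,k^{-\alpha}(l)=-\alpha\,k^{1-\alpha}(l-1)$ tacitly assumes $\alpha\notin\N_0$ (so that $\Gamma(-\alpha)$ makes sense); for integer $\alpha$ the identity is checked directly via $k^{-\alpha}(l)=(-1)^l\binom{\alpha}{l}$, or one argues by continuity. This is cosmetic.

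Your route differs from the paper's. The paper first proves the identity for \emph{negative} exponents by the splitting $j=(j-n+\alpha)+(n-\alpha)$ together with $(l+\alpha)k^{\alpha}(l)=\alpha\,k^{\alpha+1}(l)$, obtaining $W^{-\alpha}(jf(j))(n)=\alpha W^{-\alpha-1}f(n)+(n-\alpha)W^{-\alpha}f(n)$; it then deduces the positive case by an inversion trick, applying $W^{-\alpha}$ to the conjectured right-hand side and checking it collapses to $nf(n)$. Your proof instead invokes Theorem~\ref{quivalent} to write $W^\alpha$ as a $k^{-\alpha}$-sum for \emph{all} $\alpha$, splits $j=(j-n)+n$, and uses the two Ces\`aro-number identities $l\,k^{-\alpha}(l)=-\alpha\,k^{1-\alpha}(l-1)$ and $k^{1-\alpha}(l)-k^{1-\alpha}(l-1)=k^{-\alpha}(l)$. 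The advantage of your approach is uniformity: a single computation covers every $\alpha\in\R$ with no case distinction and no inversion step. The paper's approach, by contrast, does not need Theorem~\ref{quivalent} at all, since the negative-exponent formula is just Definition~\ref{WeylDifference}. Note also that your telescoping step $W^{\alpha-1}f(n)-W^{\alpha-1}f(n+1)=W^{\alpha}f(n)$ is nothing but $W\circ W^{\alpha-1}=W^{\alpha}$, which could be quoted directly from the semigroup property of the Weyl differences rather than re-derived via the Pascal recurrence.
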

\begin{proof} Let $\alpha>0,$ then \begin{eqnarray*}
W^{-\alpha}(jf(j))(n)&=&\displaystyle\sum_{j=n}^{\infty}k^{\alpha}(j-n)(j+\alpha-n)f(j)+\displaystyle\sum_{j=n}^{\infty}k^{\alpha}(j-n)(n-\alpha)f(j) \\
&=& \alpha W^{-\alpha-1}f(n)+(n-\alpha)W^{-\alpha}f(n),
\end{eqnarray*}
for $n\in \N_0,$ and the proposition is proved for negative  $\alpha$. Note that $$W^{-\alpha}(jW^{\alpha}f(j)-\alpha W^{\alpha-1}f(j))=\alpha W^{-1}f(n)+(n-\alpha)f(n)-\alpha W^{-1}f(n)=(n-\alpha)f(n),$$ we  apply $W^{\alpha}$ to the above identity, and we get the result.
\end{proof}

\begin{example}\label{ex22}

\begin{itemize} \item[(i)] {\rm Let $\lambda \in \mathbb{C}\backslash\{0\},$ and $r_{\lambda}(n):=\lambda^{-(n+1)}$ for $n\in\N_0.$ Note that $r_{\lambda}$ are eigenfunctions for the operator $W^{\alpha}$ for $\alpha\in \R$ and $\vert \lambda\vert >1$, i.e.,
$$
W^{\alpha}r_{\lambda}= \frac{(\lambda-1)^{\alpha}}{\lambda^{\alpha}} r_{\lambda},\qquad |\lambda|>1,
$$
see details \cite[Example 2.5(i)]{ALMV}}

\item[(ii)]{\rm Let $\alpha\in \R$ and $m\in\N,$ then $$W^m k^{\alpha}(n)=(-1)^{m}k^{\alpha-m}(n+m),\quad n\in\N_0.$$
Note that $k^{\alpha}(0)-k^{\alpha}(1)=1-\alpha=-k^{\alpha-1}(1)$. Let $n\in\N,$ we have $$
W k^{\alpha}(n)=k^{\alpha}(n)-k^{\alpha}(n+1)=\frac{\alpha(\alpha+1)\cdots(\alpha+n-1)}{n!}\biggl( 1-\frac{\alpha+n}{n+1}\biggr)=-k^{\alpha-1}(n+1).$$ Then we iterate to show that $W^m k^{\alpha}(n)=(-1)^{m}k^{\alpha-m}(n+m)$ for $m, n\in\N_0$.
}
\end{itemize}
\end{example}

\section{Semigroups on sequence spaces and fractional finite differences}

In this section we study the fractional Weyl differences of the one-parameter operator families $(T(t))_{t\geq 0}$ and $(S(t))_{t\geq 0}$ given by $$T(t)f(n):=\displaystyle\sum_{j=0}^n\binom{n}{j}e^{-tj}(1-e^{-t})^{n-j}f(j),$$ $$S(t)f(n):=e^{-tn}\displaystyle\sum_{j=n}^{\infty}\binom{j}{n}(1-e^{-t})^{j-n}f(j),$$ for $ n\in\N_0,$ $t\geq 0,$ and $(f(n))_{n\in\N_0}$ a sequence where the above operator families are defined. They will play a key role in Section \ref{sect5}. It is a simple check that $(T(t))_{t\geq 0}$ and $(S(t))_{t\geq 0}$ have the semigroup property.

Let $f:\N_0\to \C$ be a scalar sequence. {We  recall that} the $Z$-transform of $f$ is defined by
\begin{equation} \label{zeta}
\tilde f(z)= \sum_{n=0}^{\infty} f(n) z^{n},
\end{equation}
for all $z$ such that this series converges. The set of numbers $z$ in the complex plane for which {the} series \eqref{zeta} converges
is called the region of convergence of $\tilde f$.

By (\ref{generating}),  note that $\widetilde{ k^{\alpha}}(z)= \displaystyle{1 \over (1-z)^{\alpha}},$ for $\vert z\vert <1.$  We check the $Z$-transform of semigroups $(T(t))_{t\geq 0}$ and $(S(t))_{t\geq 0}$.

\begin{theorem} Take $f:\N_0\to \C$ such $\widetilde{f}(z)$ exists for $\vert z\vert <1$. Then
\begin{eqnarray*}
\widetilde{ T(t)f}(z)&=& \widetilde{f}\left({e^{-t}z\over 1-z(1-e^{-t})}\right), \cr
\widetilde{ S(t)f}(z)&=& \widetilde{f}\left(e^{-t}(z-1)+1\right),
\end{eqnarray*}
for $\vert z\vert <1$ and $t>0$.
\end{theorem}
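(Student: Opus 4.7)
The plan is to expand each $Z$-transform as a double series, swap the order of summation (justified on $\vert z\vert<1$ by absolute convergence), and then collapse the resulting inner sum in closed form using either the binomial theorem or the generating-function identity \eqref{generating}.

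For $S(t)f$, I would write
\[
\widetilde{S(t)f}(z)=\sum_{n=0}^\infty z^n e^{-tn}\sum_{j=n}^\infty \binom{j}{n}(1-e^{-t})^{j-n}f(j),
\]
reorder so that $j$ is outside and $n$ runs from $0$ to $j$, and recognize the inner sum as the binomial expansion
\[
\sum_{n=0}^j \binom{j}{n}(ze^{-t})^{n}(1-e^{-t})^{j-n}=\bigl(ze^{-t}+1-e^{-t}\bigr)^j=\bigl(1+e^{-t}(z-1)\bigr)^j.
\]
Summing in $j$ then yields $\widetilde f(1+e^{-t}(z-1))$ in one line.

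For $T(t)f$, the analogous rearrangement produces
\[
\widetilde{T(t)f}(z)=\sum_{j=0}^\infty e^{-tj}f(j)\sum_{n=j}^\infty \binom{n}{j}(1-e^{-t})^{n-j}z^{n},
\]
and after the substitution $m=n-j$ the inner sum becomes $z^j\sum_{m=0}^\infty k^{j+1}(m)\bigl(z(1-e^{-t})\bigr)^m$, which by \eqref{generating} collapses to $z^j/(1-z(1-e^{-t}))^{j+1}$. Pulling out $1/(1-z(1-e^{-t}))$ turns what remains into a series in powers of
\[
\varphi(z):=\frac{e^{-t}z}{1-z(1-e^{-t})}
\]
with coefficients $f(j)$, which is precisely $\widetilde f(\varphi(z))$ (up to the Möbius prefactor one must keep track of to match the stated identity).

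The two technical points to discharge are the interchange of summation and the fact that $\varphi(z)$ and the affine image $\psi(z):=1+e^{-t}(z-1)$ both lie in the disc of convergence of $\widetilde f$ whenever $\vert z\vert<1$. The $\psi$ case is immediate, since $\psi$ sends $\D$ into the open disc of radius $e^{-t}$ about $1-e^{-t}$, contained in $\D$. For $\varphi$, a direct check shows $\varphi(0)=0$, $\varphi(1)=1$, and $\varphi(\D)\subseteq\D$ for $t\geq 0$; once invariance is known, Fubini applied to the non-negative sum $\sum_{n,j}\binom{n}{j}e^{-tj}(1-e^{-t})^{n-j}\vert z\vert^n\vert f(j)\vert$ legitimizes the rearrangement, and the very same generating-function computation run on absolute values shows the bound is finite. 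The main (mild) obstacle is therefore just the bookkeeping of the Möbius factor in the $T(t)$ case; the $S(t)$ identity is essentially a one-step binomial collapse.
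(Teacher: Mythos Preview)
Your argument is exactly the paper's: interchange the double sum by Fubini, then collapse the inner sum via the binomial theorem for $S(t)$ and via the generating function \eqref{generating} (with $\binom{j+m}{j}=k^{j+1}(m)$) for $T(t)$. Your concern about the ``M\"obius prefactor'' is warranted: the paper's own computation terminates at
\[
\widetilde{T(t)f}(z)=\frac{1}{1-z(1-e^{-t})}\,\widetilde f\!\left(\frac{e^{-t}z}{1-z(1-e^{-t})}\right),
\]
so the displayed identity in the statement is missing that factor; the corrected form is also what the paper actually uses later in Remark~\ref{holo}. There is nothing to reconcile---just record the prefactor and move on.
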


\begin{proof}  By Fubini theorem, we have that
\begin{eqnarray*}
&\,&\widetilde{ T(t)f}(z)=\sum_{n=0}^\infty \left(\sum_{j=0}^n\binom{n}{j}e^{-tj}(1-e^{-t})^{n-j}f(j)\right)z^n\cr
&\,&=\sum_{j=0}^\infty e^{-tj} f(j)z^j \left(\sum_{l=0}^\infty\binom{j+l}{j}(1-e^{-t})^{l}z^l\right)=\sum_{j=0}^\infty e^{-tj} f(j)z^j \left(\sum_{l=0}^\infty k^{j+1}(l)((1-e^{-t})z)^{l}\right)\cr
&\,&={1\over 1-z(1-e^{-t})}\sum_{j=0}^\infty  f(j)\left({e^{-t}z\over 1-z(1-e^{-t})}\right)^j= {1\over 1-z(1-e^{-t})}\widetilde{f}\left({e^{-t}z\over 1-z(1-e^{-t})}\right),
\end{eqnarray*}
where we have applied the formula (\ref{generating}).

Now, again by Fubini theorem, one gets
\begin{eqnarray*}
&\,&\widetilde{ S(t)f}(z)=\sum_{n=0}^\infty \left(e^{-tn}\displaystyle\sum_{j=n}^{\infty}\binom{j}{n}(1-e^{-t})^{j-n}f(j)\right)z^n\cr
&\,&=\sum_{j=0}^\infty (1-e^{-t})^j f(j) \left(\sum_{n=0}^j\binom{j}{n}\left({e^{-t}z\over 1-e^{-t}}\right)^n\right)\cr
&\,&=\sum_{j=0}^\infty (1-e^{-t})^j f(j) \left(1 +{e^{-t}z\over 1-e^{-t}}\right)^j= \widetilde{f}\left(e^{-t}(z-1)+1\right),
\end{eqnarray*}
and we conclude the proof.
\end{proof}

\begin{remark} \label{compos}{\rm Semigroups  of analytic self maps on the unit  disc $\D$ have been considered in detail in the survey \cite{Sis}. In particular, the image of  the semigroups  $(T(t))_{t\geq 0}$ and $(S(t))_{t\geq 0}$ via the $Z$-transform are $(\psi_t)_{t>0}$ and $(\phi_t)_{t>0},$ given by
$$
\psi_t(z):={{e^{-t}z\over 1-z(1-e^{-t})}}, \qquad \phi_t(z): ={e^{-t}(z-1)+1},
$$
for $t>0$ and $z\in \D$, which have been also introduced in previous papers. In \cite[Section 3]{PLMSSis}, the semigroup $(\psi_t)_{t>0}$ is introduced to study the Ces\`{a}ro operator on the space $H^p$ on the disc $\D$ an on the Bergman space in \cite{ArchivSis}, see also \cite[Theorem 3.2]{Xiao}.  }

\end{remark}

\begin{lemma}\label{WeylSem} Let $\alpha\geq 0$ and $f\in c_{0,0}.$ Then the following equality holds \begin{equation}\label{eq0}W^{\alpha}S(t)f(n)=e^{-tn}\displaystyle\sum_{j=n}^{\infty}\binom{j+\alpha}{n+\alpha}(1-e^{-t})^{j-n}W^{\alpha}f(j)\quad t\geq 0,\,n\in\N_0.\end{equation}
As a consequence, we have that
$$
k^{\alpha+1}W^{\alpha}(S(t)f)=S(t)(k^{\alpha+1}W^{\alpha}f), \qquad t\geq 0.$$
\end{lemma}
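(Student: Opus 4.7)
The plan is to first apply Theorem~\ref{quivalent} to both sides of equation~(\ref{eq0}), rewriting $W^{\alpha}$ as the series $\sum_{j\geq n}k^{-\alpha}(j-n)(\cdot)$. On the left, this rewrites $W^{\alpha}S(t)f(n)$ as a double sum involving $k^{-\alpha}(j-n)$ and the defining expansion of $S(t)f(j)$; on the right, the $W^{\alpha}f(j)$ is expanded similarly. Since $f \in c_{0,0}$, the sums are finite and we may freely interchange orders of summation. Both sides then become $\sum_{l \ge n} f(l) \cdot (\text{inner finite sum})$, and the lemma reduces to proving the combinatorial identity
\begin{equation*}
\sum_{j=n}^{l}k^{-\alpha}(j-n)\binom{l}{j}e^{-tj}(1-e^{-t})^{l-j} = e^{-tn}\sum_{j=n}^{l}\binom{j+\alpha}{n+\alpha}(1-e^{-t})^{j-n}k^{-\alpha}(l-j)
\end{equation*}
for every $l\geq n \geq 0$ and $t \geq 0$.

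Next, I would prove this identity by the generating function method. Write $x=1-e^{-t}$, $m=l-n$, and shift the summation index by $n$; both sides become polynomials of degree $m$ in $x$. Forming the double generating series $\sum_{m\geq 0}(\cdot)\,y^m$, and using the identifications $\binom{j+\alpha}{n+\alpha}=k^{n+\alpha+1}(j-n)$, $\binom{n+m}{n+i}=k^{n+i+1}(m-i)$ together with the generating formula $\sum_{j\geq 0}k^{\gamma}(j)z^j=(1-z)^{-\gamma}$ from (\ref{generating}), the right-hand series factorizes at once as $(1-xy)^{-(n+\alpha+1)}(1-y)^{\alpha}$. The left-hand series becomes
\begin{equation*}
(1-xy)^{-(n+1)}\sum_{i\geq 0}k^{-\alpha}(i)\left(\frac{(1-x)y}{1-xy}\right)^{\!i} = (1-xy)^{-(n+1)}\left(1-\frac{(1-x)y}{1-xy}\right)^{\!\alpha},
\end{equation*}
and the elementary collapse $1-\frac{(1-x)y}{1-xy}=\frac{1-y}{1-xy}$ reduces it to the same expression $(1-xy)^{-(n+\alpha+1)}(1-y)^{\alpha}$. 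Comparing coefficients of $y^m$ yields the finite identity, and hence (\ref{eq0}).

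Finally, the ``As a consequence'' clause follows immediately from (\ref{eq0}). Multiplying both sides by $k^{\alpha+1}(n)$ and invoking the elementary identity
\begin{equation*}
k^{\alpha+1}(n)\binom{j+\alpha}{n+\alpha}=\binom{j}{n}k^{\alpha+1}(j)
\end{equation*}
(both equal $\Gamma(j+\alpha+1)/[\Gamma(\alpha+1)\Gamma(n+1)\Gamma(j-n+1)]$) converts the right-hand side of (\ref{eq0}) into precisely $S(t)(k^{\alpha+1}W^{\alpha}f)(n)$. The main obstacle is the combinatorial identity: the trick of recognizing both sides as Ces\`aro numbers $k^{\gamma}$ with shifted upper parameters and exploiting the collapse $1-\frac{(1-x)y}{1-xy}=\frac{1-y}{1-xy}$ is what makes the generating function calculation succeed; a direct appeal to Theorem~\ref{llave} would not apply, since the binomial coefficient $\binom{l}{j}$ here has the summed index $j$ in the bottom rather than the top.
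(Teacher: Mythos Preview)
Your argument is correct and takes a genuinely different route from the paper's. The paper first proves (\ref{eq0}) for $\alpha=1$ by direct manipulation, then inducts to integer $\alpha=m$, and finally handles non-integer $\alpha$ by writing $W^{\alpha}=W^{\alpha-m}W^{m}$ and carrying out a rather long computation that culminates in an application of Chu--Vandermonde. You bypass this three-stage argument entirely: applying Theorem~\ref{quivalent} on both sides reduces (\ref{eq0}) directly to a one-parameter family of combinatorial identities (indexed by $l\ge n$), which you then prove uniformly in $\alpha$ by the generating-function method. The collapse $1-\frac{(1-x)y}{1-xy}=\frac{1-y}{1-xy}$ is exactly what is needed, and the use of (\ref{generating}) for the three exponents $-\alpha$, $n+i+1$, $n+\alpha+1$ is clean. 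Your remark about Theorem~\ref{llave} is also apt: the paper does not invoke it for this lemma (it is used only for the companion Lemma~\ref{WeylSemT} on $T(t)$), precisely because the summed index sits in the wrong slot of the binomial coefficient here. Your approach is shorter and treats all $\alpha\ge 0$ at once; the paper's induction has the advantage of being self-contained, not relying on Theorem~\ref{quivalent}.

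One small imprecision worth flagging: after the substitution $x=1-e^{-t}$ and the index shift, the two sides are polynomials of degree $n+m$ in $x$, not $m$; your displayed generating series for the left side shows that you are in fact first cancelling the common factor $e^{-tn}=(1-x)^{n}$ from both sides before summing over $m$. This is harmless, but you should say so explicitly. The ``consequence'' clause is handled exactly as in the paper, via the identity $k^{\alpha+1}(n)\binom{j+\alpha}{n+\alpha}=\binom{j}{n}k^{\alpha+1}(j)$.
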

\begin{proof}
First we suppose that $\alpha=1.$ Then \begin{eqnarray*}
W S(t)f(n)&=& e^{-tn}\displaystyle\sum_{j=n}^{\infty}\binom{j}{n}(1-e^{-t})^{j-n}f(j)-e^{-t(n+1)}\displaystyle\sum_{j=n+1}^{\infty}\binom{j}{n+1}(1-e^{-t})^{j-n-1}f(j)\\
&=&e^{-tn}\displaystyle\sum_{j=n}^{\infty}\binom{j}{n}(1-e^{-t})^{j-n}f(j)+e^{-tn}\displaystyle\sum_{j=n+1}^{\infty}\binom{j}{n+1}(1-e^{-t})^{j-n}f(j) \\
&&-e^{-tn}\displaystyle\sum_{j=n+1}^{\infty}\binom{j}{n+1}(1-e^{-t})^{j-n-1}f(j),
\end{eqnarray*} where we have used that $e^{-t(n+1)}=e^{-tn}-e^{-tn}(1-e^{-t}).$ Note that $\binom{j}{n}+\binom{j}{n+1}=\binom{j+1}{n+1},$ and then we get \begin{eqnarray*}
W S(t)f(n)&=& e^{-tn}\biggl(f(n)+ \displaystyle\sum_{j=n+1}^{\infty}\binom{j+1}{n+1}(1-e^{-t})^{j-n}f(j)\\
&&-\displaystyle\sum_{j=n+1}^{\infty}\binom{j}{n+1}(1-e^{-t})^{j-n-1}f(j)\biggr)\\
&=&e^{-tn} \displaystyle\sum_{j=n}^{\infty}\binom{j+1}{n+1}(1-e^{-t})^{j-n}(f(j)-f(j+1)),
\end{eqnarray*}
where we have done a change of variable in the last step.

Now we proceed by induction. Let $m\in\N,$ and we suppose \eqref{eq0} true for $\alpha\in\N$ with $\alpha<m.$ Then using that $W^m=WW^{m-1}$ and the same arguments that in the above case we have  \begin{eqnarray*}
W^m S(t)f(n)&=& W^{m-1}S(t)f(n)-W^{m-1}S(t)f(n+1)\\
&=&e^{-tn}\displaystyle\sum_{j=n}^{\infty}\binom{j+m-1}{n+m-1}(1-e^{-t})^{j-n}W^{m-1}f(j)\\
&&+e^{-tn}\displaystyle\sum_{j=n+1}^{\infty}\binom{j+m-1}{n+m}(1-e^{-t})^{j-n}W^{m-1}f(j) \\
&&-e^{-tn}\displaystyle\sum_{j=n+1}^{\infty}\binom{j+m-1}{n+m}(1-e^{-t})^{j-n-1}W^{m-1}f(j)\\
&=&e^{-tn} \displaystyle\sum_{j=n}^{\infty}\binom{j+m}{n+m}(1-e^{-t})^{j-n}(W^{m-1}f(j)-W^{m-1}f(j+1)).
\end{eqnarray*}

Finally we prove the general case. Let $m<\alpha<m+1$ with $m=[\alpha].$ We write \begin{eqnarray*}
W^{\alpha}S(t)f(n)&=&W^{\alpha-m}W^m S(t)f(n)\\
&=&\displaystyle\sum_{j=n}^{\infty}k^{1+m-\alpha}(j-n)W^m S(t)f(j)-\displaystyle\sum_{j=n+1}^{\infty}k^{1+m-\alpha}(j-n-1)W^m S(t)f(j)\\
&=&\displaystyle\sum_{j=n}^{\infty}(-1)^{j-n}\binom{\alpha-m}{j-n}W^m S(t)f(j)\\
&=&\displaystyle\sum_{j=n}^{\infty}(-1)^{j-n}\binom{\alpha-m}{j-n}e^{-tj}\displaystyle\sum_{v=j}^{\infty}\binom{v+m}{j+m}(1-e^{-t})^{v-j}W^mf(v)\\
&=&e^{-tn}\displaystyle\sum_{j=n}^{\infty}\binom{\alpha-m}{j-n}\sum_{u=0}^{j-n}(-1)^{u}\binom{j-n}{u}\displaystyle\sum_{v=j}^{\infty}\binom{v+m}{j+m}(1-e^{-t})^{v-n-u}W^mf(v),
\end{eqnarray*}
where we have applied $W^{\alpha}=W^{\alpha-m}W^m$ (\cite[Remark 2.3, Proposition 2.4]{ALMV}), Example \ref{ex22} (ii) and the equality $e^{-tj}=e^{-tn}\sum_{u=0}^{j-n}(-1)^{j-n-u}\binom{j-n}{u}(1-e^{-t})^{j-n-u}.$ Applying twice Fubini's Theorem and a change of variable we have \begin{displaymath}\begin{array}{l}
\displaystyle W^{\alpha}S(t)f(n)=e^{-tn}\displaystyle\sum_{u=0}^{\infty}(-1)^{u}\displaystyle\sum_{v=n+u}^{\infty}(1-e^{-t})^{v-n-u}W^mf(v)\sum_{j=n+u}^{v}\binom{\alpha-m}{j-n}\binom{j-n}{u}\binom{v+m}{j+m}\\
\displaystyle =e^{-tn}\displaystyle\sum_{u=0}^{\infty}(-1)^{u}\displaystyle\sum_{v=n+u}^{\infty}(1-e^{-t})^{v-n-u}W^mf(v)\sum_{l=0}^{v-n-u}\binom{\alpha-m}{l+u}\binom{l+u}{u}\binom{v+m}{l+u+n+m}.
\end{array}\end{displaymath}
It is a simple check that $\binom{\alpha-m}{l+u}\binom{l+u}{u}=\binom{\alpha-m}{u}\binom{\alpha-m-u}{l}.$ Then using the Chu-Vandermonde's identity, Fubini's Theorem and a change of variable we have \begin{eqnarray*}
W^{\alpha}S(t)f(n)&=&e^{-tn}\displaystyle\sum_{u=0}^{\infty}(-1)^{u}\binom{\alpha-m}{u}\displaystyle\sum_{v=n+u}^{\infty}(1-e^{-t})^{v-n-u}W^mf(v)\binom{v-u+\alpha}{n+\alpha}\\
&=&e^{-tn}\displaystyle\sum_{u=0}^{\infty}(-1)^{u}\binom{\alpha-m}{u}\displaystyle\sum_{j=n}^{\infty}(1-e^{-t})^{j-n}W^mf(j+u)\binom{j+\alpha}{n+\alpha}\\
&=&e^{-tn}\displaystyle\sum_{j=n}^{\infty}(1-e^{-t})^{j-n}\binom{j+\alpha}{n+\alpha}\displaystyle\sum_{u=0}^{\infty}(-1)^{u}\binom{\alpha-m}{u}W^mf(j+u)\\
&=&e^{-tn}\displaystyle\sum_{j=n}^{\infty}(1-e^{-t})^{j-n}\binom{j+\alpha}{n+\alpha}W^{\alpha}f(j),\\
\end{eqnarray*}
and we conclude the equality.

As $\displaystyle{k^{\alpha+1}(n){j+\alpha \choose n+\alpha}=k^{\alpha+1}(j){j \choose n} }$ with $j\ge n$ and $\alpha \ge 0$, we have that
\begin{eqnarray*}
&\,&k^{\alpha+1}(n)W^{\alpha}(S(t)f)(n)=e^{-tn}\displaystyle\sum_{j=n}^{\infty}(1-e^{-t})^{j-n}k^{\alpha+1}(n)\binom{j+\alpha}{n+\alpha}W^{\alpha}f(j)\cr
&\,&\quad=e^{-tn}\displaystyle\sum_{j=n}^{\infty}\binom{j}{n}(1-e^{-t})^{j-n}k^{\alpha+1}(j)W^{\alpha}f(j)
=S(t)(k^{\alpha+1}W^{\alpha}f)(n)
\end{eqnarray*}
for $n\in \N_0$ and the proof is finished.\end{proof}

\begin{lemma}\label{WeylSemT} Let $\alpha\geq 0$ and $f\in c_{0,0}.$ The the following equality holds \begin{equation}\label{eq1}W^{\alpha}T(t)f(n)=e^{-t\alpha}T(t)W^{\alpha}f(n),\quad t\geq 0,\,n\in\N_0.\end{equation}
\end{lemma}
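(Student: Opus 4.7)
My plan follows the two-step structure of Lemma \ref{WeylSem}: first establish the identity for integer $\alpha$ by induction, then settle the fractional case using the explicit Weyl formula from Theorem \ref{quivalent} together with the kernel identity of Theorem \ref{llave}. The case $\alpha=0$ is trivial.

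For $\alpha=1$, I would start by expanding $WT(t)f(n)=T(t)f(n)-T(t)f(n+1)$ and reshaping $T(t)f(n+1)$ via the Pascal identity $\binom{n+1}{j}=\binom{n}{j}+\binom{n}{j-1}$ together with the factorisation $(1-e^{-t})^{n+1-j}=(1-e^{-t})(1-e^{-t})^{n-j}$. A shift of index yields $T(t)f(n+1)=(1-e^{-t})T(t)f(n)+e^{-t}T(t)(\sigma f)(n)$, where $\sigma f(l):=f(l+1)$, so that $Wf=f-\sigma f$. Subtraction then gives $WT(t)f(n)=e^{-t}T(t)Wf(n)$, and the inductive step $W^{m+1}T(t)f=W(W^mT(t)f)=W(e^{-tm}T(t)W^mf)=e^{-t(m+1)}T(t)W^{m+1}f$ closes the integer case.

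For $\alpha\in\R_+\setminus\N$, I would apply Theorem \ref{quivalent} with $g=T(t)f$ to write $W^\alpha T(t)f(n)=\sum_{u=n}^\infty k^{-\alpha}(u-n)T(t)f(u)$, substitute the definition of $T(t)f(u)$, and swap summations. Absolute convergence is ensured because $f\in c_{0,0}$ forces $T(t)f(u)$ to decay exponentially in $u$, while $k^{-\alpha}(u-n)$ is only polynomially bounded. After the swap,
$$W^\alpha T(t)f(n)=\sum_{j=0}^\infty f(j)\,e^{-tj}\sum_{u=\max(j,n)}^\infty \binom{u}{j}k^{-\alpha}(u-n)(1-e^{-t})^{u-j}.$$
The inner sum is exactly the left-hand side of Theorem \ref{llave} after transposing the roles of the two integer parameters, so it evaluates to $e^{-t\alpha}\sum_{p=0}^{\min(j,n)}\binom{n}{p}k^{-\alpha}(j-p)e^{-tp}(1-e^{-t})^{n-p}$. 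A second interchange of summations — legitimate because $f$ has finite support — pulls the sum over $p\in\{0,\dots,n\}$ outside and leaves $\sum_{j=p}^\infty k^{-\alpha}(j-p)f(j)=W^\alpha f(p)$ inside, by Theorem \ref{quivalent} again. The result is $W^\alpha T(t)f(n)=e^{-t\alpha}T(t)(W^\alpha f)(n)$, as required.

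The main obstacle is the correct alignment of indices in the application of Theorem \ref{llave}: the parameters $(u,j)$ in the statement of that theorem play the roles of $(j,u)$ here, and the cut-offs $\max(j,n)$ and $\min(j,n)$ have to be tracked carefully through both summation swaps. The analytic justifications are routine once the finite support of $f$ is exploited; the combinatorial bookkeeping is where a mis-step would most easily occur.
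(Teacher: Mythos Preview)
Your proposal is correct and follows essentially the same route as the paper: induction for integer $\alpha$ via the Pascal identity, then the fractional case via Theorem~\ref{quivalent} combined with the kernel identity of Theorem~\ref{llave}, with the same two Fubini swaps. Your packaging of the base case through the shift-operator identity $T(t)f(n+1)=(1-e^{-t})T(t)f(n)+e^{-t}T(t)(\sigma f)(n)$ is a slightly cleaner presentation of exactly the computation the paper carries out.
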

\begin{proof}
First we suppose that $\alpha=1.$ Then \begin{eqnarray*}
W T(t)f(n)&=& \displaystyle\sum_{j=0}^{n}\binom{n}{j}e^{-tj}(1-e^{-t})^{n-j}f(j)-e^{-t}\displaystyle\sum_{j=0}^{n+1}\binom{n+1}{j}e^{-tj}(1-e^{-t})^{n+1-j}f(j)\\
&=&\displaystyle\sum_{j=0}^{n}\binom{n}{j}e^{-tj}(1-e^{-t})^{n-j}f(j)-(1-e^{-t})^{n+1}f(0)\\
&&-\displaystyle\sum_{j=1}^{n}\binom{n}{j}e^{-tj}(1-e^{-t})^{n+1-j}f(j) \\
&&-\displaystyle\sum_{j=1}^{n}\binom{n}{j-1}e^{-tj}(1-e^{-t})^{n+1-j}f(j)-e^{-t(n+1)}f(n+1),
\end{eqnarray*} where we have used that $\binom{n+1}{j}=\binom{n}{j}+\binom{n}{j-1}.$ Note that $(1-e^{-t})^{n+1-j}=(1-e^{-t})^{n-j}-e^{-t}(1-e^{-t})^{n-j},$ and then we get \begin{eqnarray*}
W T(t)f(n)&=& \displaystyle\sum_{j=0}^{n}\binom{n}{j}e^{-tj}(1-e^{-t})^{n-j}f(j)-(1-e^{-t})^{n+1}f(0)\\
&&-\displaystyle\sum_{j=1}^{n}\binom{n}{j}e^{-tj}(1-e^{-t})^{n-j}f(j)+\displaystyle\sum_{j=1}^{n}\binom{n}{j}e^{-t(j+1)}(1-e^{-t})^{n-j}f(j)\\
&&-\displaystyle\sum_{j=1}^{n}\binom{n}{j-1}e^{-tj}(1-e^{-t})^{n+1-j}f(j)-e^{-t(n+1)}f(n+1)\\
&=&(1-e^{-t})^{n}f(0)-(1-e^{-t})^{n+1}f(0)+\displaystyle\sum_{j=1}^{n}\binom{n}{j}e^{-t(j+1)}(1-e^{-t})^{n-j}f(j)\\
&&-\displaystyle\sum_{j=0}^{n-1}\binom{n}{j}e^{-t(j+1)}(1-e^{-t})^{n-j}f(j+1)-e^{-t(n+1)}f(n+1)\\
&=&\displaystyle\sum_{j=0}^{n}\binom{n}{j}e^{-t(j+1)}(1-e^{-t})^{n-j}W f(j)=e^{-t}T(t)W f(n),
\end{eqnarray*}
where we have done a change of variable.

Now we proceed by induction. Let $m\in\N,$ and we suppose \eqref{eq1} true for $\alpha\in\N$ with $\alpha<m.$ Then one gets  \begin{eqnarray*}
W^m T(t)f(n)&=& \displaystyle\sum_{j=0}^{n}\binom{n}{j}e^{-t(j+m-1)}(1-e^{-t})^{n-j}W^{m-1}f(j)\\
&&-\displaystyle\sum_{j=0}^{n+1}\binom{n+1}{j}e^{-t(j+m-1)}(1-e^{-t})^{n+1-j}W^{m-1}f(j)\\
&=&\displaystyle\sum_{j=0}^{n}\binom{n}{j}e^{-t(j+m-1)}(1-e^{-t})^{n-j}W^{m-1}f(j)-e^{-t(m-1)}(1-e^{-t})^{n+1}W^{m-1}f(0)\\
&&-\displaystyle\sum_{j=1}^{n}\binom{n}{j}e^{-t(j+m-1)}(1-e^{-t})^{n+1-j}W^{m-1}f(j)\\
&&-\displaystyle\sum_{j=1}^{n}\binom{n}{j-1}e^{-t(j+m-1)}(1-e^{-t})^{n+1-j}W^{m-1}f(j)-e^{-t(n+m)}W^{m-1}f(n+1)\\
&=&\displaystyle\sum_{j=0}^{n}\binom{n}{j}e^{-t(j+m-1)}(1-e^{-t})^{n-j}W^{m-1}f(j)-e^{-t(m-1)}(1-e^{-t})^{n+1}W^{m-1}f(0)\\
&&-\displaystyle\sum_{j=1}^{n}\binom{n}{j}e^{-t(j+m-1)}(1-e^{-t})^{n-j}W^{m-1}f(j)\\
&&+\displaystyle\sum_{j=1}^{n}\binom{n}{j}e^{-t(j+m)}(1-e^{-t})^{n-j}W^{m-1}f(j)\\
&&-\displaystyle\sum_{j=1}^{n}\binom{n}{j-1}e^{-t(j+m-1)}(1-e^{-t})^{n+1-j}W^{m-1}f(j)-e^{-t(n+m)}W^{m-1}f(n+1)\\
&=&e^{-t(m-1)}(1-e^{-t})^{n}W^{m-1}f(0)-e^{-t(m-1)}(1-e^{-t})^{n+1}W^{m-1}f(0)\\
&&+\displaystyle\sum_{j=1}^{n}\binom{n}{j}e^{-t(j+m)}(1-e^{-t})^{n-j}W^{m-1}f(j)\\
&&-\displaystyle\sum_{j=0}^{n-1}\binom{n}{j}e^{-t(j+m)}(1-e^{-t})^{n-j}W^{m-1}f(j+1)-e^{-t(n+m)}W^{m-1}f(n+1)\\
&=&e^{-tm}\displaystyle\sum_{j=0}^{n}\binom{n}{j}e^{-tj}(1-e^{-t})^{n-j}W^{m}f(j)=e^{-tm}T(t)W^{m}f(n).
\end{eqnarray*}

Finally we prove the case that $\alpha \in \R^{+}\backslash (\N\cup\{0\})$. By Theorem \ref{quivalent} and Theorem \ref{llave} we write \begin{eqnarray*}
&\,&W^{\alpha}T(t)f(n)=\sum_{j=n}^{\infty}k^{-\alpha}(j-n)T(t)f(j)=\sum_{j=n}^{\infty}k^{-\alpha}(j-n)\sum_{u=0}^j{j\choose u}e^{-tu}(1-e^{-t})^{j-u}f(u)\\
&\,&=\sum_{u=0}^{\infty}f(u)e^{-tu}\sum_{j=\max\{u,n\}}^\infty {j\choose u}k^{-\alpha}(j-n)(1-e^{-t})^{j-u}\\
&\,&=\sum_{u=0}^{\infty}f(u)e^{-t\alpha}\sum_{j=0}^{\min\{u,n\}}{n\choose j} k^{-\alpha}(u-j)e^{-tj}(1-e^{-t})^{n-j}\\
&\,&=\sum_{j=0}^{n}{n\choose j}e^{-t(\alpha+j)}(1-e^{-t})^{n-j}\sum_{u=j}^{\infty} k^{-\alpha}(u-j)f(u)\\
&\,&=\sum_{j=0}^{n}{n\choose j}e^{-t(\alpha+j)}(1-e^{-t})^{n-j}W^\alpha f(j)=e^{-t\alpha}T(t)W^{\alpha}f(n),
\end{eqnarray*}
and we conclude the result.
\end{proof}

\section{Sobolev-Lebesgue sequence spaces}\label{sect2}

In this section we introduce a family of subspaces $\tau_p^\alpha$ which are contained in $\ell^p$ for $\alpha \ge 0$ and $1\le p\le \infty$. Note that this definition includes the usual Lebesgue sequence space $\ell^p$ as limit case for  $\alpha=0$. For $p=1$ these spaces have been considered in \cite[Theorem 2.11]{ALMV} and \cite[Section 2]{A}.

\begin{definition}\label{taualphasp}
For $\alpha>0$ and $1\leq p<\infty$ let $\tau_p^{\alpha}$ be the Banach space formed by the set of complex sequences vanishing at infinity such that the norm
$$\|f\|_{\alpha,p}:=\left(\sum_{n=0}^\infty |W^\alpha f(n)|^p(k^{\alpha+1}(n))^p\right)^{1/p}$$ converges. For $p=\infty,$ we denote by $\tau_{\infty}^{\alpha}$ the Banach space obtained as the complex functions vanishing at infinity such that the norm
$$\|f\|_{\alpha,\infty}:=\displaystyle\sup_{n\in\N_0}|W^\alpha f(n)|k^{\alpha+1}(n)$$ converges.
\end{definition}

\begin{theorem}\label{propiedades} Let $\alpha>0$ and $1\leq p\le \infty.$ Then \begin{enumerate}

\item[(i)] The operator $D^{\alpha}:\tau_p^{\alpha}\to\ell^p$ defined by $$D^{\alpha}f(n):=k^{\alpha+1}(n)W^{\alpha}f(n),\quad n\in\N_0,\, f\in \tau_p^{\alpha},$$ is an isometry whose inverse operator $(D^{\alpha})^{-1}: \ell^p\to  \tau_p^{\alpha}$ is given by
    $$(D^{\alpha})^{-1}f(n)=W^{-\alpha}((k^{\alpha+1})^{-1}f)(n),\quad n\in\N_0,\, f\in \ell^p.$$

 \item[(ii)]The following embeddings hold: $\tau_1^{\alpha}\hookrightarrow\tau_p^{\alpha}\hookrightarrow\tau_q^{\alpha}\hookrightarrow\tau_{\infty}^{\alpha},$ for $1<p<q<\infty.$

\item[(iii)] The following embeddings hold: $\tau_p^{\beta}\hookrightarrow\tau_p^{\alpha}\hookrightarrow\ell^p,$ for $\beta>\alpha>0$.

\item[(iv)] For $p\ge 1$ and $\frac{1}{p}+\frac{1}{p'}=1,$ (with $p'=\infty$ for $p=1$) the space $\tau_{p'}^{\alpha}$ is the dual space of $\tau_p^{\alpha},$ and  the duality is given by $$\langle f,g \rangle_{\alpha}:= \displaystyle\sum_{n=0}^{\infty} W^{\alpha}f(n)W^{\alpha}g(n)\left(k^{\alpha+1}(n)\right)^2, \quad f\in \tau_p^{\alpha},\, g\in \tau_{p'}^{\alpha}.$$
\end{enumerate}
\end{theorem}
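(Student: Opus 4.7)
The plan is to observe that the norm $\|\cdot\|_{\alpha,p}$ is engineered so that $D^\alpha$ becomes an isometry, transfer items (ii) and (iv) from their classical $\ell^p$ analogues through this isometry, and treat (iii) analytically using the dual Hardy inequality (\ref{hardyDual}). I expect (iii) to be the only step requiring genuine estimation; the remainder is essentially formal once (i) is in place.

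For (i), the identity $\|D^\alpha f\|_p = \|f\|_{\alpha,p}$ is immediate from Definition~\ref{taualphasp}, so $D^\alpha$ is isometric. To invert it, I set $E^\alpha g := W^{-\alpha}(g/k^{\alpha+1})$ and verify three things: the series
$$E^\alpha g(n) = \sum_{j=n}^\infty \frac{k^\alpha(j-n)}{k^{\alpha+1}(j)}\, g(j)$$
converges absolutely and defines an element of $\ell^p$ (this is exactly the Hardy computation performed in (iii) below); $E^\alpha g$ vanishes at infinity (it lies in $\ell^p$ for $p<\infty$, and for $p=\infty$ I would first argue on $c_{0,0}$ and pass to a limit); and $W^\alpha E^\alpha g = g/k^{\alpha+1}$, which reduces through Theorem~\ref{quivalent} and Fubini to the semigroup identity $k^{-\alpha}\ast k^\alpha = k^0 = \delta_0$. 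These give $D^\alpha E^\alpha = \mathrm{id}$, and symmetrically $E^\alpha D^\alpha = \mathrm{id}$.

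Parts (ii) and (iv) are then formal. In (ii), via the isometric identifications $\tau_p^\alpha \cong \ell^p$ of (i), the claim reduces to $\ell^1 \hookrightarrow \ell^p \hookrightarrow \ell^q \hookrightarrow \ell^\infty$ for $1<p<q<\infty$, which holds on the counting measure. In (iv), unraveling gives
$$\langle f, g \rangle_\alpha = \sum_{n=0}^\infty D^\alpha f(n)\, D^\alpha g(n),$$
so the pairing is the pullback of the standard $\ell^p$--$\ell^{p'}$ duality through $D^\alpha$; the identification $(\tau_p^\alpha)^* = \tau_{p'}^\alpha$ then follows from $(\ell^p)^* = \ell^{p'}$.

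The substantive step is (iii). For $\tau_p^\alpha \hookrightarrow \ell^p$, I write $f = W^{-\alpha} W^\alpha f$, that is,
$$f(n) = \sum_{j=n}^\infty \frac{k^\alpha(j-n)}{k^{\alpha+1}(j)}\, D^\alpha f(j),$$
use (\ref{double2}) to bound $k^\alpha(j-n)/k^{\alpha+1}(j) \le C(j-n)^{\alpha-1}/j^\alpha$ for $j>n\ge 1$, and apply (\ref{hardyDual}) with exponent $\alpha$. For $\tau_p^\beta \hookrightarrow \tau_p^\alpha$, I exploit $W^\alpha = W^{-(\beta-\alpha)} W^\beta$ (which follows from $k^{\beta-\alpha}\ast k^{-\beta} = k^{-\alpha}$) to obtain
$$D^\alpha f(n) = k^{\alpha+1}(n) \sum_{j=n}^\infty \frac{k^{\beta-\alpha}(j-n)}{k^{\beta+1}(j)}\, D^\beta f(j),$$
and then perform the asymptotic bookkeeping
$$k^{\alpha+1}(n)\, \frac{k^{\beta-\alpha}(j-n)}{k^{\beta+1}(j)} \le C \left(\frac{n}{j}\right)^{\!\alpha} \frac{(j-n)^{\beta-\alpha-1}}{j^{\beta-\alpha}} \le C\, \frac{(j-n)^{\beta-\alpha-1}}{j^{\beta-\alpha}},$$
where the last inequality uses $n\le j$; (\ref{hardyDual}) with exponent $\beta-\alpha$ then closes the estimate. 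The main obstacle I foresee is precisely this bookkeeping: the factor $n^\alpha$ created by $k^{\alpha+1}(n)$ outside the sum must be absorbed into the $j^\alpha$ hidden in $k^{\beta+1}(j)\sim j^\beta/\Gamma(\beta+1)$ so as to reduce the kernel to the homogeneous form demanded by Hardy. The small-index boundary terms ($n=0$ and $j=n$) have to be controlled by hand, since the asymptotic bounds only hold for large indices.
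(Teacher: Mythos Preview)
Your proposal is correct and follows essentially the same route as the paper: parts (i), (ii), (iv) are transferred from $\ell^p$ through the isometry $D^\alpha$, and part (iii) is obtained by writing $W^\alpha f=W^{-(\beta-\alpha)}W^\beta f$, replacing the Ces\`aro kernels by their power asymptotics via \eqref{double}, and invoking the dual Hardy inequality \eqref{hardyDual}. The only cosmetic differences are that the paper appeals directly to the monotonicity of $k^{\alpha+1}$ (rather than the equivalent bound $(n/j)^\alpha\le 1$) to pass the weight inside the sum, and that it treats the endpoints $p=1$ (by citing \cite{ALMV}) and $p=\infty$ separately, since \eqref{hardyDual} is stated only for $1\le p<\infty$; you should flag that the $p=\infty$ case of (iii) needs its own one-line supremum argument.
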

\begin{proof} (i) By definition, we have
\begin{equation*}
    \|D^{\alpha}f\|_p=\left(\sum_{n=0}^\infty (k^{\alpha+1}(n))^p|W^{\alpha}f(n)|^p\right)^{1/p}=\|f\|_{\alpha,p}.
\end{equation*}

Now, we use part (i) to show (ii). As $\ell^1\hookrightarrow \ell^p\hookrightarrow\ell^q\hookrightarrow\ell^\infty $, for $1<p<q<\infty,$ we have that
$$
\Vert f\Vert_{\alpha, \infty}=\Vert D^\alpha f\Vert_{\infty}\le \Vert D^\alpha f\Vert_{q}=\Vert  f\Vert_{\alpha,q}\le\Vert D^\alpha f\Vert_{p}=\Vert  f\Vert_{\alpha,p}\le \Vert D^\alpha f\Vert_{1}=\Vert f\Vert_{\alpha,1}.
$$

\noindent (iii) Let $f\in c_{0,0}$ and $0\le\alpha<\beta.$ For $p=1$ see \cite[Theorem 2.11]{ALMV}. For $p=\infty,$ \begin{eqnarray*}
\|f\|_{\alpha,\infty}&\leq& \sup_{n\in\N_0}\sum_{j=n}^\infty k^{\beta-\alpha}(j-n)|W^\beta f(n)|k^{\alpha+1}(n) \\
&\leq& \sup_{n\in\N_0}|W^\beta f(n)|\sum_{j=n}^\infty k^{\beta-\alpha}(j-n)k^{\alpha+1}(j)=\|f\|_{\alpha,\infty},
\end{eqnarray*} since $k^{\alpha+1}$ is increasing. Let $1<p<\infty,$ then
\begin{displaymath}
\begin{array}{l}
\displaystyle\|f\|_{\alpha,p}\le\left(|W^{\alpha}f(0)|^p+\sum_{n=1}^\infty \left(\sum_{j=n}^\infty k^{\beta-\alpha}(j-n)|W^\beta f(j)|k^{\alpha+1}(n)\right)^p\right)^{1/p}\\
\displaystyle\le C_{\beta, \alpha}\left(\left(\sum_{j=0}^{\infty}\frac{k^{\beta-\alpha}(j)}{k^{\beta+1}(j)}k^{\beta+1}(j)|W^{\beta}f(j)|\right)^p+\sum_{n=1}^\infty \left(\sum_{j=n}^\infty \frac{(j-n)^{\beta-\alpha-1}}{\Gamma(\beta-\alpha)}|W^\beta f(j)|k^{\alpha+1}(j)\right)^p\right)^{1/p}\\
\displaystyle\le C_{\beta, \alpha,p}\left(\|f\|_{\beta,p}^p +\sum_{n=1}^\infty \left(n^{\beta-\alpha}|W^\beta f(n)|k^{\alpha+1}(n)\right)^p\right)^{1/p}\\
\displaystyle\le C_{\beta, \alpha,p}\|f\|_{\beta,p},
\end{array}\end{displaymath}
where we have used \eqref{double}, H\"older's inequality, Hardy's inequality \eqref{hardyDual} and $k^{\alpha+1}$ is increasing.

\noindent (iv) Combine  that  the operator $D^\alpha$ is an isometry and $(\ell^p)'\cong \ell^{p'}$ for $ p\ge 1$.
\end{proof}

\begin{lemma}\label{belong-p} Let $\alpha\geq 0.$

\begin{itemize}

\item[(i)] For $1< p<\infty,$ the function $k^{\beta}\in \tau_p^{\alpha}$ if and only if $\Real\beta<1-\frac{1}{p}.$ Furthermore $k^{\beta}\in \tau_1^{\alpha}$ if and only if $\Real\beta<0$ or $\beta=0,$ and $k^{\beta}\in \tau_{\infty}^{\alpha}$ if and only if $\Real\beta\leq 1.$

    \item[(ii)] For $1\leq p<\infty$ and $\vert \lambda\vert >1$, the sequence $r_{\lambda}\in \tau_p^{\alpha}$ and
        $$
        \Vert r_\lambda\Vert_{\alpha, p}\le C_{\alpha, p}\left({\vert \lambda^p-\lambda^{p-1}\vert\over \vert \lambda\vert^p-1}\right)^{\alpha}{1\over (\vert \lambda\vert^p-1)^{1\over p}}.
        $$
        Finally $r_{\lambda}\in \tau_\infty^{\alpha}$ for $\vert \lambda\vert >1$
        and
        $$
         \Vert r_\lambda\Vert_{\alpha, \infty} \le\alpha^\alpha{\vert \lambda-1\vert^\alpha\over \vert \lambda\vert^{2\alpha+1}}.
        $$
\end{itemize}
\end{lemma}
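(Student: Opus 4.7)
The plan is to reduce both parts to explicit computation of $W^\alpha$ on the concrete sequences $k^\beta$ and $r_\lambda$ and then to asymptotic analysis of the resulting weights.

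For part (i), the first task is to compute $W^\alpha k^\beta(n)$. For $\alpha=m\in\mathbb{N}$, Example \ref{ex22}(ii) already gives $W^m k^\beta(n)=(-1)^m k^{\beta-m}(n+m)$. For non-integer $\alpha>0$ and $0<\Real\beta<1$, I would use Theorem \ref{quivalent} together with the Beta-integral identity
\[ \frac{\Gamma(n+l+\beta)}{\Gamma(n+l+1)}= \frac{1}{\Gamma(1-\beta)}\int_0^1 t^{n+l+\beta-1}(1-t)^{-\beta}\,dt, \]
swap sum and integral (Fubini), and apply the generating-function identity $\sum_l k^{-\alpha}(l)t^l=(1-t)^\alpha$ to arrive at
\[ W^\alpha k^\beta(n)=\frac{\Gamma(\alpha-\beta+1)}{\Gamma(\beta)\Gamma(1-\beta)}\cdot\frac{\Gamma(n+\beta)}{\Gamma(n+\alpha+1)}. \]
Multiplying by $k^{\alpha+1}(n)=\Gamma(n+\alpha+1)/(\Gamma(\alpha+1)\Gamma(n+1))$ collapses the $\Gamma(n+\alpha+1)$ factor, leaving a constant times $\Gamma(n+\beta)/\Gamma(n+1)$, which by \eqref{double3} is asymptotic to $n^{\Real\beta-1}$. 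Thus the $\ell^p$-summability of the sequence $(W^\alpha k^\beta(n) k^{\alpha+1}(n))_n$ reduces to convergence of $\sum_n n^{p(\Real\beta-1)}$, which holds iff $p(\Real\beta-1)<-1$, i.e., $\Real\beta<1-1/p$. The borderline cases are treated directly: for $p=\infty$ boundedness requires $\Real\beta\le 1$; the trivial case $\beta=0$ is handled by noting $k^0=e_0$ lies in every $\tau_p^\alpha$; and for $\Real\beta\le 0$ one extends by analytic continuation in $\beta$ or, alternatively, combines the integer formula with $W^\alpha=W^{\alpha-m}W^m$ and the same asymptotic input.

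For part (ii), Example \ref{ex22}(i) gives $W^\alpha r_\lambda=\bigl((\lambda-1)/\lambda\bigr)^\alpha r_\lambda$, so
\[ \|r_\lambda\|_{\alpha,p}^p=\frac{|\lambda-1|^{\alpha p}}{|\lambda|^{(\alpha+1)p}}\sum_{n=0}^\infty \bigl(k^{\alpha+1}(n)\bigr)^p |\lambda|^{-pn}. \]
Using the asymptotic comparison $(k^{\alpha+1}(n))^p\le C_{\alpha,p}\,k^{\alpha p+1}(n)$ (both sides grow like $n^{\alpha p}$, while $k^{\alpha+1}(0)=k^{\alpha p+1}(0)=1$, so the inequality holds up to a constant depending only on $\alpha,p$) together with the generating-function value $\sum_n k^{\alpha p+1}(n)r^n=(1-r)^{-(\alpha p+1)}$ for $r=|\lambda|^{-p}<1$, the factors combine into the claimed bound once one rewrites $1-|\lambda|^{-p}=(|\lambda|^p-1)/|\lambda|^p$ and observes $|\lambda^p-\lambda^{p-1}|=|\lambda|^{p-1}|\lambda-1|$. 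For $p=\infty$ the bound reduces to estimating $\sup_n k^{\alpha+1}(n)/|\lambda|^n$; treating $n$ as continuous and optimising $n^\alpha a^{-n}$ at $n=\alpha/\ln a$ produces the universal constant $\alpha^\alpha$ from the identity $\max_{x\ge 0}x^\alpha e^{-x}=\alpha^\alpha e^{-\alpha}$, after comparing $\ln|\lambda|$ with $|\lambda-1|/|\lambda|$.

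The main obstacle is the clean identification of $W^\alpha k^\beta$ for non-integer $\alpha$: the Fubini exchange requires absolute convergence (forcing a restriction $0<\Real\beta<1$), and extending the formula to $\Real\beta\le 0$ via analytic continuation in $\beta$ must be justified so that the asymptotic $\sim n^{\Real\beta-1}$ survives. A secondary difficulty is assembling the $p$-norm bound in part (ii) into the precise form stated; in particular, matching the constant $\alpha^\alpha$ in the $\tau_\infty^\alpha$-estimate requires a careful sharp bound for $\sup_n k^{\alpha+1}(n)|\lambda|^{-n}$ rather than a bare comparison with $n^\alpha$.
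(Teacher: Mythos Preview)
Your approach is correct, but for part (i) it takes a different and heavier route than the paper. The paper never computes $W^\alpha k^\beta$ for non-integer $\alpha$; instead it uses the scale of embeddings from Theorem~\ref{propiedades}(iii), $\tau_p^{m_2}\hookrightarrow \tau_p^{\alpha}\hookrightarrow \tau_p^{m_1}$ with $m_1=[\alpha]$ and $m_2=[\alpha]+1$, to sandwich the fractional case between two integer ones. Since the membership condition $\Real\beta<1-1/p$ turns out to be \emph{independent of $\alpha$}, verifying it for every $\alpha\in\N_0$ (via the elementary identity $W^m k^\beta(n)=(-1)^m k^{\beta-m}(n+m)$ of Example~\ref{ex22}(ii) and the asymptotics~\eqref{double2}) already settles it for all $\alpha\ge 0$. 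This sidesteps entirely the Beta-integral computation, the Fubini justification, and the analytic continuation in $\beta$ to $\Real\beta\le 0$ that you flag as the ``main obstacle''. Your explicit formula $W^\alpha k^\beta(n)=\dfrac{\Gamma(\alpha-\beta+1)}{\Gamma(\beta)\Gamma(1-\beta)}\,\dfrac{\Gamma(n+\beta)}{\Gamma(n+\alpha+1)}$ is nonetheless correct and interesting in its own right; it simply is not needed here.

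For part (ii) both arguments start from $W^\alpha r_\lambda=((\lambda-1)/\lambda)^\alpha r_\lambda$ (Example~\ref{ex22}(i)); the paper's proof is a one-line ``similar to (i)'', whereas your derivation of the $\ell^p$-bound via $(k^{\alpha+1}(n))^p\le C_{\alpha,p}\,k^{\alpha p+1}(n)$ and the generating function is essentially the computation that has to underlie the stated estimate, so you are just being more explicit. One caution: the precise constant $\alpha^\alpha$ in the $\tau_\infty^\alpha$-bound does not follow from the comparison $k^{\alpha+1}(n)\sim n^\alpha/\Gamma(\alpha+1)$ together with $\max_{x\ge 0} x^\alpha e^{-x}=\alpha^\alpha e^{-\alpha}$ alone (indeed, for $\alpha=1$, $\lambda=2$ the left side is $1/4$ while the stated right side is $1/8$), so treat that inequality as an estimate up to a universal constant rather than trying to recover the exact factor.
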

\begin{proof}
(i) Let $1<p<\infty.$ First note that for $\Real\beta\geq 1,$ $k^{\beta}\notin\ell^p$ by the estimation \eqref{double2}, and then $k^{\beta}\notin\tau_p^{\alpha}.$ Secondly observe that for $\beta\in\{0,-1,-2,\ldots\},$ $k^{\beta}\in c_{0,0},$ then $k^{\beta}\in\tau_p^{\alpha}.$ Finally for $\Real\beta< 1$ with $\beta\notin\{0,-1,-2,\ldots\},$ using Proposition \ref{propiedades} (iii) it is enough to prove the result for $\alpha\in\N_0.$ Let $\alpha\in\N_0,$ then $$\displaystyle\sum_{n=0}^{\infty}|W^{\alpha}k^{\beta}(n)|^p(k^{\alpha+1}(n))^p=\displaystyle\sum_{n=0}^{\infty}|k^{\beta-\alpha}(n+\alpha)|^p(k^{\alpha+1}(n))^p<\infty$$ if and only if $\Real\beta<1-\frac{1}{p},$ where we have applied Example \ref{ex22} (ii) and \eqref{double2}. The cases $p=1$ and $p=\infty$ use the same arguments than the previous one.
(ii) It is similar to the case (i) using Example \ref{ex22} (i).
\end{proof}

\begin{theorem}\label{propiedades2} Let $\alpha>0$ and $1\leq p\le\infty.$ Then
$$
\Vert f\ast g\Vert_{\alpha, p}\le C_{\alpha,p}\Vert f\Vert_{ \alpha,p}\Vert g\Vert_{ \alpha,1}, \quad f\in \tau_p^{\alpha}, \ g \in \tau_1^{\alpha}.
$$
\end{theorem}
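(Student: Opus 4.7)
The plan is to transfer the claimed estimate to a bilinear $\ell^p$-bound via the isometry $D^\alpha\colon \tau_p^\alpha \to \ell^p$ of Theorem \ref{propiedades}(i), reducing the question to controlling $D^\alpha((D^\alpha)^{-1}F \ast (D^\alpha)^{-1}G)$ on $\ell^p\times\ell^1$. By appropriate density arguments ($c_{0,0}$ being dense in $\tau_p^\alpha$ for each $1\leq p\leq\infty$ via $D^\alpha$), I may assume $f,g\in c_{0,0}$ throughout, which makes all manipulations and interchanges of summation legitimate.

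The crucial first step is to expand $W^\alpha(f*g)(n)$ explicitly. Applying Theorem \ref{quivalent} and interchanging summations in $\sum_{m\geq n} k^{-\alpha}(m-n)\sum_{j=0}^m f(m-j)g(j)$, separating the ranges $j\leq n$ and $j>n$, produces
\[
W^\alpha(f*g)(n) = (W^\alpha f \ast g)(n) + \sum_{j>n} g(j)\sum_{l\geq 0} k^{-\alpha}(l+j-n)\,f(l),
\]
with the first summand a standard forward convolution and the second a \emph{tail correction} arising because the semi-infinite Weyl difference reaches past the convolution horizon. After multiplying by $k^{\alpha+1}(n)$, each piece will be estimated in $\ell^p$ separately.

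For the principal piece, setting $F=D^\alpha f$, $G=D^\alpha g$ and substituting $g=W^{-\alpha}((k^{\alpha+1})^{-1}G)$, a swap of summation exposes a kernel of the form $k^{\alpha+1}(n)k^\alpha(l-j)/[k^{\alpha+1}(n-j)k^{\alpha+1}(l)]$. This will be handled using the asymptotic $k^{\alpha+1}(n)\sim n^\alpha/\Gamma(\alpha+1)$ (splitting the summation in $j$ at $n/2$) combined with Lemma \ref{lemma2.2} and Young's convolution inequality. For the tail piece, the symmetric substitution $f=W^{-\alpha}((k^{\alpha+1})^{-1}F)$ followed by interchanging sums yields an iterated expression whose inner $\ell^q$-norm is precisely of the type controlled by Lemma \ref{lemma2.2}; H\"older's inequality combined with the dual Hardy inequality \eqref{hardyDual} then produces the required bound in terms of $\|F\|_p\|G\|_1$.

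The main obstacle will be controlling the tail term: a crude pointwise estimate produces the heavier norm $\|k^{2\alpha+1}W^\alpha g\|_1$, so the sharp form of Lemma \ref{lemma2.2} and a careful matching of exponents in Hardy's inequality \eqref{hardyDual} are essential to obtain the right dependence on $\|g\|_{\alpha,1}$ alone. The boundary case $p=1$ reduces directly to the Banach-algebra property of $\tau_1^\alpha$ established in \cite[Theorem 2.11]{ALMV}; the case $p=\infty$ can be handled by a parallel direct argument or by Riesz--Thorin interpolation between $p=1$ and an already-treated intermediate $p_0\in(1,\infty)$.
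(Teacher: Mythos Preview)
Your decomposition
\[
W^\alpha(f*g)(n) = (W^\alpha f * g)(n) + \sum_{j>n} g(j)\sum_{l\ge 0} k^{-\alpha}(l+j-n)f(l)
\]
is correct, but the plan to bound the two pieces in $\ell^p$ \emph{separately} cannot succeed: these pieces exhibit large cancellation, and neither is individually controlled by $\Vert f\Vert_{\alpha,p}\Vert g\Vert_{\alpha,1}$. Take the simplest test pair $f=\delta_0$ (so $W^\alpha f=\delta_0$ and $\Vert f\Vert_{\alpha,p}=1$) and $g=(D^\alpha)^{-1}\delta_N$, i.e.\ $g(j)=k^\alpha(N-j)/k^{\alpha+1}(N)$ for $j\le N$, so that $\Vert g\Vert_{\alpha,1}=1$. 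Your ``principal piece'' after multiplication by $k^{\alpha+1}(n)$ is exactly
\[
k^{\alpha+1}(n)g(n)=\frac{k^{\alpha+1}(n)\,k^\alpha(N-n)}{k^{\alpha+1}(N)},\qquad 0\le n\le N,
\]
whose $\ell^\infty$-norm already behaves like $N^{\alpha-1}$ (evaluate near $n=N/2$ using \eqref{double}), hence is unbounded for every $\alpha>1$; the $\ell^p$-norms blow up whenever $\alpha>1-1/p$. A direct computation (using $k^\alpha*k^{-\alpha}=k^0$) shows the tail equals $-g(n)$ on $n<N$, so the two pieces cancel almost exactly to produce $k^{\alpha+1}W^\alpha(f*g)=\delta_N$. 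Thus no amount of sharpness in Lemma \ref{lemma2.2}, Young, or Hardy will save a term-by-term estimate; the obstruction is structural, not technical, and it hits the principal term, not only the tail as you anticipated.

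The paper sidesteps this by starting from a finer Leibniz-type identity for $W^\alpha(f*g)$ (quoted from \cite[Lemma 2.7]{ALMV}, see also \cite[Lemma 4.4]{Gale}) in which \emph{both} factors already appear through $W^\alpha f$ and $W^\alpha g$:
\[
W^\alpha(f*g)(n)=\sum_{j=0}^n W^\alpha g(j)\sum_{u=n-j}^n k^\alpha(u-n+j)W^\alpha f(u)-\sum_{j>n} W^\alpha g(j)\sum_{u>n} k^\alpha(u-n+j)W^\alpha f(u).
\]
Because the weights $k^{\alpha+1}$ are now matched to $W^\alpha g$ and $W^\alpha f$ from the outset, each of these two sums \emph{is} individually dominated by $C_{\alpha,p}\Vert f\Vert_{\alpha,p}\Vert g\Vert_{\alpha,1}$, and the paper carries this out with Minkowski-type inequalities, H\"older, Lemma \ref{lemma2.2} and the dual Hardy inequality \eqref{hardyDual}, after further splitting the first sum at $n=2j$. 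If you want to salvage your route, you would need to recombine your principal and tail terms into something equivalent to this identity before passing to absolute values.
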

\begin{proof}
Let $f\in\tau_p^{\alpha}$ and $g\in\tau_1^{\alpha}$. The case $p=1$ was proved in \cite[Theorem 2.10]{ALMV}. Let us consider $p>1$ and $q>1$ such that $\frac{1}{p}+\frac{1}{q}=1$. By \cite[Lemma 2.7]{ALMV} (see also \cite[Lemma 4.4]{Gale}),
\begin{equation*}
    W^\alpha(f*g)(n)=\sum_{j=0}^n W^\alpha g(j)\sum_{u=n-j}^n k^\alpha(u-n+j)W^\alpha f(u) - \sum_{j=n+1}^\infty W^\alpha g(j)\sum_{u=n+1}^\infty k^\alpha(u-n+j)W^\alpha f(u).
\end{equation*}
Then, by Minkwoski's inequality (\cite[Theorem 25, p.31]{HLP}),
\begin{eqnarray}\label{ineq}
    \|f*g\|_{\alpha,p}&=& \left(\sum_{n=0}^\infty |W^\alpha(f*g)(n)|^p (k^{\alpha+1}(n))^p\right)^{1/p}\nonumber\\
    &\le& \left(\sum_{n=0}^\infty \left(\sum_{j=0}^n |W^\alpha g(j)|\sum_{u=n-j}^n k^\alpha(u-n+j)|W^\alpha f(u)|\right)^p (k^{\alpha+1}(n))^p\right)^{1/p} \nonumber\\
    &&+ \left(\sum_{n=0}^\infty \left(\sum_{j=n+1}^\infty |W^\alpha g(j)|\sum_{u=n+1}^\infty k^\alpha(u-n+j)|W^\alpha f(u)|\right)^p (k^{\alpha+1}(n))^p\right)^{1/p}.
\end{eqnarray}
Now, we apply a inequality of Minkwoski type (\cite[Theorem 165, p.123]{HLP}) to the second addend of (\ref{ineq}) and we obtain
\begin{eqnarray*}
    &&\left(\sum_{n=0}^\infty (k^{\alpha+1}(n))^p\left(\sum_{j=n+1}^\infty |W^\alpha g(j)|\sum_{u=n+1}^\infty k^\alpha(u-n+j)|W^\alpha f(u)|\right)^p \right)^{1/p}\\
    &&\le \sum_{j=1}^\infty |W^\alpha g(j)|\left(\sum_{n=0}^{j-1}(k^{\alpha+1}(n))^p \left(\sum_{u=n+1}^\infty k^\alpha(u-n+j)|W^\alpha f(u)|\right)^p \right)^{1/p}.
\end{eqnarray*}
By using H\"older's inequality (\cite[Theorem 13, p.24]{HLP}),
\begin{equation*}
    \sum_{u=n+1}^\infty k^\alpha(u-n+j)|W^\alpha f(u)|\le \left(\sum_{u=n+1}^\infty \left(\frac{k^\alpha(u-n+j)}{k^{\alpha+1}(u)}\right)^q\right)^{1/q}
    \left(\sum_{u=n+1}^\infty \left(k^{\alpha+1}(u)|W^\alpha f(u)|\right)^p\right)^{1/p},
\end{equation*}
where
$$\left(\sum_{u=n+1}^\infty \left(k^{\alpha+1}(u)|W^\alpha f(u)|\right)^p\right)^{1/p}\le \|f\|_{\alpha,p}.$$
By Lemma \ref{lemma2.2} with $0\leq n<j$ and \eqref{double} one gets
\begin{equation*}
    \sum_{u=n+1}^\infty k^\alpha(u-n+j)|W^\alpha f(u)|\leq C_{\alpha,p}\|f\|_{\alpha,p}\frac{k^{\alpha+1}(j)}{k^{\alpha+1}(n)}\frac{1}{j^{1/p}},
\end{equation*}
and then
\begin{eqnarray*}
    &&\left(\sum_{n=0}^{j-1}(k^{\alpha+1}(n))^p \left(\sum_{u=n+1}^\infty k^\alpha(u-n+j)|W^\alpha f(u)|\right)^p \right)^{1/p}\\
    &&\leq C_{\alpha,p}\|f\|_{\alpha,p}k^{\alpha+1}(j)\left(\sum_{n=0}^{j-1}\frac{1}{j}\right)^{1/p}=C_{\alpha,p}\|f\|_{\alpha,p}k^{\alpha+1}(j).
\end{eqnarray*}

Again, applying a inequality of Minkwoski type to the first addend of (\ref{ineq}) we have
\begin{eqnarray*}
    &&\left(\sum_{n=0}^\infty (k^{\alpha+1}(n))^p\left(\sum_{j=0}^n |W^\alpha g(j)|\sum_{u=n-j}^n k^\alpha(u-n+j)|W^\alpha f(u)|\right)^p \right)^{1/p}\\
    &\le& \sum_{j=0}^\infty |W^\alpha g(j)|\left(\sum_{n=j}^\infty(k^{\alpha+1}(n))^p \left(\sum_{u=n-j}^n k^\alpha(u-n+j)|W^\alpha f(u)|\right)^p \right)^{1/p}\\
    &\le& \sum_{j=0}^\infty |W^\alpha g(j)|\left[\left(\sum_{n=j}^{2j}(k^{\alpha+1}(n))^p \left(\sum_{u=n-j}^n k^\alpha(u-n+j)|W^\alpha f(u)|\right)^p \right)^{1/p}\right.\\
    &&+ \left.\left(\sum_{n=2j+1}^{\infty}(k^{\alpha+1}(n))^p \left(\sum_{u=n-j}^n k^\alpha(u-n+j)|W^\alpha f(u)|\right)^p \right)^{1/p}\right].
\end{eqnarray*}
For $j\le n\le 2j$ we write
\begin{eqnarray*}
    &&\left(\sum_{n=j}^{2j}(k^{\alpha+1}(n))^p \left(\sum_{u=n-j}^n k^\alpha(u-n+j)|W^\alpha f(u)|\right)^p\right)^{1/p}\\
    &=& k^{\alpha+1}(2j)\left(\sum_{l=0}^{j} \left(\sum_{u=l}^{l+j} k^\alpha(u-l)|W^\alpha f(u)|\right)^p\right)^{1/p}\\
    &\le& k^{\alpha+1}(2j)\left(\left(\sum_{u=0}^\infty k^\alpha(u)|W^\alpha f(u)|\right)^p+\sum_{l=1}^{\infty} \left(\sum_{u=l}^\infty k^\alpha(u-l)|W^\alpha f(u)|\right)^p\right)^{1/p}\\
    &\le& C_\alpha k^{\alpha+1}(j)\left(\left(\sum_{u=0}^\infty \frac{k^\alpha(u)}{k^{\alpha+1}(u)}k^{\alpha+1}(u)|W^\alpha f(u)|\right)^p\right.\\
    &&\left.+C_{\alpha,p}\sum_{l=1}^{\infty} \left(\sum_{u=l}^\infty \frac{(u-l)^{\alpha-1}}{u^{\alpha}}u^{\alpha}|W^\alpha f(u)|\right)^p\right)^{1/p}\\
    &\le&  C_{\alpha,p} k^{\alpha+1}(j)\|f\|_{\alpha,p},
\end{eqnarray*}
by using \cite[Lemma 2.1]{ALMV}, H\"older's inequality, \eqref{double} and Hardy's inequality \eqref{hardyDual}. For $n>2j$,
\begin{eqnarray*}
    &&\left(\sum_{n=2j+1}^{\infty}(k^{\alpha+1}(n))^p \left(\sum_{u=n-j}^n k^\alpha(u-n+j)|W^\alpha f(u)|\right)^p\right)^{1/p}\\
    &=& \left(\sum_{l=j+1}^{\infty}(k^{\alpha+1}(l+j))^p \left(\sum_{u=l}^{l+j} k^\alpha(u-l)|W^\alpha f(u)|\right)^p\right)^{1/p}\\
    &\le& \left(\sum_{l=j+1}^{\infty}(k^{\alpha+1}(2l))^p \left(\sum_{u=l}^{l+j} k^\alpha(u-l)|W^\alpha f(u)|\right)^p\right)^{1/p}\\
    &\le& C_\alpha\left(  \sum_{l=j+1}^{\infty}(k^{\alpha+1}(l))^p \left(\sum_{m=0}^{j} k^\alpha(m)|W^\alpha f(m+l)|\right)^p\right)^{1/p}\\
    &\le& C_\alpha \sum_{m=0}^{j}k^\alpha(m) \left(\sum_{l=j+1}^{\infty} (k^{\alpha+1}(l))^p|W^\alpha f(m+l)|^p\right)^{1/p}\\
    &\le& C_\alpha  \sum_{m=0}^{j}k^\alpha(m) \left(\sum_{l=j+1}^{\infty} (k^{\alpha+1}(m+l))^p|W^\alpha f(m+l)|^p\right)^{1/p}\\
    &\le& C_\alpha\|f\|_{\alpha,p} k^{\alpha+1}(j),
\end{eqnarray*}
by using \cite[Lemma 2.1]{ALMV} and a Minkwoski type inequality. So, we conclude the result.
\end{proof}

\section{$C_0$-Semigroups on $\tau_p^{\alpha}$}\label{sect5}

In this section we study, for $1\leq p\leq \infty,$ the $C_0$-semigroup structure of the two one-parameter operator families on the sequence spaces $\tau^{\alpha}_p$, $(T_p(t))_{t\geq 0}$ and $(S_p(t))_{t\geq 0},$ given by $$T_p(t)f(n):=e^{-\frac{t}{p}}T(t)f(n),\quad S_p(t)f(n):=e^{-t(1-\frac{1}{p})}S(t)f(n),\quad  n\in\N_0.$$

\begin{theorem} Take $1\leq p\leq\infty$ and $\alpha\geq 0.$ The one-parameter operator families $(T_p(t))_{t\geq 0}$ and $(S_p(t))_{t\geq 0}$ are contraction $C_0$-semigroups on $\tau_p^{\alpha},$ whose generators $A$ and $B$ respectively are given by $$Af(0):=-\frac{1}{p}f(0),\ Af(n):=-n\Delta f(n-1)-\frac{1}{p}f(n),\quad n\in \N,$$ $$Bf(n):=(n+1)\Delta f(n)+\frac{1}{p}f(n),\quad n\in\N_0,$$ with $f\in D(A)=D(B)=\tau_p^{\alpha+1}.$

The $C_0$-semigroup $(S_p(t))_{t\geq 0}$ intertwines with the operator $D^\alpha$ given in Theorem \ref{propiedades} (i), i.e., $S_p(t) \circ D^\alpha= D^{\alpha} \circ S_p(t)$ for $t\ge 0$.
\end{theorem}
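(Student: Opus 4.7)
The semigroup property $T_p(t+s)=T_p(t)T_p(s)$ and $S_p(t+s)=S_p(t)S_p(s)$ is inherited from the semigroup property of $(T(t))$ and $(S(t))$, noted at the start of Section \ref{sect5}, together with the multiplicativity of the scalar factors $e^{-t/p}$ and $e^{-t(1-1/p)}$. For the contractivity of $(S_p(t))$ on $\tau_p^{\alpha}$, Lemma \ref{WeylSem} rewrites as $D^{\alpha}\circ S(t)=S(t)\circ D^{\alpha}$, and hence $D^{\alpha}\circ S_p(t)=S_p(t)\circ D^{\alpha}$; this not only establishes the intertwining claimed at the end of the statement, but also reduces the problem via the isometry $D^{\alpha}:\tau_p^{\alpha}\to\ell^p$ to contractivity of $e^{-t(1-1/p)}S(t)$ on $\ell^p$. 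Direct computation gives the column sum $\sum_{n=0}^{j}e^{-tn}\binom{j}{n}(1-e^{-t})^{j-n}=1$ and the row sum $e^{-tn}\sum_{j\ge n}\binom{j}{n}(1-e^{-t})^{j-n}=e^{t}$, so $\|S(t)\|_{\ell^1\to\ell^1}=1$ and $\|S(t)\|_{\ell^{\infty}\to\ell^{\infty}}=e^t$; Riesz--Thorin then gives $\|S(t)\|_{\ell^p\to\ell^p}\le e^{t(1-1/p)}$.

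The case of $(T_p(t))$ is more delicate because Lemma \ref{WeylSemT} intertwines $T(t)$ with $W^{\alpha}$ rather than with $D^{\alpha}$. Using the identity $\binom{n}{j}k^{\alpha+1}(n)/k^{\alpha+1}(j)=\binom{n+\alpha}{n-j}$, the operator $V(t):=D^{\alpha}T(t)(D^{\alpha})^{-1}$ on $\ell^p$ has kernel $K(n,j)=e^{-t\alpha}\binom{n+\alpha}{n-j}e^{-tj}(1-e^{-t})^{n-j}$. The generating identity (\ref{generating}) yields column sum $e^t$, so $\|V(t)\|_{\ell^1\to\ell^1}=e^t$. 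For the row sum, setting $x=e^t-1$,
\[
\sum_{j=0}^{n}K(n,j)=\frac{1}{(1+x)^{n+\alpha}}\sum_{m=0}^{n}\binom{n+\alpha}{m}x^m\le 1,
\]
where the inequality comes from Taylor's theorem for $f(x)=(1+x)^{n+\alpha}$: the integral remainder $(n+1)\binom{n+\alpha}{n+1}\int_0^x(x-s)^n(1+s)^{\alpha-1}\,ds$ is nonnegative for $\alpha>0$. Hence $\|V(t)\|_{\ell^{\infty}\to\ell^{\infty}}\le 1$, Riesz--Thorin gives $\|V(t)\|_{\ell^p\to\ell^p}\le e^{t/p}$, and the factor $e^{-t/p}$ makes $T_p(t)$ a contraction on $\tau_p^{\alpha}$ via $D^{\alpha}$.

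Strong continuity at $t=0^+$ holds on the dense subspace $c_{0,0}\subset\tau_p^{\alpha}$ by direct verification: for $f\in c_{0,0}$, also $W^{\alpha}f\in c_{0,0}$ by Definition \ref{WeylDifference}, the defining sums reduce to finite ones, and combining with the contractivity already proved extends strong continuity to all of $\tau_p^{\alpha}$. Differentiating the semigroup formulae at $t=0$, only $j\in\{n-1,n\}$ (for $T_p$) and $j\in\{n,n+1\}$ (for $S_p$) contribute to first order, producing exactly the stated expressions for $Af$ and $Bf$. To identify $D(B)=\tau_p^{\alpha+1}$, apply $W^{\alpha}$ to $Bf=(n+1)\Delta f(n)+f(n)/p$ and use Proposition \ref{leibniz} together with the identity $(n+\alpha+1)k^{\alpha+1}(n)=(\alpha+1)k^{\alpha+2}(n)$ to deduce
\[
D^{\alpha}(Bf)(n)=-(\alpha+1)D^{\alpha+1}f(n)+(\alpha+1/p)D^{\alpha}f(n);
\]
since $D^{\alpha}f\in\ell^p$ automatically, one has $Bf\in\tau_p^{\alpha}\Leftrightarrow D^{\alpha+1}f\in\ell^p\Leftrightarrow f\in\tau_p^{\alpha+1}$, and the case of $A$ is analogous. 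The main obstacle is the row-sum bound for $T_p(t)$: the absence of a clean $D^{\alpha}$-intertwining for $T(t)$ (only an intertwining with $W^{\alpha}$ via Lemma \ref{WeylSemT}) forces the positivity argument via Taylor's remainder for the fractional binomial series, whereas the corresponding step for $S_p(t)$ is immediate from Lemma \ref{WeylSem}.
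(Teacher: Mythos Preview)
Your proof is correct and follows essentially the same strategy as the paper: transfer to $\ell^p$ via the isometry $D^\alpha$, invoke Lemmata~\ref{WeylSem} and~\ref{WeylSemT} to obtain the kernel of $D^\alpha T(t)(D^\alpha)^{-1}$ and the intertwining for $S_p(t)$, compute row and column sums at the endpoints, and interpolate via Riesz--Thorin; the domain identification via Proposition~\ref{leibniz} is also the same. Your Taylor-remainder argument for the row-sum bound $\sum_{j=0}^n K(n,j)\le 1$ is in fact more careful than the paper's presentation, which writes the corresponding step as an equality (test $n=0$: one gets $e^{-t\alpha}<1$, so for $\alpha>0$ it is a strict inequality, exactly as your remainder shows); apart from this and your direct density argument for strong continuity in place of the paper's citation, the two proofs coincide in substance.
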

\begin{proof}
First of all we prove that $T_1(t)$ is a bounded operator on $\ell^1$ and $\ell^{\infty}.$ Let $f\in\ell^1$ and $t>0,$ then \begin{eqnarray*}
\displaystyle\sum_{n=0}^{\infty}|T_1(t)f(n)| &\leq & e^{-t}\displaystyle\sum_{j=0}^{\infty}e^{-tj}|f(j)|\sum_{n=j}^{\infty}\binom{n}{j}(1-e^{-t})^{n-j} \\
&=& e^{-t}\displaystyle\sum_{j=0}^{\infty}e^{-tj}|f(j)|\sum_{n=0}^{\infty}\binom{n+j}{j}(1-e^{-t})^{n}=\lVert f\rVert_1,
\end{eqnarray*}
where we have used \eqref{generating} (the case $t=0$ is easy). Now let $f\in \ell^{\infty},$ then $$\sup_{n\in\N_0}|T_1f(n)|\leq e^{-t}\lVert f\rVert_{\infty}\sup_{n\in\N_0}\displaystyle\sum_{j=0}^n\binom{n}{j}e^{-tj}(1-e^{-t})^{n-j}=e^{-t}\lVert f\rVert_{\infty}.$$ Applying the Riesz-Thorin Theorem we get that $\lVert T_1(t)\rVert_{p}\leq e^{-t\frac{p-1}{p}},$ and consequently $\lVert T_p(t)\rVert_{p}\leq 1.$

Similarly, for $\alpha>0,$ let $f\in\tau_1^{\alpha},$ then \begin{displaymath}\begin{array}{l}
\displaystyle\sum_{n=0}^{\infty}|W^{\alpha}T_1(t)f(n)|k^{\alpha+1}(n)\leq\displaystyle\sum_{n=0}^{\infty}e^{-t}k^{\alpha+1}(n)\displaystyle\sum_{j=0}^{n}\binom{n}{j}e^{-t(j+\alpha)}(1-e^{-t})^{n-j}|W^{\alpha}f(j)|\\
=\displaystyle e^{-t}\sum_{j=0}^{\infty} |W^{\alpha}f(j)|e^{-t(j+\alpha)}k^{\alpha+1}(j) \sum_{n=j}^{\infty}\binom{n+\alpha}{j+\alpha}(1-e^{-t})^{n}\\
=\displaystyle e^{-t}\sum_{j=0}^{\infty} |W^{\alpha}f(j)|e^{-t(j+\alpha)} k^{\alpha+1}(j) \sum_{n=0}^{\infty}\binom{n+j+\alpha}{j+\alpha}(1-e^{-t})^{n}=\lVert f \rVert_{\alpha,1},
\end{array}\end{displaymath}
where we have applied Lemma \ref{WeylSemT}, and the identities $k^{\alpha+1}(n)\binom{n}{j}=k^{\alpha+1}(j)\binom{n+\alpha}{j+\alpha}$ and \eqref{generating} (the case $t=0$ is easy). Now let $f\in \tau_{\infty}^{\alpha}$ and $t>0,$ then \begin{displaymath}\begin{array}{l}
\sup_{n\in\N_0}|W^{\alpha}T_1(t)f(n)|k^{\alpha+1}(n) = \sup_{n\in\N_0}e^{-t}|\displaystyle\sum_{j=0}^{n}\binom{n}{j}e^{-t(j+\alpha)}(1-e^{-t})^{n-j}|W^{\alpha}f(j)||k^{\alpha+1}(n)\\
\leq\lVert f\rVert_{\alpha,\infty}\sup_{n\in\N_0}e^{-t}\displaystyle\sum_{j=0}^{n}\binom{n+\alpha}{j+\alpha}e^{-t(j+\alpha)}(1-e^{-t})^{n-j}= e^{-t}\lVert f\rVert_{\alpha,\infty},\end{array}\end{displaymath} where we have used again Lemma \ref{WeylSemT}, and the equality $k^{\alpha+1}(n)\binom{n}{j}=k^{\alpha+1}(j)\binom{n+\alpha}{j+\alpha}$. So, we have that $T_1(t)$ is a bounded operator on $\tau_1^{\alpha}$ and $\tau_{\infty}^{\alpha},$ with $$\lVert T_1(t)\rVert_{\alpha,1}\leq 1,\quad \lVert T_1(t)\rVert_{\alpha,\infty}\leq e^{-t}.$$
Using Theorem \ref{propiedades} (i), $D^{\alpha}T_1(D^{\alpha})^{-1}$ is a bounded operator on $\ell^1$ and $\ell^{\infty}$ with $$\lVert D^{\alpha}T_1(t)(D^{\alpha})^{-1}\rVert_1\leq 1,\quad \lVert D^{\alpha}T_1(t)(D^{\alpha})^{-1}\rVert_{\infty}\leq e^{-t}.$$ Applying the Riesz-Thorin theorem we get that $\lVert D^{\alpha}T_1(t)(D^{\alpha})^{-1}\rVert_{p}\leq e^{-t\frac{p-1}{p}},$ and therefore $T_p(t)$ is a bounded operator on $\tau_p^{\alpha}$ with $$\lVert T_p(t)\rVert_{\alpha,p}\leq 1.$$

On the other hand, we will prove that $S_1(t)$ is a bounded operator on $\ell^1$ and $\ell^{\infty}.$ Let $f\in \ell^1,$ then $$
\displaystyle\sum_{n=0}^{\infty}|S_1(t)f(n)| \leq  \displaystyle\sum_{j=0}^{\infty}|f(j)|\sum_{n=0}^{j}\binom{j}{n}e^{-tn}(1-e^{-t})^{j-n}=\lVert f\rVert_1.$$ Now let $f\in \ell^{\infty}$ and $t>0,$ by \eqref{generating} one gets \begin{eqnarray*}
\sup_{n\in\N_0}|S_1(t)f(n)| &\leq & \lVert f\rVert_{\infty}\sup_{n\in\N_0}e^{-tn}\displaystyle\sum_{j=n}^{\infty}\binom{j}{n}(1-e^{-t})^{j-n} \\
&=& \lVert f\rVert_{\infty}\sup_{n\in\N_0}e^{-tn}\displaystyle\sum_{j=0}^{\infty}\binom{j+n}{n}(1-e^{-t})^{j}=e^{t}\lVert f\rVert_{\infty}.\end{eqnarray*} The case $t=0$ is clear. Applying the Riesz-Thorin theorem we get that $\lVert S_1(t)\rVert_{p}\leq e^{t\frac{p-1}{p}},$ and consequently $\lVert S_p(t)\rVert_{p}\leq 1.$

Similarly, for $\alpha>0,$ let $f\in\tau_1^{\alpha},$ then \begin{eqnarray*}
\displaystyle\sum_{n=0}^{\infty}|W^{\alpha}S_1(t)f(n)|k^{\alpha+1}(n)&\leq&\displaystyle\sum_{n=0}^{\infty}e^{-tn}k^{\alpha+1}(n)\displaystyle\sum_{j=n}^{\infty}\binom{j+\alpha}{n+\alpha}(1-e^{-t})^{j-n}|W^{\alpha}f(j)|\\
&=&\displaystyle\sum_{j=0}^{\infty} |W^{\alpha}f(j)| k^{\alpha+1}(j) \sum_{n=0}^{j}e^{-tn}(1-e^{-t})^{j-n}\binom{j}{n}=\lVert f \rVert_{\alpha,1},
\end{eqnarray*}
where we have applied Lemma \ref{WeylSem} and the equality $k^{\alpha+1}(n)\binom{j+\alpha}{n+\alpha}=k^{\alpha+1}(j)\binom{j}{n}.$ Now let $f\in \tau_{\infty}^{\alpha}$ and $t>0,$ then \begin{eqnarray*}
\sup_{n\in\N_0}|W^{\alpha}S_1(t)f(n)|k^{\alpha+1}(n) &=& \sup_{n\in\N_0}e^{-tn}|\displaystyle\sum_{j=n}^{\infty}\binom{j+\alpha}{n+\alpha}(1-e^{-t})^{j-n}|W^{\alpha}f(j)||k^{\alpha+1}(n)\\
&\leq&\lVert f\rVert_{\alpha,\infty}\sup_{n\in\N_0}e^{-tn}\displaystyle\sum_{j=n}^{\infty}\binom{j}{n}(1-e^{-t})^{j-n}= e^t\lVert f\rVert_{\alpha,\infty},\end{eqnarray*} where we have used again Lemma \ref{WeylSem}, and the identities $k^{\alpha+1}(n)\binom{j+\alpha}{n+\alpha}=k^{\alpha+1}(j)\binom{j}{n}$ and \eqref{generating} (the case $t=0$ is easy). So, we have that $S_1(t)$ is a bounded operator on $\tau_1^{\alpha}$ and $\tau_{\infty}^{\alpha}$ with $$\lVert S_1(t)\rVert_{\alpha,1}\leq 1,\quad \lVert S_1(t)\rVert_{\alpha,\infty}\leq e^{t}.$$ By Theorem \ref{propiedades} (i), $D^{\alpha}S_1(D^{\alpha})^{-1}$ is a bounded operator on $\ell^1$ and $\ell^{\infty}$ with $$\lVert D^{\alpha}S_1(t)(D^{\alpha})^{-1}\rVert_1\leq 1,\quad \lVert D^{\alpha}S_1(t)(D^{\alpha})^{-1}\rVert_{\infty}\leq e^{t}.$$ Applying the Riesz-Thorin theorem we get that $\lVert D^{\alpha}S_1(t)(D^{\alpha})^{-1}\rVert_{p}\leq e^{t\frac{p-1}{p}},$ and therefore $S_p(t)$ is a bounded operator on $\tau_p^{\alpha}$ with $$\lVert S_p(t)\rVert_{\alpha,p}\leq 1.$$

The strong continuity follows from the ideas developed in \cite[Section 4, Example 7.1 and 7.2]{Sis}, using properties of the operator $W^{\alpha}$ (\cite[Section 2]{ALMV}). It is a simple check that $A$ and $B$ are the generators.

Finally we prove that $D(A)=D(B)=\tau_p^{\alpha+1}.$ Let $f\in \tau_{p}^{\alpha+1}$ be given, then $f\in \tau_{p}^{\alpha}$ since $\tau_{p}^{\alpha+1}\hookrightarrow\tau_p^{\alpha}.$ Using Proposition \ref{leibniz} we have that \begin{equation}\label{leibniz2}W^{\alpha}((j+1)\Delta f(j))(n)=-(n+\alpha+1)W^{\alpha+1}f(n)+\alpha W^{\alpha}f(n),\quad n\in \N_0,\end{equation} then $(j+1)\Delta f(j)\in \tau_p^{\alpha}$ and therefore $f\in D(B).$ Conversely, if $f\in D(B),$ then $f\in \tau_p^{\alpha}$ and $(j+1)\Delta f(j)\in \tau_p^{\alpha}.$ The identity \eqref{leibniz2} implies that $$(\alpha+2)W^{\alpha+1}f(n)k^{\alpha+2}(n)=-W^{\alpha}((j+1)\Delta f(j))(n)k^{\alpha+1}(n)+\alpha W^{\alpha}f(n)k^{\alpha+1}(n),\quad n\in \N_0,$$ whence $f\in \tau_p^{\alpha+1}.$ The case $D(A)=\tau_p^{\alpha+1}$ is similar.

The equality $S_p(t) \circ D^\alpha= D^{\alpha} \circ S_p(t)$ for $t\ge 0$ is a direct consequence of Lemma \ref{WeylSem} and the definition of the $C_0$-semigroup $(S_p(t))_{t\ge 0}$.
\end{proof}

\begin{remark}\label{holomor}{\rm The above semigroups are not holomorphic. First we see that $(T_{p}(t))_{t\geq 0}$ are not holomorphic: we take $k^0,$ which belongs to $\tau_p^{\alpha}$ for all $\alpha\geq 0$ and $1\leq p\leq \infty.$ For $z\in\C, $ it is easy to see that $$\sup_{n\in\N_0}|W^{\alpha}T_1(z)k^0(n)|k^{\alpha+1}(n)=e^{-(\alpha+1)\Real z}\sup_{n\in\N_0}|1-e^{-z}|^n k^{\alpha+1}(n).$$ In the following we prove that there is not $0<\delta<\frac{\pi}{2}$ such that $|1-e^{-z}|\leq 1$ on $\Sigma_{\delta}:=\{z\in\C\,:\,z\neq 0\text{ and }|\text{Arg}z|<\delta\}.$ Then $(T_1(t))_{t\geq 0}$ is not holomorphic on $\tau_{\infty}^{\alpha},$ and therefore not on $\tau_p^{\alpha}$ for $1\leq p\leq \infty.$ Observe that $$|1-e^{-z}|=\sqrt{1-2e^{-\Real z}\cos(\Imag z)+e^{-2\Real z}}\leq 1$$ if and only if $$2\cos(\Imag z)-e^{-\Real z}\geq 0.$$ For simplicity, we write $z=\rho e^{i\varphi}$ with $0\leq \rho$ and $|\varphi|\leq \delta,$ and we define $$F(\rho,\varphi)=2\cos(\rho \sin(\varphi))-e^{-\rho\cos(\varphi)}.$$ Then for all $0<\delta_0<\delta$ and $\rho_0=\frac{\pi}{2\sin(\delta_0)}$ one gets $$F(\rho_0,\delta_0)=-e^{-\frac{\pi\cos(\delta_0)}{2\sin(\delta_0)}}<0,$$ so $|1-e^{-z}|>1$ in some points of $\Sigma_{\delta}$ for any $0<\delta<\frac{\pi}{2}.$
\\

Secondly, we see that for all $0<\delta<\frac{\pi}{2},$ there exist points $z\in \Sigma_{\delta}$ such that $S_{p}(z)$ is not defined on $\tau_p^{\alpha}.$ It is suffices to take $z=\frac{\pi}{2\sin(\delta_0)}e^{i\delta_0}$ with $0<\delta_0<\delta.$ Then, the sequence $f(n)=\frac{1}{(1-e^{-z})^n},$ $n\in\N_0,$  belongs to $\tau_p^{\alpha}$ since $|1-e^{-z}|>1,$ but it is easy to see that $$S_p(z)f(n)=e^{-z(n+1-\frac{1}{p})}(1-e^{-z})^n\displaystyle\sum_{j=n}^{\infty}\binom{j}{n}=\infty.$$
}
\end{remark}

The proof of the following result it follows easily from Proposition \ref{propiedades}(iv).

\begin{proposition}\label{semigroups} The semigroups $(T_p(t))_{t\geq 0}$ and $(S_{p'}(t))_{t\geq 0}$ are dual operators of each other on $\tau_p^{\alpha}$ and $\tau_{p'}^{\alpha}$ respectively with $\frac{1}{p}+\frac{1}{p'}=1.$
\end{proposition}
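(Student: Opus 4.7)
The plan is to verify the pairing identity $\langle T_p(t)f, g\rangle_\alpha = \langle f, S_{p'}(t)g\rangle_\alpha$ directly, for $f\in\tau_p^\alpha$ and $g\in\tau_{p'}^\alpha$, where $\langle\cdot,\cdot\rangle_\alpha$ is the weighted pairing from Proposition \ref{propiedades}(iv) that identifies $\tau_{p'}^\alpha$ with the dual of $\tau_p^\alpha$.

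First, I would pass the Weyl differences through the semigroups. Lemma \ref{WeylSemT} gives $W^\alpha T_p(t)f(n) = e^{-t/p}e^{-t\alpha}T(t)W^\alpha f(n)$, while Lemma \ref{WeylSem} gives
$W^\alpha S_{p'}(t)g(n) = e^{-t/p}e^{-tn}\sum_{j\ge n}\binom{j+\alpha}{n+\alpha}(1-e^{-t})^{j-n}W^\alpha g(j)$.
Substituting these two formulas into $\langle\cdot,\cdot\rangle_\alpha$ produces two explicit double sums in which $W^\alpha f(j)$ and $W^\alpha g(n)$ appear with weights built from binomial coefficients, exponentials, and $k^{\alpha+1}$.

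Second, I would apply Fubini to swap orders of summation on both sides and match the resulting expressions term by term. The key algebraic tool is the identity $k^{\alpha+1}(n)\binom{n}{j} = k^{\alpha+1}(j)\binom{n+\alpha}{j+\alpha}$, used throughout Sections \ref{Weyl} and \ref{sect5}, which reconciles the weight $(k^{\alpha+1}(n))^2$ of the pairing against the two distinct binomial coefficients on the two sides. Through the isometry $D^\alpha:\tau_p^\alpha\to\ell^p$ of Proposition \ref{propiedades}(i) the duality reduces to the elementary Fubini swap
\[
\sum_{n\ge 0}g(n)\sum_{j=0}^n\binom{n}{j}e^{-tj}(1-e^{-t})^{n-j}f(j) = \sum_{n\ge 0}f(n)e^{-tn}\sum_{j\ge n}\binom{j}{n}(1-e^{-t})^{j-n}g(j),
\]
i.e.\ the classical duality of $T(t)$ and $S(t)$ under the unweighted $\ell^p$--$\ell^{p'}$ pairing, where the scalar $e^{-t/p}$ distributes correctly thanks to $1-1/p'=1/p$.

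The main technical obstacle I foresee is justifying Fubini in the endpoint cases $p=1$ (so $p'=\infty$) and $p=\infty$, where the sums need not converge absolutely. I would treat this by first verifying the identity on the dense subspace $c_{0,0}$, where every sum is finite, and then extending by continuity using the contractivity $\|T_p(t)\|_{\alpha,p}\le 1$ and $\|S_{p'}(t)\|_{\alpha,p'}\le 1$ from the main theorem of Section \ref{sect5}, together with the H\"older-type bound $|\langle f,g\rangle_\alpha|\le\|f\|_{\alpha,p}\|g\|_{\alpha,p'}$ implicit in Proposition \ref{propiedades}(iv).
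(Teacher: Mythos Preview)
Your reduction through $D^\alpha$ does not go through for $\alpha>0$. While $S_{p'}(t)$ commutes with $D^\alpha$ (this is recorded explicitly at the end of the main theorem of Section~\ref{sect5}), the semigroup $T_p(t)$ does \emph{not}: combining Lemma~\ref{WeylSemT} with the identity $k^{\alpha+1}(n)\binom{n}{j}=k^{\alpha+1}(j)\binom{n+\alpha}{j+\alpha}$ yields
\[
D^\alpha T_p(t)(D^\alpha)^{-1}F(n)=e^{-t/p}\,e^{-t\alpha}\sum_{j=0}^n\binom{n+\alpha}{j+\alpha}e^{-tj}(1-e^{-t})^{n-j}F(j),
\]
and for $\alpha>0$ this is not $T_p(t)F(n)$. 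Hence after conjugating by $D^\alpha$ you do not land on the unweighted Fubini swap you display. Worse, the identity $\langle T_p(t)f,g\rangle_\alpha=\langle f,S_{p'}(t)g\rangle_\alpha$ you set out to prove is \emph{false} for $\alpha>0$: take $\alpha=1$ and $f=g=k^0\in c_{0,0}$; a direct computation gives $\langle T_p(t)k^0,k^0\rangle_1=e^{-t(1+1/p)}$ but $\langle k^0,S_{p'}(t)k^0\rangle_1=e^{-t/p}$, and these disagree for every $t>0$. The factor $e^{-t\alpha}$ produced by Lemma~\ref{WeylSemT} has no counterpart on the $S$-side, and the binomial identity you invoke does not cancel it.

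The paper itself offers no proof beyond the remark that the result follows from Theorem~\ref{propiedades}(iv). What does hold---and is presumably the intended content---is the standard unweighted duality $\sum_n T_p(t)f(n)\,g(n)=\sum_n f(n)\,S_{p'}(t)g(n)$, which is exactly your displayed Fubini identity and is valid for $f\in\tau_p^\alpha\subset\ell^p$, $g\in\tau_{p'}^\alpha\subset\ell^{p'}$. This is enough for the only later use of the proposition, namely passing from $\sigma(\mathcal C_\beta^*)$ to $\sigma(\mathcal C_\beta)$ in Theorem~\ref{spectro}. Your density-and-continuity handling of the endpoint cases is fine once the pairing is taken to be the unweighted one.
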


\begin{proposition} Let $A$ and $B$ the generators of $(T_p(t))_{t\geq 0}$ and $(S_{p}(t))_{t\geq 0}$ on $\tau_p^{\alpha}$ with $1\leq p\leq\infty.$ Then \begin{enumerate}
\item[(i)] The point spectrum of $A$ and $B$ are:\begin{itemize}
\item[(a)] $\sigma_{point}(A)=\emptyset$ and $\sigma_{point}(B)=\C_-,$ for $1< p<\infty.$
\item[(b)] $\sigma_{point}(A)=\emptyset$ and $\sigma_{point}(B)=\C_-\cup\{0\},$ for $p=1.$
\item[(c)] $\sigma_{point}(A)=\{0\}$ and $\sigma_{point}(B)=\C_-\cup i\R,$ for $p=\infty.$
\end{itemize}
\item[(ii)] The spectrum of $B$ is $\sigma(B)=\C_-\cup i\R.$
\end{enumerate}
\end{proposition}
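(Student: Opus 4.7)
The plan is to sandwich $\sigma(B)$ between $\overline{\C_-}=\C_-\cup i\R$ from above (using the contractivity of the semigroup) and $\overline{\C_-}$ from below (using the previously computed point spectrum together with the closedness of the spectrum).

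First I would establish the inclusion $\sigma(B)\subseteq\C_-\cup i\R$. The preceding theorem shows that $(S_p(t))_{t\geq 0}$ is a contraction $C_0$-semigroup on $\tau_p^{\alpha}$, so its growth bound $\omega_0$ satisfies $\omega_0\leq 0$. By the standard estimate for generators of $C_0$-semigroups (Hille--Yosida), every $\lambda$ with $\Real\lambda>\omega_0$ belongs to the resolvent set $\rho(B)$, and in fact $R(\lambda,B)f=\int_{0}^{\infty}e^{-\lambda t}S_p(t)f\,dt$ with $\|R(\lambda,B)\|\leq (\Real\lambda)^{-1}$ for $\Real\lambda>0$. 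Consequently $\C_+\subseteq\rho(B)$, which gives $\sigma(B)\subseteq\overline{\C_-}=\C_-\cup i\R$.

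For the reverse inclusion, I would invoke part (i) of the present proposition: for each choice of $p\in[1,\infty]$, one has $\C_-\subseteq\sigma_{\mathrm{point}}(B)\subseteq\sigma(B)$. Since $\sigma(B)$ is a closed subset of $\C$, taking closures yields $\overline{\C_-}=\C_-\cup i\R\subseteq\sigma(B)$. Combining the two inclusions gives $\sigma(B)=\C_-\cup i\R$, as desired.

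There is no real obstacle here once part (i) is in hand: the argument is just the interplay between the contractivity inequality on the resolvent (an a priori bound coming from the exponential formula) and closedness of the spectrum applied to the already-known abundance of eigenvalues. The only minor point worth emphasizing in the write-up is that the argument is uniform in $p\in[1,\infty]$, because in all three cases appearing in (i) the point spectrum already contains $\C_-$, and that is all that is needed to force $\overline{\C_-}$ into $\sigma(B)$.
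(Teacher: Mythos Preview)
Your proposal only addresses part (ii), taking part (i) as given; the paper proves (i) by explicitly solving the eigenvalue equations $Af=\lambda f$ and $Bf=\lambda f$ and checking membership in $\tau_p^{\alpha+1}$ via Lemma~\ref{belong-p}. If your intent was to supply only (ii), that is fine, but be aware the proposition as stated includes (i).

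For part (ii) your argument is correct, and in fact cleaner than the paper's. Both you and the paper obtain the inclusion $\sigma(B)\subseteq\overline{\C_-}$ from contractivity of $(S_p(t))_{t\ge 0}$ (you via the Hille--Yosida resolvent bound, the paper via the spectral inclusion $e^{t\sigma(B)}\subseteq\sigma(S_p(t))$; these are equivalent here). The difference is in the reverse inclusion. The paper, after noting $\sigma_{\mathrm{point}}(B)\subseteq\sigma(B)$, treats the imaginary axis separately for $1\le p<\infty$ by a direct computation: it writes down the general solution of $(\mu-B)g=f$ for $\mu\in i\R$ and argues that the homogeneous solution $k^{\xi-1}$ fails to lie in $\tau_p^\alpha$. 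You instead observe that $\sigma(B)$ is closed (as the spectrum of the closed operator $B$), so $\C_-\subseteq\sigma(B)$ forces $\overline{\C_-}=\C_-\cup i\R\subseteq\sigma(B)$ immediately. This bypasses the paper's boundary computation entirely, works uniformly in $p$, and is the more economical route.
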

\begin{proof}(i) First we consider the operator $A$. For the case (a) and (b), we take $1\leq p<\infty.$ Let $\lambda\in\C$ and $f\in \tau_p^{\alpha+1}$ such that $Af=\lambda f.$ Then the nonzero solutions of this difference equation are $f(n)=c$ with $c\neq 0,$ which do not belong to $\tau_p^{\alpha+1}.$ For the case (c), let $p=\infty.$ If $\lambda\neq 0,$ $Af=\lambda f$ if and only if $f$ is the null sequence. On the other hand, if $\lambda=0,$ the nonzero solutions of $Af=\lambda f$ are $f(n)=c$ with $c\neq 0,$ which belong to $\tau_{\infty}^{\alpha+1},$ see Lemma \ref{belong-p}(i).

Now let $Bf=\lambda f.$ The nonzero solutions of this difference equation are $f(n)=c k^{\lambda+1-\frac{1}{p}}(n)$ with $c\neq 0,$ which are on $\tau_p^{\alpha+1}$ for $\Real\lambda <0$ if $1< p\leq \infty,$ for $\Real\lambda <0$ and $\lambda=0$ for $p=1,$ and for $\Real \lambda \leq 0$ if $p=\infty,$ see Lemma \ref{belong-p}(i).

(ii) We know that $(S_p(t))_{t\geq 0}$ is a contraction $C_0$-semigroup on $\tau_p^{\alpha}$, then by the spectral mapping theorem (\cite[Chapter IV, Theorem 3.6]{Nagel}) we have $$e^{t\sigma(B)}\subseteq \sigma(S_p(t))\subseteq \{z\in \C\,:\, |z|\leq 1\},$$ therefore $\sigma(B)\subseteq \{\Real\lambda \leq 0\}.$ Conversely, note that $$\sigma_{point}(B)\subseteq \sigma(B)\subseteq \{\Real\lambda \leq 0\}.$$ For $p=\infty$ we conclude the result. If $1\leq p<\infty,$ then it suffices to prove that the points $\{\Real\lambda =0\}$ are in $\sigma(B).$ Let $\mu \in i\R$ and we suppose that $\mu\in \rho(A).$ Let $\xi= \mu-\frac{1}{p}+2$ and $f\in \tau_p^{\alpha}.$ Using that $(\mu-B)^{-1}$ is a bounded operator, the sequence $g:=(\mu-B)^{-1}f\in \tau_p^{\alpha}.$ So $g$ is the solution of $g(n+1)=\frac{(\xi+n-1)g(n)-f(n)}{n+1}$ for $n\in\N_0.$ It easy to prove that $g(n+1)=ck^{\xi-1}(n+1)-\displaystyle\sum_{j=0}^n\frac{\Gamma(\xi+n)(n-j)!f(n-j)}{\Gamma(\xi+n-j)(n+1)!}$ where $c$ is a constant. If we take $f(n)=0$ for all $n\in\N_0$ and $c=1,$ then $g=k^{\xi-1}\not\in\tau_p^{\alpha}$ for $1<p<\infty,$ and $g=k^{\xi-1}\not\in\tau_1^{\alpha}$ except for the case $\xi=1$ (this last case is not relevant because for $p=1$ and $\xi=1$ we have $\mu =0,$ and we know that $0\in\sigma_{point(B)}$), see Lemma \ref{belong-p}.
\end{proof}

\begin{remark}{\rm The operator $-B$ is sectorial of angle $\pi/2$ in the sense of \cite[Chapter 2, Section 2.1]{Haase}.
}
\end{remark}

\section{Ces\`aro discrete operators on spaces defined on $\N_0$}
\label{dis}

In this section, we expose the results which contain the main aim of the paper. Let $\Real\beta>0,$ we consider the Ces\`aro operator of order $\beta$ given by $$\mathcal{C}_{\beta}f(n)=\frac{1}{k^{\beta+1}(n)}\sum_{j=0}^nk^{\beta}(n-j)f(j)\quad n\in\N_0,$$ and the dual Ces\`aro operator of order $\beta$ given by $$\mathcal{C}^*_{\beta}f(n)=\sum_{j=n}^{\infty}\frac{1}{k^{\beta+1}(j)}k^{\beta}(j-n)f(j)\quad n\in\N_0.$$

\begin{remark}{\rm \begin{itemize}
\item[(i)]Let $|\lambda|>1.$ We consider the sequence $r_{\lambda}(n)=\lambda^{-(n+1)},$ $n\in\N_0.$ Note that $$\mathcal{C}_{1}r_{\lambda}(n)=\frac{1-r_{\lambda}(n)}{(n+1)(\lambda-1)},\quad \mathcal{C}_{2}r_{\lambda}(n)=\frac{2((n+1)(\lambda-1)-1+r_{\lambda}(n))}{(n+1)(n+2)(\lambda-1)^2},\quad n\in\N_0.$$ On the other hand, $$\mathcal{C}_{1}^2r_{\lambda}(n)=\frac{1}{(n+1)(\lambda-1)}\sum_{j=0}^n \frac{1-r_{\lambda}(j)}{j+1},\quad n\in\N_0.$$ Then $\mathcal{C}_{1}^2 r_{\lambda}\neq\mathcal{C}_{2}r_{\lambda}$ (it suffices to evaluate the above sequence in $n=0$). Then $\mathcal{C}_{1}^2\neq \mathcal{C}_{2}.$
    \\

\item[(ii)] Let $p=1,$ $\Real\beta>0,$ $\alpha\geq 0$ and $f(n)=k^{-\beta}(n+1),$ $n\in\N_0.$ Note that $$\mathcal{C}_{\beta}f(n)=\frac{1}{k^{\beta+1}(n)}\sum_{j=1}^{n+1}k^{\beta}(n+1-j)k^{-\beta}(j)=-\frac{k^{\beta}(n+1)}{k^{\beta+1}(n)}=-\frac{\beta}{n+1},\quad n\in \N_0,$$ where we have used that $k^{\beta}*k^{-\beta}(n+1)=k^0(n+1)=0$ for $n\in\N_0.$ The sequence $f$ belongs to $\tau^{\alpha}_1$ (Lemma \ref{belong-p}(i)) and $\mathcal{C}_{\beta}f\notin \ell^1,$ therefore $\mathcal{C}_{\beta}$ is not a bounded operator on $\tau_1^{\alpha}.$\\

\item[(iii)] Let $p=\infty,$ $\Real\beta>0,$ $\alpha\geq 0$ and $f(n)=1$ for all $n\in\N_0.$ By Lemma \ref{belong-p}(i), the sequence $f\in\tau_{\infty}^{\alpha}.$ The equivalence \eqref{double2} implies that $\mathcal{C}_{\beta}^{*}f(n)$ diverges for each $n\in \N_0.$ So, $\mathcal{C}_{\beta}^{*}$ is not a bounded operator on $\tau_{\infty}^{\alpha}.$

\end{itemize}
}
\end{remark}

\begin{theorem}\label{bounded} Let $\alpha\geq 0$ and $\Real\beta>0.$ Then \begin{itemize}
\item[(i)] The operator $\mathcal{C_{\beta}}$ is a bounded operator on $\tau_p^{\alpha},$ for $1<p\leq\infty.$
\item[(ii)] The operator $\mathcal{C_{\beta}^*}$ is a bounded operator on $\tau_p^{\alpha},$ for $1\leq p <\infty.$
\end{itemize} In addition, for $f\in \tau_p^{\alpha}$ the following subordination identities hold, $$\mathcal{C}_{\beta}f(n)=\displaystyle\beta\int_0^{\infty}(1-e^{-t})^{\beta-1}e^{-t(1-\frac{1}{p})}T_p(t)f(n)\,dt,\quad n\in\N_0,\, 1<p\leq\infty,$$ and $$\mathcal{C}^*_{\beta}f(n)=\beta\int_0^{\infty}(1-e^{-t})^{\beta-1}e^{-\frac{t}{p}}S_p(t)f(n)\,dt,\quad n\in\N_0,\, 1\leq p <\infty.$$
\end{theorem}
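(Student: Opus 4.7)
The plan is to verify the two subordination identities pointwise first, and then read off the norm bounds from the contraction property of the semigroups $(T_p(t))_{t\ge 0}$ and $(S_p(t))_{t\ge 0}$ already established in the previous section. The algebraic engine behind the pointwise identities is the Beta function integral (\ref{beta}).

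For the first identity, the weights collapse: since $T_p(t)f(n)=e^{-t/p}T(t)f(n)$, one has $e^{-t(1-1/p)}T_p(t)f(n)=e^{-t}T(t)f(n)$ and the parameter $p$ disappears from the integrand. Inserting the definition of $T(t)f(n)$, interchanging the finite sum $0\le j\le n$ with the integral, and applying (\ref{beta}) with $u=\beta+n-j$, $v=j+1$ gives
$$
\beta\int_0^\infty(1-e^{-t})^{\beta-1}e^{-t}T(t)f(n)\,dt=\sum_{j=0}^n\binom{n}{j}\frac{\beta\,\Gamma(\beta+n-j)\Gamma(j+1)}{\Gamma(\beta+n+1)}f(j).
$$
A direct simplification of Gamma factors turns the right-hand side into $\sum_{j=0}^n k^\beta(n-j)f(j)/k^{\beta+1}(n)=\mathcal{C}_\beta f(n)$. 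The dual identity is handled the same way: the weights again collapse to $e^{-t}S(t)f(n)$, Fubini interchanges the infinite sum $j\ge n$ with the integral, and (\ref{beta}) with $u=\beta+j-n$, $v=n+1$ produces $\mathcal{C}_\beta^*f(n)$ after simplification.

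With the pointwise formulas in hand, the boundedness follows from the semigroup contraction estimates. Since the previous section shows $\|T_p(t)\|_{\alpha,p}\le 1$ and $\|S_p(t)\|_{\alpha,p}\le 1$ for $1\le p\le\infty$, and the orbits are strongly continuous, the integrals are Bochner integrals in $\tau_p^\alpha$ provided the scalar weight is in $L^1(0,\infty)$. For $\mathcal{C}_\beta$ the weight $(1-e^{-t})^{\Real\beta-1}e^{-t(1-1/p)}$ is integrable exactly when $\Real\beta>0$ and $1-1/p>0$, i.e. $1<p\le\infty$, and (\ref{beta}) yields
$$
\|\mathcal{C}_\beta f\|_{\alpha,p}\le|\beta|\,\mathbf{B}(\Real\beta,1-1/p)\|f\|_{\alpha,p}.
$$
The dual bound $\|\mathcal{C}_\beta^*f\|_{\alpha,p}\le|\beta|\,\mathbf{B}(\Real\beta,1/p)\|f\|_{\alpha,p}$ requires $1/p>0$, which is exactly the hypothesis $1\le p<\infty$, and the endpoint restrictions match the counterexamples in the preceding remark.

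The main technical obstacle is the rigorous application of Fubini in the dual identity, where the inner sum is infinite. The cleanest way is to check absolute integrability of $(t,j)\mapsto(1-e^{-t})^{\Real\beta-1}e^{-t/p}\binom{j}{n}(1-e^{-t})^{j-n}e^{-tn}|f(j)|$ on $(0,\infty)\times\{n,n+1,\ldots\}$: the $t$-integral is the Beta integral $\mathbf{B}(\Real\beta+j-n,n+1/p)$, and the resulting scalar series converges for $f\in\tau_p^\alpha\subset\ell^p$ by the asymptotics (\ref{double2}) together with the fact that this Beta value decays polynomially in $j$. Extension from real $\beta>0$ to complex $\beta$ with $\Real\beta>0$ is routine by analytic continuation on both sides.
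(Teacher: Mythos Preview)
Your proof is correct and follows essentially the same route as the paper: establish the subordination identities pointwise via the Beta integral (\ref{beta}) after noting that the $p$-dependent weights collapse to $e^{-t}T(t)f(n)$ and $e^{-t}S(t)f(n)$, then deduce the norm bounds from the contractivity of $(T_p(t))$ and $(S_p(t))$. Your extra care with Fubini in the dual identity is not spelled out in the paper; note a small slip there---after the collapse the exponential factor is $e^{-t(n+1)}$, so the second Beta parameter should be $n+1$ rather than $n+1/p$, though this does not affect the convergence argument.
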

\begin{proof} Let $\alpha\geq 0,$ $\Real\beta>0$ and $f\in \tau_p^{\alpha}.$ Then \begin{eqnarray*}
\mathcal{C}_{\beta}f(n)&=&\frac{1}{k^{\beta+1}(n)}\displaystyle\sum_{j=0}^{n}k^{\beta}(n-j)f(j)=\beta\sum_{j=0}^{n}\binom{n}{j}\frac{\Gamma(j+1)\Gamma(n+\beta-j)}{\Gamma(n+\beta+1)}f(j) \\
&=&\displaystyle\beta\sum_{j=0}^{n}\binom{n}{j}f(j)\int_0^1(1-x)^{n+\beta-j-1}x^{j}\,dx\\
&=&\beta\sum_{j=0}^{n}\binom{n}{j}f(j)\int_0^{\infty}(1-e^{-t})^{n+\beta-j-1}e^{-t(j+1)}\,dt \\
&=&\displaystyle\beta\int_0^{\infty}(1-e^{-t})^{\beta-1}e^{-t(1-\frac{1}{p})}T_p(t)f(n)\,dt,
\end{eqnarray*}
and \begin{eqnarray*}
\mathcal{C}_{\beta}^*f(n)&=&\displaystyle\sum_{j=n}^{\infty}\frac{1}{k^{\beta+1}(j)}k^{\beta}(j-n)f(j)=\beta\sum_{j=n}^{\infty}\binom{j}{n}\frac{\Gamma(n+1)\Gamma(j+\beta-n)}{\Gamma(j+\beta+1)}f(j) \\
&=&\displaystyle\beta\sum_{j=n}^{\infty}\binom{j}{n}f(j)\int_0^1(1-x)^{j+\beta-n-1}x^{n}\,dx\\
&=&\beta\sum_{j=n}^{\infty}\binom{j}{n}f(j)\int_0^{\infty}(1-e^{-t})^{j+\beta-n-1}e^{-t(n+1)}\,dt \\
&=&\displaystyle\beta\int_0^{\infty}(1-e^{-t})^{\beta-1}e^{-\frac{t}{p}}S_p(t)f(n)\,dt.
\end{eqnarray*}
Using these identities and the contractivity of the semigroups we get \begin{eqnarray*}
\lVert \mathcal{C}_{\beta}f\rVert_{\alpha,p}&\leq&|\beta|\displaystyle\lVert f\rVert_{\alpha,p}\int_0^{\infty}(1-e^{-t})^{\Real \beta-1}e^{-t(1-\frac{1}{p})}\,dt\\
&=&|\beta|\frac{\Gamma(\Real\beta)\Gamma(1-\frac{1}{p})}{\Gamma(\Real \beta+1-\frac{1}{p})}\lVert f\rVert_{\alpha,p},\quad 1<p\leq\infty,
\end{eqnarray*}
and
\begin{eqnarray*}
\lVert \mathcal{C}_{\beta}^* f\rVert_{\alpha,p}&\leq&|\beta|\displaystyle\lVert f\rVert_{\alpha,p}\int_0^{\infty}(1-e^{-t})^{\Real\beta-1}e^{-\frac{t}{p}}\,dt\\
&=&|\beta|\frac{\Gamma(\Real\beta)\Gamma(\frac{1}{p})}{\Gamma(\Real \beta+\frac{1}{p})}\lVert f\rVert_{\alpha,p},\quad 1\leq p<\infty,
\end{eqnarray*}
and we conclude the proof
\end{proof}

\begin{remark}\label{holo}{\rm Let $H^p$ be the usual Hardy space on the unit disc $\D$ for $1\le p\le\infty$. The generalized Ces\`{a}ro operators (or $\beta$-Ces\`{a}ro operators) ${\frak C}_\beta$ and ${\frak C}_\beta^*$ are defined by
\begin{eqnarray*}
{\frak C}_\beta(F)(z)&:=&{\beta\over z^{\beta}}\int_0^z F(w){(z-w)^{\beta-1}\over (1-w)^{\beta}}dw, \qquad z \in \D,\cr
{\frak C}_\beta^*(F)(z)&:=&{\beta\over (z-1)^{\beta}}\int_1^z F(w){(z-w)^{\beta-1}}dw, \qquad z \in \D.\cr
\end{eqnarray*}
 Then ${\frak C}_\beta$ is a bounded operator for $1\le p<\infty$ and  ${\frak C}_\beta^*$ is a bounded operator for $1< p\le \infty,$ see \cite{Xiao} and \cite{Stempak}. Moreover by \cite[Formula (3.3) and Formula (2.1)]{Xiao}, we have that
\begin{eqnarray*}
{\frak C}_\beta(F)(z)&=&{\beta}\int_0^\infty {e^{-t} \over 1-(1-e^{-t})z} F(\psi_t(z))(1-e^{-t})^{\beta-1}dt, \qquad z \in \D,\cr
{\frak C}_\beta^*(F)(z)&:=&{\beta}\int_0^\infty e^{-t} F(\phi_t(z)){(1-e^{-t})^{\beta-1}}dt, \qquad z \in \D,\cr
\end{eqnarray*}
where the composition semigroups $(\psi_t)_{t>0}$ and $(\phi_t)_{t>0}$ are considered in Remark \ref{compos}. In the case that $F=\tilde f$ (for suitable $f$) we get
 \begin{eqnarray*}
{\frak C}_\beta(F)(z)&=&{\beta}\int_0^\infty {e^{-t}} \widetilde{T(t)f}(z)(1-e^{-t})^{\beta-1}dt=\widetilde{{\mathcal C}_\beta(f)}(z), \qquad z \in \D,\cr
{\frak C}_\beta^*(F)(z)&:=&{\beta}\int_0^\infty e^{-t} \widetilde{S(t)f}(z){(1-e^{-t})^{\beta-1}}dt=\widetilde{{\mathcal C}_\beta^*(f)}(z), \qquad z \in \D.\cr
\end{eqnarray*}
}
\end{remark}

The following result states the duality between the generalized Ces\`aro operators. The proof is a simple check and we omit it.

\begin{proposition} Let $\alpha\geq 0$ and $\Real\beta>0.$ The Ces\`aro operators $\mathcal{C_{\beta}}$ and $\mathcal{C_{\beta}^*}$ are dual operators of each other $\tau_p^{\alpha}$ and $\tau_{p^{'}}^{\alpha}$ respectively with $\frac{1}{p}+\frac{1}{p^{'}}=1$ and $1<p\leq\infty.$
\end{proposition}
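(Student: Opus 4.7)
The plan is to derive this duality directly from the subordination formulas in Theorem~\ref{bounded} combined with the semigroup duality of Proposition~\ref{semigroups}. Fix $1<p\leq\infty$ and set $1/p+1/p'=1$, so that $1\leq p'<\infty$ and the crucial identity $1-1/p=1/p'$ makes the exponential weights in the two subordination formulas coincide. The strategy is to apply the pairing $\langle\,\cdot\,,g\rangle_\alpha$ to the integral representation of $\mathcal{C}_\beta f$, move the functional inside, invoke $\langle T_p(t)f,g\rangle_\alpha=\langle f,S_{p'}(t)g\rangle_\alpha$, and recognize the resulting integral as $\mathcal{C}_\beta^* g$.

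First, for $f\in\tau_p^\alpha$ and $g\in\tau_{p'}^\alpha$, I would record the two Bochner representations supplied by Theorem~\ref{bounded}:
\begin{equation*}
\mathcal{C}_\beta f=\beta\int_0^\infty (1-e^{-t})^{\beta-1}e^{-t/p'}\,T_p(t)f\,dt,\qquad
\mathcal{C}_\beta^* g=\beta\int_0^\infty (1-e^{-t})^{\beta-1}e^{-t/p'}\,S_{p'}(t)g\,dt,
\end{equation*}
noting that the second formula is legitimately applied on $\tau_{p'}^\alpha$ because $1\leq p'<\infty$. Since both semigroups are contractions on the respective spaces (cf.\ the norm estimates in Section~\ref{sect5}) and the scalar weight has finite integral $\mathbf{B}(\mathrm{Re}\,\beta,1/p')$ by \eqref{beta}, each integrand is Bochner-integrable in its space, and the continuous linear functionals $\langle\,\cdot\,,g\rangle_\alpha$ on $\tau_p^\alpha$ and $\langle f,\,\cdot\,\rangle_\alpha$ on $\tau_{p'}^\alpha$ (continuous by Theorem~\ref{propiedades}(iv)) may be commuted with the integral sign.

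The core calculation is then a one-liner:
\begin{align*}
\langle \mathcal{C}_\beta f,g\rangle_\alpha
&=\beta\int_0^\infty (1-e^{-t})^{\beta-1}e^{-t/p'}\,\langle T_p(t)f,g\rangle_\alpha\,dt \\
&=\beta\int_0^\infty (1-e^{-t})^{\beta-1}e^{-t/p'}\,\langle f,S_{p'}(t)g\rangle_\alpha\,dt
=\langle f,\mathcal{C}_\beta^* g\rangle_\alpha,
\end{align*}
where the central equality is the semigroup duality of Proposition~\ref{semigroups}. Absolute convergence---and hence the legality of swapping the pairing with the integral via Fubini---follows from the Hölder-type estimate $|\langle T_p(t)f,g\rangle_\alpha|\leq\|f\|_{\alpha,p}\|g\|_{\alpha,p'}$ (itself a transfer of Hölder along the isometry $D^\alpha$ of Theorem~\ref{propiedades}(i)) together with the integrability of $(1-e^{-t})^{\mathrm{Re}\,\beta-1}e^{-t/p'}$.

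The only mildly delicate point is the endpoint case $p=\infty$, $p'=1$: one must remember that, by Definition~\ref{taualphasp}, $\tau_\infty^\alpha$ consists of sequences vanishing at infinity, so $D^\alpha$ identifies it isometrically with $c_0$ and the pairing with $\tau_1^\alpha\cong\ell^1$ remains a genuine duality; with this in hand the Bochner-integral manipulations above go through verbatim. Beyond that observation there is nothing to verify, which explains the authors' remark that the proof is a simple check.
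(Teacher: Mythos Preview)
Your argument is correct: the subordination formulas of Theorem~\ref{bounded} together with the semigroup duality of Proposition~\ref{semigroups} reduce the identity $\langle \mathcal{C}_\beta f,g\rangle_\alpha=\langle f,\mathcal{C}_\beta^* g\rangle_\alpha$ to the scalar integrability of $(1-e^{-t})^{\Real\beta-1}e^{-t/p'}$, and your justification of the interchange via contractivity and H\"older along $D^\alpha$ is sound. The paper omits the proof entirely, merely calling it a ``simple check''; your route via the semigroups is certainly one natural reading of that phrase, and another is the direct Fubini computation on the defining double sum (which is immediate for $\alpha=0$ and transfers to $\alpha>0$ through $D^\alpha$ once one knows how $\mathcal{C}_\beta$, $\mathcal{C}_\beta^*$ intertwine with it---information that is implicit in Lemmata~\ref{WeylSem} and~\ref{WeylSemT}).

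One small caveat: your endpoint remark that $D^\alpha$ identifies $\tau_\infty^\alpha$ with $c_0$ is not something the paper asserts. Definition~\ref{taualphasp} requires only that $f$ itself vanish at infinity and that $\|f\|_{\alpha,\infty}<\infty$, while Theorem~\ref{propiedades}(i) states $D^\alpha:\tau_\infty^\alpha\to\ell^\infty$ is an isometry (and part~(iv) gives $(\tau_1^\alpha)'\cong\tau_\infty^\alpha$, not the reverse). Fortunately your argument does not actually need the $c_0$ identification: the pairing $\langle\cdot,\cdot\rangle_\alpha$ between $\tau_\infty^\alpha$ and $\tau_1^\alpha$ is well defined and bounded regardless, and that is all the Fubini step uses.
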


\begin{remark}\label{remark5.4}{\rm Let $f\in \tau_p^{\alpha}$  and $\beta=1.$ Then \begin{eqnarray*}
\mathcal{C}_{1}f(n)&=&\displaystyle\int_0^{\infty}e^{-t(1-{1\over p})}T_p(t)f(n)\,dt=(1-{1\over p}-A)^{-1}f(n), \quad 1<p\le \infty,\cr
\mathcal{C}_{1}^{*}f(n)&=&\displaystyle\int_0^{\infty}e^{-t/p}S_p(t)f(n)\,dt=({1\over p}-B)^{-1}f(n),\quad 1\le p< \infty \end{eqnarray*} for $n\in\N_0.$ By the spectral resolvent theorem for the resolvent operator (\cite[Chapter IV, Theorem 1.13]{Nagel}), one gets $$\sigma(\mathcal{C}_{1}^{*})=\left\{z\in \C\,:\, \arrowvert z-p/2\arrowvert\leq p/2 \right\}.$$
A  novel proof of  the equality $\sigma(\mathcal{C}_{1})=\left\{z\in \C\,:\, \arrowvert z-p'/2\arrowvert\leq p'/2 \right\}$ where the
Ces\`{a}ro operator $\mathcal{C}_{1}$ is working on $\ell^p,$ for $1<p<\infty,$ is given in \cite{curbera}. Also, in \cite{Bonet}, the authors study the spectrum of $\mathcal{C}_{1}$ on weighted $\ell^p$ spaces. Some of these results are particular cases of the following  theorem.
}
\end{remark}

\begin{theorem} \label{spectro} Let $\alpha\geq 0$ and $\Real\beta>0.$ Then \begin{itemize}
\item[(i)] The operator $\mathcal{C_{\beta}}:\tau_p^{\alpha}\to \tau_p^{\alpha}$ satisfies $\sigma(\mathcal{C_{\beta}})=\overline{\biggl\{ \frac{\Gamma(\beta+1)\Gamma(z+1-\frac{1}{p})}{\Gamma(\beta+z+1-\frac{1}{p})}\,:\, z\in\C_+\cup i\R \biggr\}},$ for $1<p\leq\infty.$
\item[(ii)] The operator $\mathcal{C_{\beta}}^*:\tau_p^{\alpha}\to \tau_p^{\alpha}$ satisfies $\sigma(\mathcal{C_{\beta}}^*)=\overline{\biggl\{ \frac{\Gamma(\beta+1)\Gamma(z+\frac{1}{p})}{\Gamma(\beta+z+\frac{1}{p})}\,:\, z\in\C_+\cup i\R \biggr\}},$ for $1\leq p <\infty.$
\end{itemize}
\end{theorem}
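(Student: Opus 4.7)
The plan is to realise both generalised Ces\`aro operators as functions of the semigroup generators via the subordination formulas of Theorem~\ref{bounded}, and then invoke the spectral mapping theorem from the extended Dunford-Riesz functional calculus for sectorial operators developed in \cite[Chapter~2]{Haase}. A direct application of identity~\eqref{beta} shows that the Laplace transforms of the subordination kernels appearing in Theorem~\ref{bounded} are
$$
\psi_p(z):=\frac{\Gamma(\beta+1)\Gamma(z+1-1/p)}{\Gamma(\beta+z+1-1/p)},\qquad \varphi_p(z):=\frac{\Gamma(\beta+1)\Gamma(z+1/p)}{\Gamma(\beta+z+1/p)},
$$
and testing against the candidate eigenfunctions of $A$ and $B$ from the preceding proposition suggests the rigorous identifications $\mathcal{C}_\beta=\psi_p(-A)$ and $\mathcal{C}_\beta^{*}=\varphi_p(-B)$. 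Both $\psi_p$ and $\varphi_p$ are meromorphic with all poles on a negative real half-line, holomorphic on a sector strictly containing $\overline{\C_+}$, and, by \eqref{double3}, decay like $\Gamma(\beta+1)z^{-\beta}$ as $|z|\to\infty$ there, so they lie in the admissible class for the calculus.

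The second ingredient is the full spectra of the generators. The preceding proposition already gives $\sigma(B)=\C_-\cup i\R$, hence $\sigma(-B)=\overline{\C_+}$. For $-A$, contractivity of $(T_p(t))_{t\ge 0}$ forces $\sigma(-A)\subset\overline{\C_+}$, and in the reflexive range $1<p<\infty$ Proposition~\ref{semigroups}, together with the identity $\sigma(C)=\sigma(C^{*})$ for closed densely defined operators on reflexive spaces, yields $\sigma(-A)=\overline{\C_+}$. The non-reflexive endpoint cases ($p=\infty$ for $\mathcal{C}_\beta$, $p=1$ for $\mathcal{C}_\beta^{*}$) would be handled directly in the spirit of the proof of the preceding proposition: one solves the first-order difference equations $(\lambda\pm A)g=0$ or $(\lambda\pm B)g=0$ explicitly and uses Lemma~\ref{belong-p} to show that the candidate resolvents fail to belong to $\tau_p^{\alpha}$ when $\lambda$ lies on the imaginary axis.

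Since $-A$ and $-B$ are sectorial of angle $\pi/2$ (see the Remark preceding the theorem) and $\psi_p,\varphi_p$ are admissible, the spectral mapping theorem of \cite[Chapter~2]{Haase} then produces
$$
\sigma(\mathcal{C}_\beta^{*})=\overline{\varphi_p\bigl(\sigma(-B)\bigr)}=\overline{\left\{\frac{\Gamma(\beta+1)\Gamma(z+1/p)}{\Gamma(\beta+z+1/p)}:z\in\overline{\C_+}\right\}},
$$
and analogously for $\sigma(\mathcal{C}_\beta)$ using $\psi_p$; the outer closure absorbs the limit value $\varphi_p(\infty)=0$ coming from the decay of the symbol. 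This proves both (i) and (ii). For $1<p<\infty$, (i) can alternatively be deduced from (ii) by the duality between $\mathcal{C}_\beta$ on $\tau_p^{\alpha}$ and $\mathcal{C}_\beta^{*}$ on $\tau_{p'}^{\alpha}$, replacing $1/p$ by $1/p'=1-1/p$.

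The main obstacle I expect is the justification of the functional calculus itself rather than the spectral mapping: by Remark~\ref{holomor} the semigroups $(T_p(t))$ and $(S_p(t))$ are not holomorphic, so the classical holomorphic-semigroup spectral mapping theorem is unavailable, and one must verify with care that $\psi_p,\varphi_p$ belong to the extended Dunford-Riesz class of the sectorial operators $-A,-B$ of angle $\pi/2$ and that the spectral mapping theorem of \cite[Chapter~2]{Haase} extends up to the imaginary boundary of $\overline{\C_+}$. A secondary technical point is the non-reflexive endpoint cases $p\in\{1,\infty\}$, where the duality shortcut fails and the full spectrum of the corresponding generator must be produced by the direct resolvent analysis sketched above.
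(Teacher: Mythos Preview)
Your plan is essentially the paper's own argument: realise $\mathcal{C}_\beta^*$ as $g_{\beta,p}(-B)$ via the Hille--Phillips/subordination identity, verify that $g_{\beta,p}$ lies in the extended Dunford--Riesz class on a sector strictly larger than $\pi/2$, and apply Haase's spectral mapping theorem \cite[Theorem~2.7.8]{Haase}. Two points are worth tightening. First, Haase's Theorem~2.7.8 requires the sectorial operator to be injective; the paper records this as $0\notin\sigma_{\mathrm{point}}(B)$, and you should make the analogous check explicit. Second, your direct treatment of part~(i) through $-A$ is more work than necessary: you need $\sigma(-A)=\overline{\C_+}$, which the paper never establishes, and your duality argument for this only covers $1<p<\infty$, leaving $p=\infty$ to an ad hoc resolvent calculation. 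The paper instead proves only (ii), then obtains (i) for \emph{all} $1<p\le\infty$ in one stroke from the duality $\sigma(\mathcal{C}_\beta)=\sigma(\mathcal{C}_\beta^{*})$ between $\tau_p^\alpha$ and $\tau_{p'}^\alpha$ (note $1/p'=1-1/p$, so the two formulas match). This is cleaner and avoids any separate spectral analysis of $A$.
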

\begin{proof} (ii) Let $1\leq p <\infty.$ The family $(S_p(t))_{t\geq 0}$ is an uniformly bounded $C_0$-semigroup on $\tau_p^{\alpha}$ generated by $(B,D(B)).$ Then we consider the Hille-Phillips functional calculus $\mathcal{L}(\cdot)(-B):L^1(\R_+)\to\mathcal{B}(X)$ given by $$\mathcal{L}(h)(-B)f=\int_0^{\infty}h(t)S_p(t)f\,dt,\quad h\in L^1(\R_+),\,f\in \tau_p^{\alpha}.$$ By Theorem \ref{bounded}, for $1\leq p <\infty,$ we have that $\mathcal{C}_{\beta}^*=\mathcal{L}(h_{\beta,p})(-B),$ that is, $$\mathcal{C}_{\beta}^*f=\beta\int_0^{\infty}(1-e^{-t})^{\beta-1}e^{-\frac{t}{p}}S_p(t)f\,dt=\int_0^{\infty}h_{\beta,p}(t)S_p(t)f\,dt,$$ with $h_{\beta,p}(t):=\beta(1-e^{-t})^{\beta-1}e^{-\frac{t}{p}}$ for $t>0.$  Observe that $h_{\beta,p}\in L^1(\R_+),$ $$\mathcal{L}(h_{\beta,p})(z)=\frac{\Gamma(\beta+1)\Gamma(z+\frac{1}{p})}{\Gamma(\beta+z+\frac{1}{p})}:=g_{\beta,p}(z),\quad  z\in\overline{\C_+},\,1\leq p <\infty,$$ where $\mathcal{L}$ denotes the Laplace transform, and $\mathcal{L}(h_{\beta,p})\in C_0(\R_+)\cap\mathcal{H}_0(\C_+).$

Observe that we can extend in an holomorphic way the function $g_{\beta,p}$ to $\Sigma_{\varphi_0}:=\{z\in\C\,:\,z\neq 0\text{ and }|\text{Arg}z|<\varphi_0\},$ where $\varphi_0\in (\frac{\pi}{2},\pi-\varphi_1)$ with $\varphi_1:=\arctan(\frac{\Imag \beta}{\Real\beta})$ (see figure below): since $\Gamma$ is a meromorphic function with poles in $\{0,-1,-2,\cdots\},$ we could have problems of holomorphy in points $z\in\C_-\cap \Sigma_{\varphi_0}$ such that $\Real (\beta+z+1/p)\in\{0,-1,-2,\cdots\}$ and $\Imag(\beta+z+1/p)=\Imag(\beta+z)=0.$ It is easy to see that the above does not take place.

\vspace*{1.0cm}

\begin{center}

\begin{tikzpicture}
    \fill[gray!10,rounded corners]  (-2,3) -- (0,0) -- (4,0) -- (4,3) -- cycle;
    \fill[gray!10,rounded corners]  (-2,-3) -- (0,0) -- (4,0) -- (4,-3) -- cycle;
    \draw[dashed,gray](-4,0)--(4,0);
    \draw[dashed,gray](0,-3)--(0,3);

    \path (0,0)++(80:1cm)node{$\varphi_0$};
    \draw[->](0.75,0)arc(0:124:.75cm);

    \draw[gray](0:0cm)--(140:4cm);
    \path (0,0)++(165:2.25cm) node{$\pi-\varphi_1$};
    \draw[->] (-1.5,0)arc(180:140:1.5cm);

    \draw[gray](0:0cm)--(40:2.5cm);
    \path (0,0)++(15:1.75cm) node{$\varphi_1$};
    \draw[->] (1.5,0)arc(0:40:1.5cm);

   \fill[black] (1.9,1.6) circle (1.3pt);
   \draw[thick](2.0,1.7)node[right]{$\overline{\beta}$};

   \fill[black] (-0.6,1.6) circle (1.3pt);
   \draw[thick](-0.7,1.8)node[right]{$z$};

    \draw[black](-0.6,1.6)--(1.9,1.6);

    \draw[thick](2.8,2.5)node[right]{$\Sigma_{\varphi_0}$};
    %
\end{tikzpicture}

\end{center}

\vspace*{1.0cm}

In addition, note that $g_{\beta,p}(0)=\frac{\Gamma(\beta+1)\Gamma(\frac{1}{p})}{\Gamma(\beta+\frac{1}{p})}$ and $g_{\beta,p}$ is holomorphic in a neighborhood of $0,$ then $g_{\beta,p}$ has finite polynomial limit at $0.$ Also, by \eqref{double3},$\displaystyle\lim_{|z|\to\infty}g_{\beta,p}(z)=0$ with $z\in S_{\varphi_0},$ so $g_{\beta,p}$ has finite polynomial limit at $\infty.$ Using \cite[Lemma 2.2.3]{Haase} we have that $g_{\beta,p}\in\mathcal{E}_{\varphi_0},$ where $\mathcal{E}_{\varphi_0}$ denotes the extended Dunford-Riesz class.  Therefore, since $-B$ is a sectorial operator of angle $\frac{\pi}{2}$ and $B$ is injective ($0\notin \sigma_{p}(B)$) we can apply the spectral mapping theorem \cite[Theorem 2.7.8]{Haase} and we get $$\sigma(\mathcal{C}_{\beta}^*)=\sigma(g_{\beta,p}(-B))=\overline{g_{\beta,p}(\sigma(-B))}=\overline{\biggl\{ \frac{\Gamma(\beta+1)\Gamma(z+\frac{1}{p})}{\Gamma(\beta+z+\frac{1}{p})}\,:\, z\in\C_+\cup i\R \biggr\}}.$$
(i) The proof is consequence of the part (ii) and the duality between $\mathcal{C_{\beta}}$ and $\mathcal{C_{\beta}}^*$.
\end{proof}

\begin{remark}\label{remark5.6}{\rm Observe that for $p=2,$ we have $\sigma(\mathcal{C_{\beta}})=\sigma(\mathcal{C_{\beta}^*})$ for all $\beta>0.$ If $1<p\leq\infty,$ then $\sigma(\mathcal{C_{\beta}})$ (where $\mathcal{C_{\beta}}$ is considered on $\tau_p^{\alpha}$) is equal to $\sigma(\mathcal{C_{\beta}^*}),$ with $\mathcal{C_{\beta}^*}$ on $\tau_{p'}^{\alpha},$ with $\frac{1}{p}+\frac{1}{p'}=1.$
}
\end{remark}

\section{Spectral pictures, Final comments}

The main aim of this last section is that the reader visualizes the spectrum of the Ces\`aro operators on some particular cases. We will use Mathematica in order to draw the desired sets.

First of all, it is interesting to observe that the following pictures show the border in the values $\{z=it\ :\ t\in\R\}$ of the spectral sets, since it provides a better view of the behaviour of such sets. Note that this subset of the spectrum may not be the mathematical boundary as shown in the Figures. For each $1<p\leq \infty$ and $\beta>0,$ we consider the border of the spectrum of $\mathcal{C}_{\beta}$ on $\tau_p^{\alpha}$ as the curve $$\Gamma_{p,\beta}:=\overline{\biggl\{ \frac{\Gamma(\beta+1)\Gamma(it+1-\frac{1}{p})}{\Gamma(\beta+it+1-\frac{1}{p})}\,:\, t\in\R \biggr\}}.$$ Note that this curve degenerates in the point $1$ as $\beta\to 0^+.$ In \cite{LMPL}, the authors study the spectrum of the generalized Ces\`aro operators for the continuous case on Sobolev spaces. Precisely, their spectrums coincide with the curves $\Gamma_{p,\beta}$ in the discrete case.\\

As we mention in Remark \ref{remark5.4}, for $1\leq p<\infty,$ the spectrum of $\mathcal{C}_{1}^{*}$ on $\tau_p^{\alpha}$ is the closed ball centered in $p/2$ and radius $p/2.$ Therefore, we have that $$\sigma(\mathcal{C}_{1})=\left\{z\in \C\,:\, \arrowvert z-\frac{p}{2(p-1)}\arrowvert\leq \frac{p}{2(p-1)} \right\}$$ on $\tau_p^{\alpha}$ for $1<p\leq \infty,$ see Remark \ref{remark5.6}. This comment can be observed in the Figure 1 and Figure 3.\\

First, we will show cases where $\beta>0.$ For each $1<p\leq \infty$ and $\beta>0,$ the curve $\Gamma_{p,\beta}$ takes the point $\frac{\Gamma(\beta+1)\Gamma(1-1/p)}{\Gamma(\beta+1-1/p)}$ on the complex plane (doing $t=0$). In addition, when $p\neq \infty,$ the point $\frac{\Gamma(\beta+1)\Gamma(1-1/p)}{\Gamma(\beta+1-1/p)}\to\infty$ as $\beta\to \infty,$ see \eqref{double3}. This last comment is appreciated very well in the Figure 2. On the other hand, by \eqref{double3} $\frac{\Gamma(\beta+1)\Gamma(it+1-\frac{1}{p})}{\Gamma(\beta+it+1-\frac{1}{p})}\to 0$ as $t\to\pm\infty,$ for all $1<p\leq \infty$ and $\beta>0.$\\

The Figure 1 shows the curve $\Gamma_{2,\beta}$ for $\beta=0.5,1,2,3,5.$ It seems that for $\beta<1,$ $\Gamma_{2,\beta}$ is a curve contained in the circle centered in $1$ and radius $1.$ Such curve only cuts the real axis on $0$ and $\frac{\Gamma(\beta+1)\sqrt{\pi}}{\Gamma(\beta+1/2)}.$ We conjecture that the last statement is true for $0<\beta\leq 1$ and $1<p\leq \infty.$ On the other hand, for $\beta>1,$ $\Gamma_{2,\beta}$ has points on $\C_-.$ In fact, the curve cuts the imaginary axis at least in two points. Also, for $\beta=3$ and $5,$ the curve $\Gamma_{2,\beta}$ cuts the real negative axis. For $\beta=2,$ it can not be appreciated the last statement.

\begin{figure}[h]
\caption{}
\centering
\includegraphics[width=0.6\textwidth]{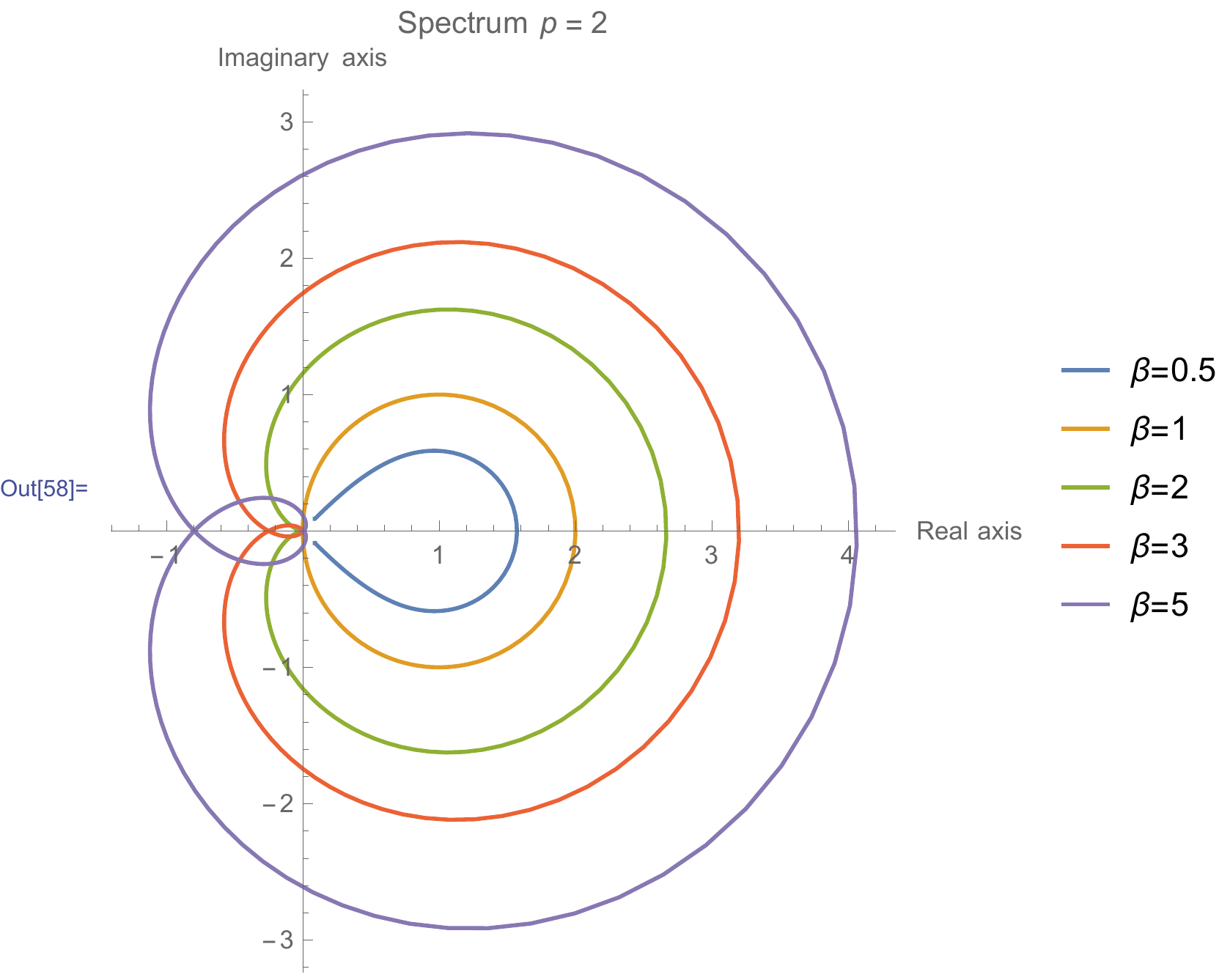}
\end{figure}


The Figure 2 shows the curve $\Gamma_{1.5,\beta}$ for $\beta=100$ and $\beta=200.$ For these values, it cuts several times the real and imaginary axes. It would be interesting to know such points, or at least, how many times each axis is cut.

\begin{figure}[h]
\caption{}
\centering
\includegraphics[width=0.6\textwidth]{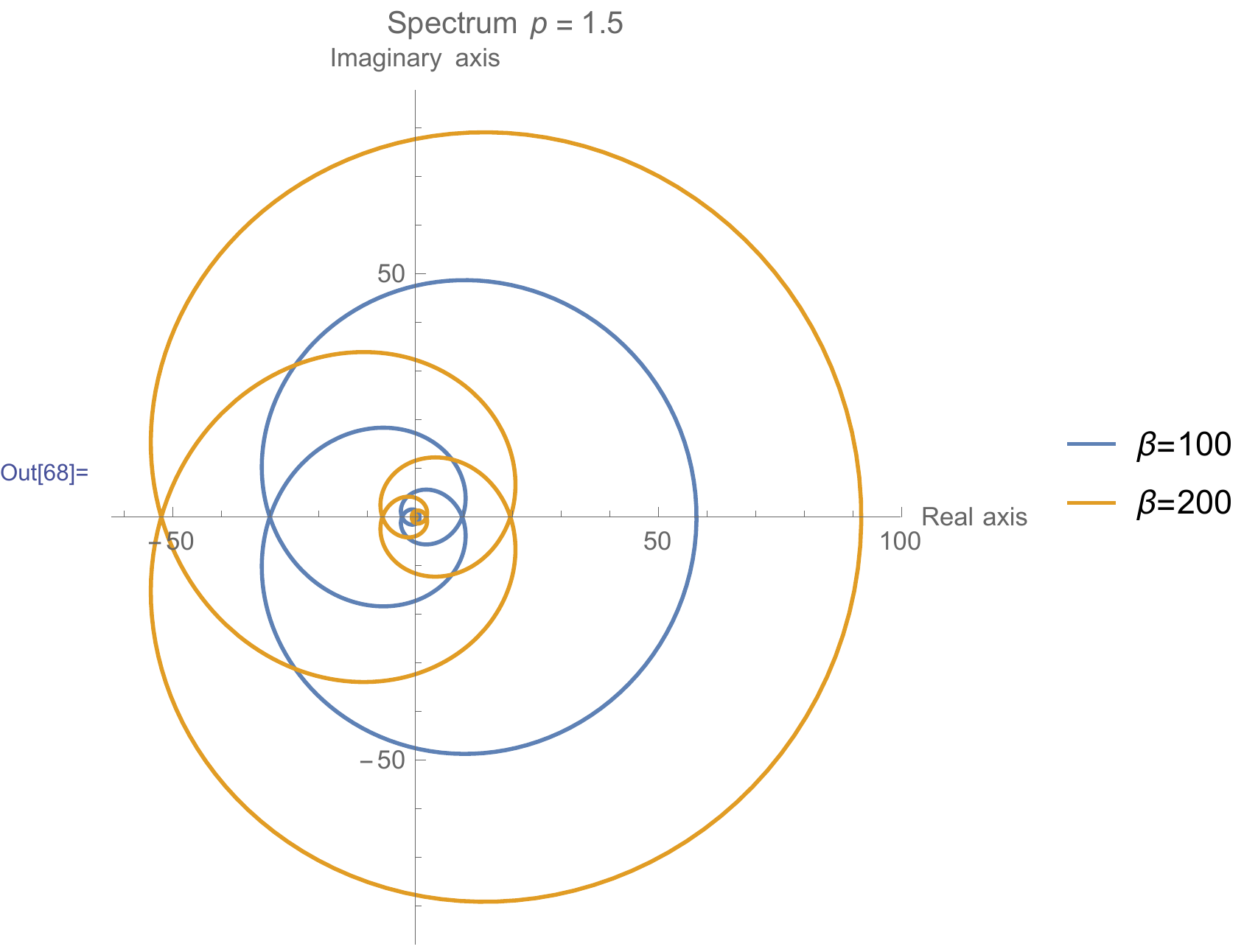}
\end{figure}

\vskip 2.0cm

In the Figure 3, we consider the particular case $p=\infty,$ with $\beta=1,10,100,1000,10000.$ First, observe that for all $\beta>0,$ $\Gamma_{\infty,\beta}$ cuts to the real axis in the point $1$ ($t=0$). In addition, this case seems to have a very special particularity; the envelope to the family of curves $\{\Gamma_{p,\beta}\}_{\beta>0}$ is the unit circle.


\begin{figure}[h]
\caption{}
\centering
\includegraphics[width=0.6\textwidth]{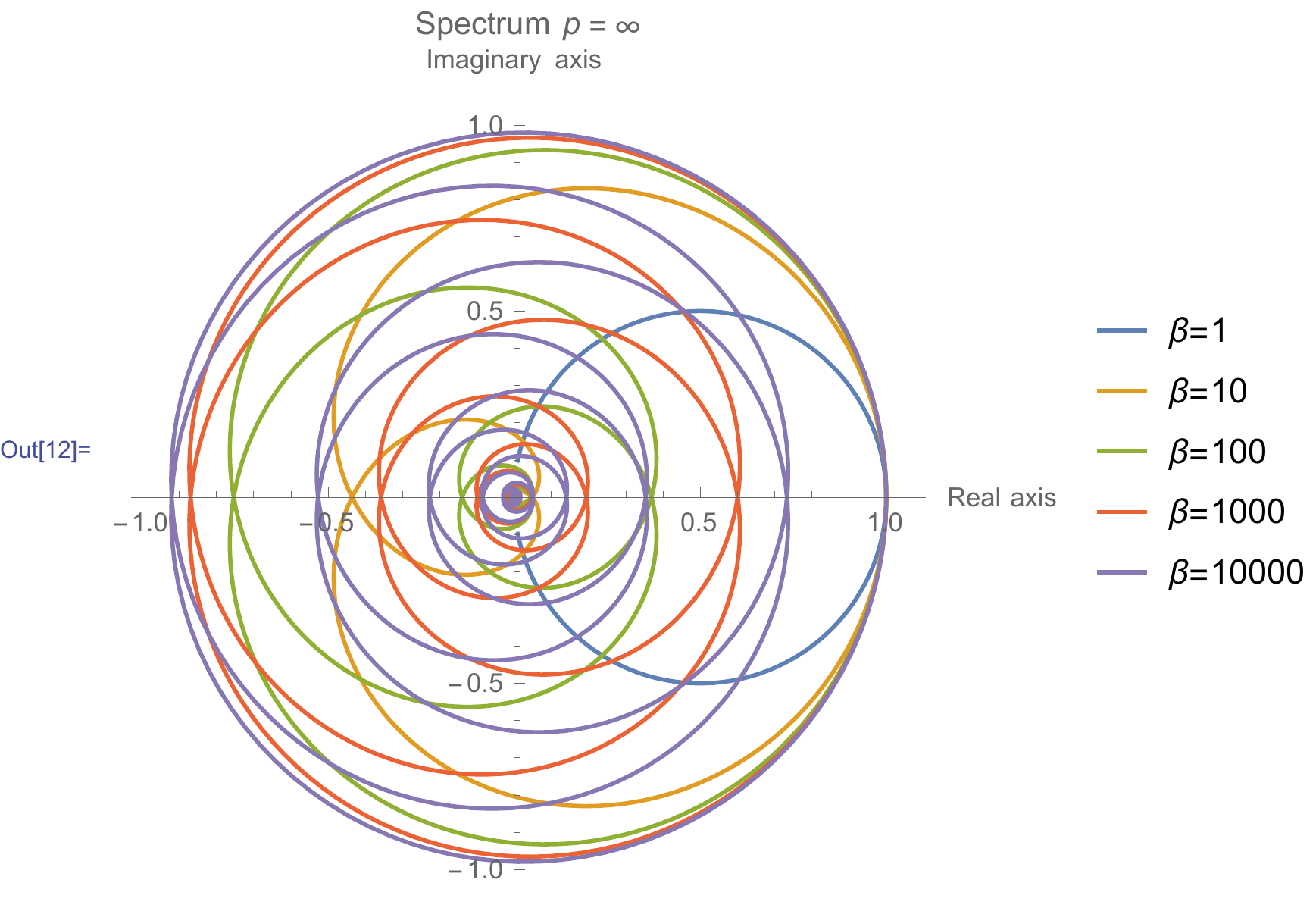}
\end{figure}

So, we establish the following open questions:\begin{itemize}
\item[(i)] Let $1<p<\infty$ and $0<\beta\leq 1.$ The curves $\Gamma_{p,\beta}$ do not cut the imaginary axis, and only cut the real axis in $0$ and $\frac{\Gamma(\beta+1)\Gamma(1-1/p)}{\Gamma(\beta+1-1/p)}.$

\item[(ii)] Let $1<p<\infty$ and $\beta>1.$ Study how many times the curves $\Gamma_{p,\beta}$ cut the real and imaginary axes and where they do it.

\item[(iii)] For $p=\infty,$ study if the envelope of the family of curves $\{\Gamma_{p,\beta}\}_{\beta>0}$ is the unit circle.

\end{itemize}

Now, we show some pictures about the cases with $\Real\beta>0$ and $\Imag\beta\neq 0.$ In these cases, the behaviour of $\Gamma_{p,\beta}$ is more difficult to predict. However, we will observe particularities and nice curves.

The Figure 4 shows that $\Gamma_{2,1-i}$ and $\Gamma_{2,1-i}$ are symmetrical with respect to the real axis. In fact, it is easy to see that $\Gamma_{p,\beta}$ and $\Gamma_{p,\overline\beta}$ are symmetrical.

\begin{figure}[h]
\caption{}
\centering
\includegraphics[width=0.6\textwidth]{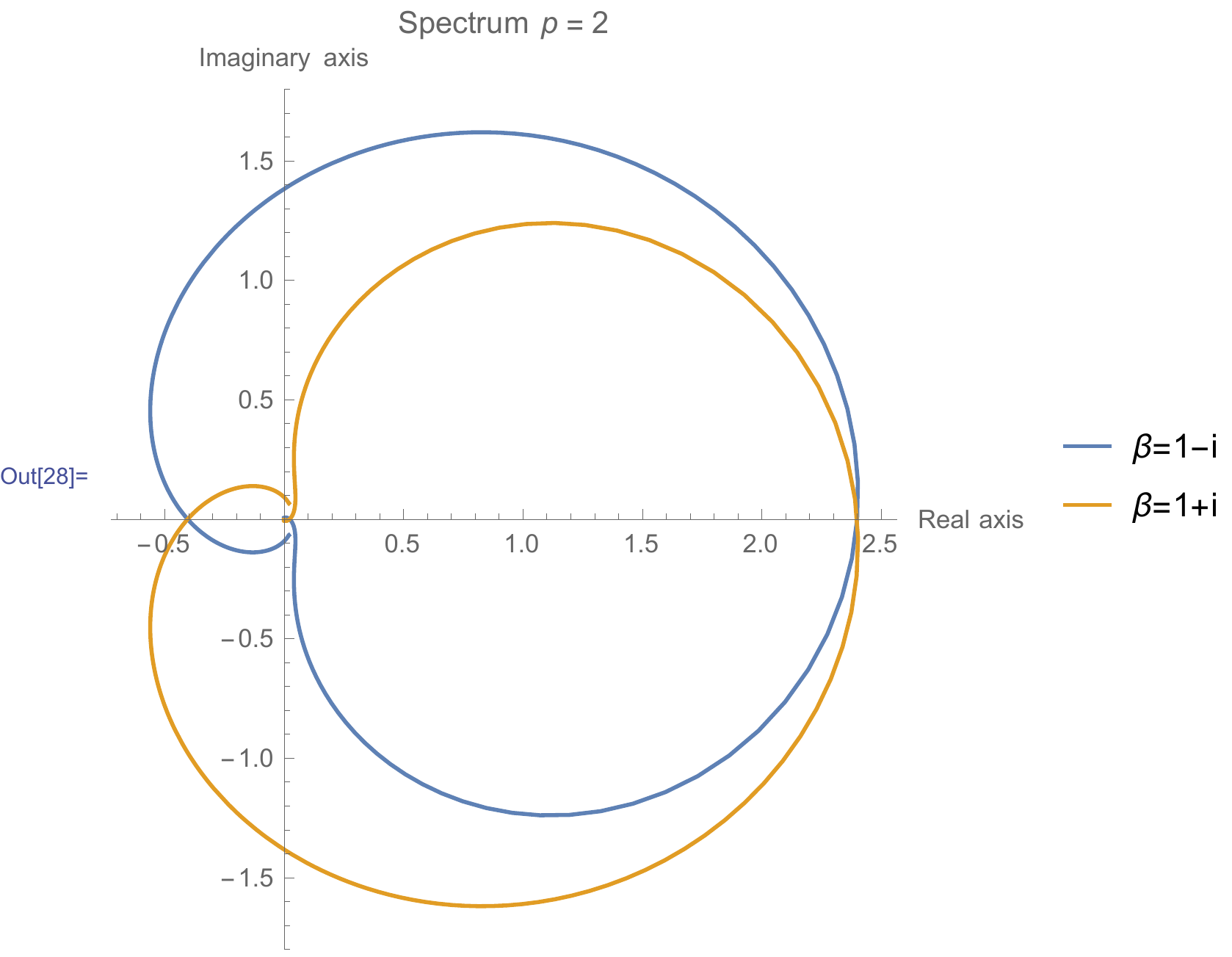}
\end{figure}

The Figure 5 shows that $\Gamma_{1.5,100+i}$ and $\Gamma_{1.5,100+20i}$ are very close to $\Gamma_{1.5,100}$ (compare to Figure 3), and $\Gamma_{1.5,100+100i}$ is considerably different.

\begin{figure}[h]
\caption{}
\centering
\includegraphics[width=0.6\textwidth]{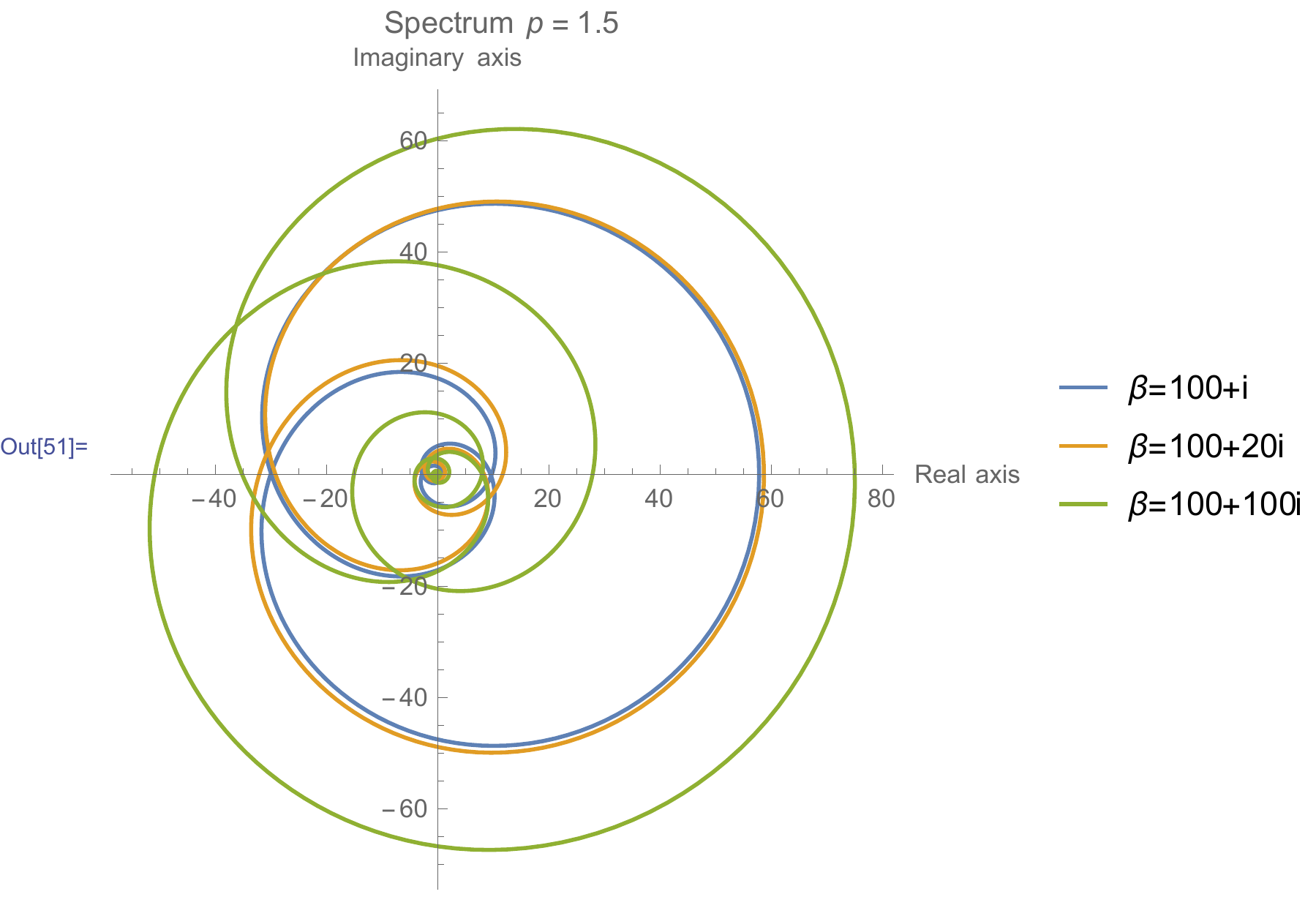}
\end{figure}

The Figure 6 represents a case where the gap between the real and imaginary part of $\beta$ is large. This allows us to observe that we are getting spirals centered at the origin.

\begin{figure}[h]
\caption{}
\centering
\includegraphics[width=0.6\textwidth]{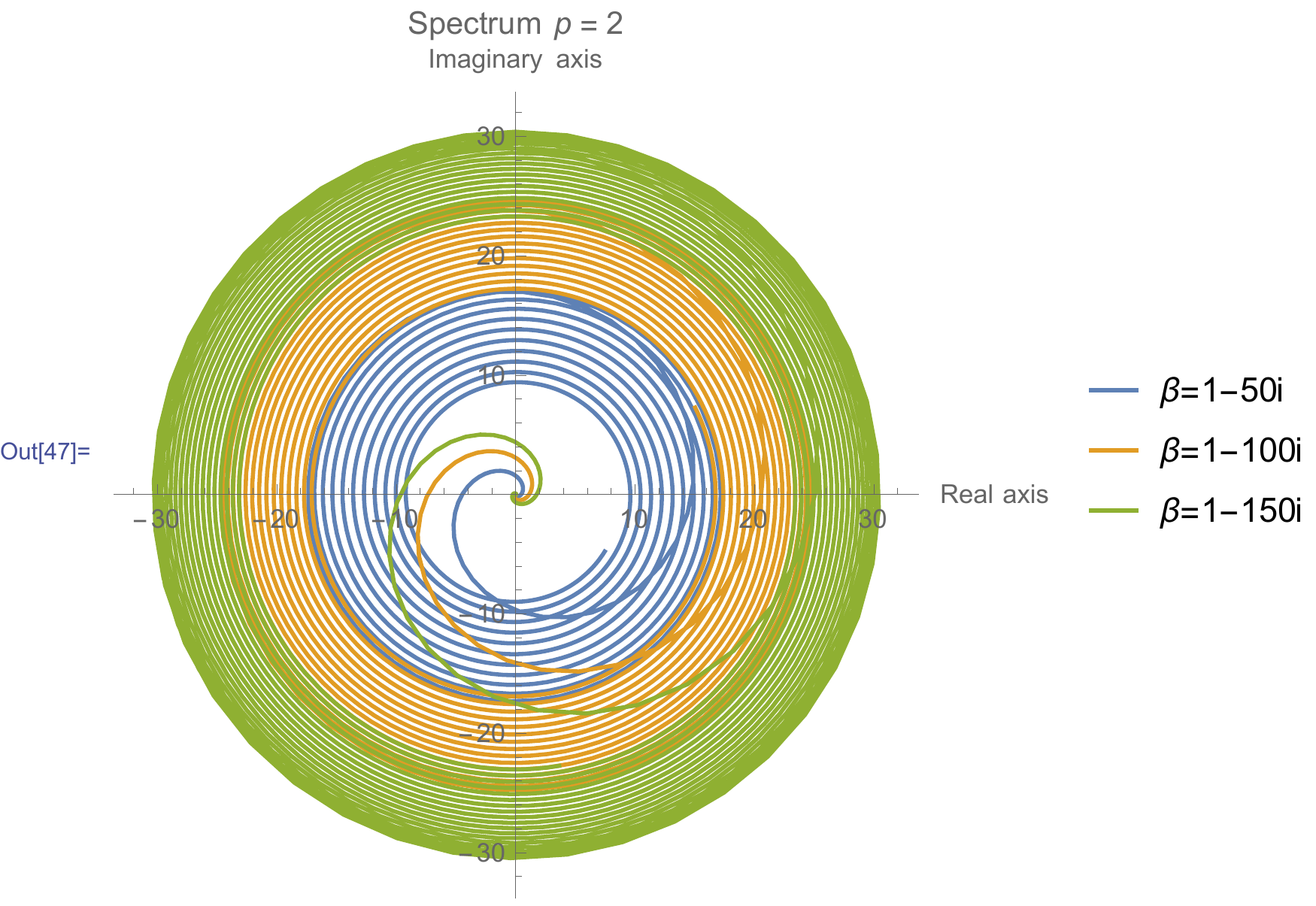}
\end{figure}

\vskip 5.0cm

For $p=\infty $ and large values of $|\beta|,$ the Figure 7 shows how similar variations on the growing of the real and imaginary parts of $\beta$ allow to control the curves $\Gamma_{\infty,\beta}$ in similar regions of the complex plane. However, the same variations only on the imaginary part imply large variations on the region where the curves are.

\begin{figure}[h]
\caption{}
\centering
\includegraphics[width=0.6\textwidth]{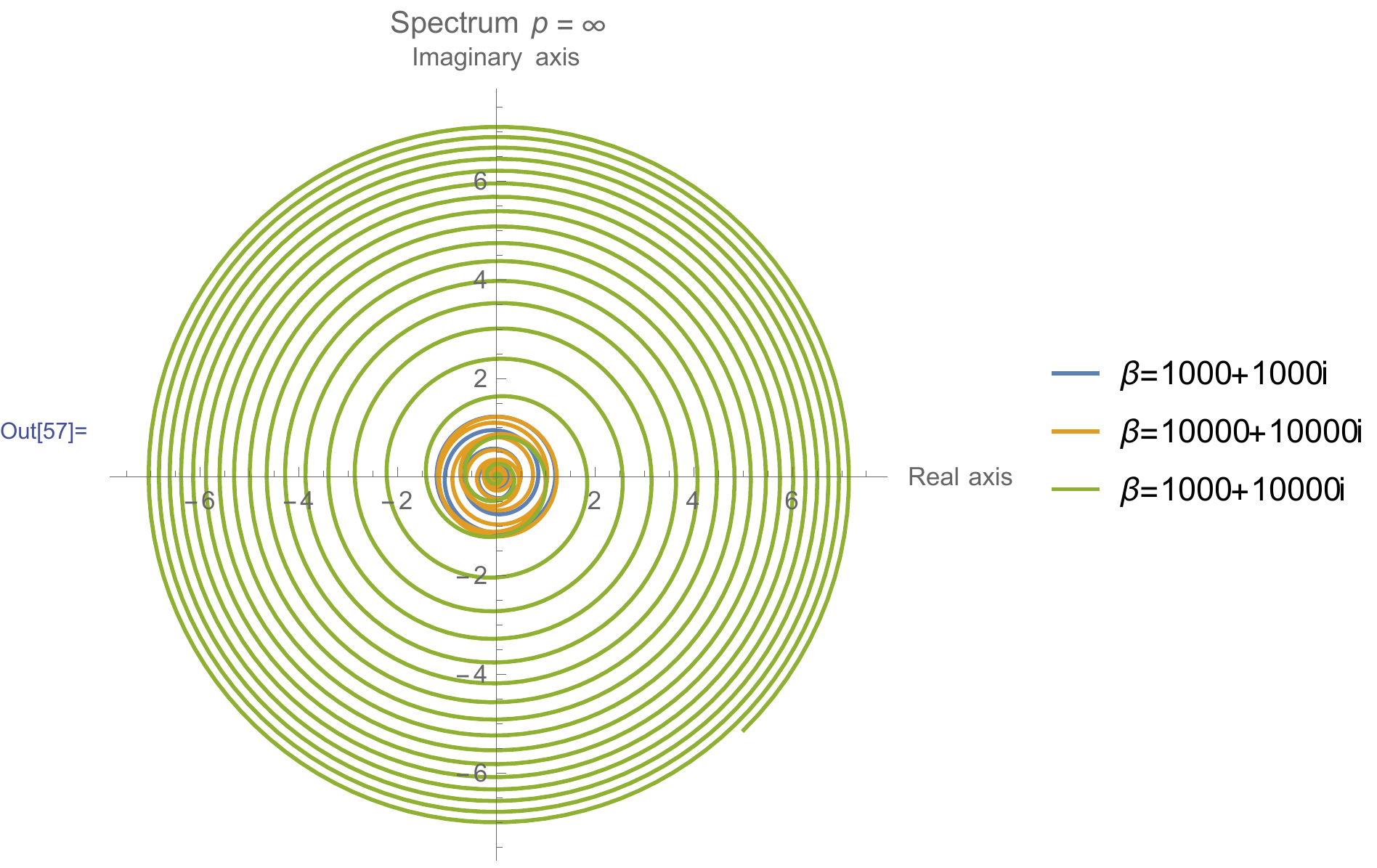}
\end{figure}

\vskip 1.0cm
\subsection*{\it Acknowledgments.} The authors wish to thank J.E. Gal\'{e}, L. Roncal, J. L. Varona and M. P. Velasco for the pieces of advice
and assistance provided in order to obtain some of the above results.

\end{document}